\documentclass[10pt]{article}
\input epsf.tex
\usepackage[T1]{fontenc}
\usepackage{ae}
\usepackage{amssymb,amsmath,amsthm,textcomp,amsxtra}
\usepackage[mathcal]{euscript}
\usepackage{graphicx,pst-plot,mparhack}
\usepackage[abs]{overpic}
\setlength{\unitlength}{1mm}
\usepackage[matrix,arrow]{xy}
  \xymatrixrowsep{0.3pc}
  \xymatrixcolsep{0.3pc}
\usepackage{enumerate,makeidx}

\renewcommand*{\geq}{\geqslant}
\renewcommand*{\leq}{\leqslant}


\pagestyle{headings}
\makeindex

%
\addtolength{\textheight}{1.9cm}
\addtolength{\jot}{0.6ex}

\numberwithin{equation}{section}

\newcommand*{\abs}[1]{\lvert#1\rvert}

\newcommand*{\Sscr}{\mathcal S}

\newcommand*{\N}{\mathbb{N}}
\newcommand*{\R}{\mathbb{R}}

\renewcommand*{\d}{\mathrm{d}}

\DeclareMathOperator{\var}{var}

\usepackage{amsfonts,amsmath,latexsym,amsthm,amssymb}
\usepackage{dsfont} 
\usepackage{mathtools}
\usepackage{enumerate}
\usepackage{mathrsfs} 
\mathtoolsset{showonlyrefs}
\usepackage{listings} 
\usepackage{amscd}
\addtolength{\topmargin}{-1cm}

\DeclareMathOperator{\const}{const}

\def\wt{\widetilde}
\def\pas{\ \d P \mbox{-a.s.}}
\def\<{\left<}
\def\>{\right>}
\def\({\left(}
\def\){\right)}
\def\vp{\varphi}
\def\o{\omega}
\def\O{\Omega}
\def\9{\infty}
\def\R{\mathbb R}
\def\N{\mathbb N}

\def\sha{{\cal A}}
\def\shb{{\cal B}}
\def\shc{{\cal C}}

\def\she{{\cal E}}
\def\shf{{\cal F}}
\def\shg{{\cal G}}
\def\shh{{\cal H}}
\def\shm{{\cal M}}

\def\shs{{\cal S}}
\def\shy{{\cal Y}}
\def\shw{{\cal W}}

\textheight20.5cm
\textwidth12.7cm
\oddsidemargin1cm
\parindent0mm
\parskip1ex

\newcommand{\norm}[1]{\left\| #1 \right\|}

\newtheorem{theo}{Theorem}[section]
\newtheorem{lemma}[theo]{Lemma}
\newtheorem{Assumption}[theo]{Assumption}

\newtheorem{prop}[theo]{Proposition}
\newtheorem{rem}[theo]{Remark}
\newtheorem{cor}[theo]{Corollary}
\newtheorem{defi}[theo]{Definition}

\newcommand{\beqnar}{\begin{eqnarray*}}
\newcommand{\eeqnar}{\end{eqnarray*}}
\newcommand{\ba}{\begin{array}}
\newcommand{\ea}{\end{array}}

\newcommand{\halb}{\frac{1}{2}}

\newenvironment{prooff}[1]{\begin{trivlist}
\item {\it \bf Proof}\quad} 
{\qed\end{trivlist}}

\begin{document}

\title{
Doubly probabilistic representation for the
stochastic  porous media type equation.}

\author{ Viorel Barbu (1), Michael R\"ockner (2)
and Francesco Russo (3) } 

\date{August 3rd  2016}
\maketitle

\thispagestyle{myheadings}
\markright{Stochastic porous media with multiplicative noise.}

{\bf Summary:} The purpose of the present paper consists in proposing 
and discussing a doubly probabilistic representation
for a stochastic porous media equation in the whole space $\R^1$
perturbed by a multiplicative colored noise.
For almost all random realizations $\omega$, one associates
a stochastic differential equation in law with random coefficients,
driven by an independent Brownian motion.

{\bf Key words}: stochastic partial differential equations;
infinite volume; singular  porous media type equation;
doubly probabilistic representation; multiplicative noise;
singular random Fokker-Planck type equation; filtering.

{\bf2000  AMS-classification}: 35R60; 60H15; 60H30; 60H10; 60G46;
 35C99; 58J65; 82C31.



\begin{itemize}
\item[(1)] Viorel Barbu,
University Al.I. Cuza, Ro--6600 Iasi,
Romania.
\item[(2)] Michael R\"ockner,
Fakult\"at f\"ur Mathematik, 
Universit\"at   Bielefeld, 
\\ D--33615 Bielefeld, Germany
\item[(3)] Francesco Russo,
ENSTA ParisTech, Universit\'e Paris-Saclay,
Unit\'e de Math\'ematiques appliqu\'ees,
 828, boulevard des Mar\'echaux,
F-91120 Palaiseau, France.

\end{itemize}

\vfill \eject

\section{Introduction}

\setcounter{equation}{0}


We consider a function $\psi: \R \rightarrow \R$ and
real functions $e^0, \ldots, e^N$ on $\R$, for some strictly positive integer 
$N$. In the whole paper, the following assumption will be in force.
\begin{Assumption}\label{E1.0}
\begin{itemize}
\item 
$\vert \psi (u) \vert \le  {\rm const} \vert u \vert, \ u \ge 0.  $ 
In particular,  
$\psi(0) = 0$.
\item  $\psi: \R \rightarrow \R$ is a continuous function such that
its restriction to ${\R_+}$
is monotone increasing.
Moreover we also suppose that
$\lim_{u \rightarrow 0} \frac{\psi(u)}{u}$ exists.
\item Let  $e^i \in  C^2_{\rm b}(\R), 0 \le i \le N$.
\end{itemize}
\end{Assumption}
 Let $ T > 0$ and $(\Omega, \shf, P)$,
be a fixed  probability space.
A generic element of $\Omega$ will be denoted by $\omega$.
 $(\shf_t, t \in [0,T])$ will stand for a filtration,
 fulfilling the usual conditions and 
we suppose $\shf = \shf_T$.
Let $\mu(t,\xi), t \in [0,T], \xi \in \R,$ be a random field
of the type
$$ \mu(t,\xi) = \sum_{i=1}^N e^i (\xi) W^i_t + e^0(\xi) t,
 \ t \in [0,T], 
\xi \in \R,$$
where $W^i, 1 \le i \le N,$ are independent continuous
 $(\shf_t)$-Brownian motions   on $(\Omega, \shf, P)$, 
 which are fixed from now on until the end of the paper.

For technical reasons we will sometimes set
$W^0_t \equiv t$.
We focus on a stochastic partial differential equation 
of the following type:
\begin{equation}
\label{PME}
\left \{
\begin{array}{ccl}
\partial_t X(t,\xi)&=& \frac{1}{2} \partial_{\xi \xi}^2(\psi(X(t, \xi) ) +
 X(t,\xi) \partial_t \mu(t, \xi),
\\
X(0,\d \xi)& = & x_0(\d \xi), 
\end{array}
\right.
\end{equation}
which holds in the sense of Definition \ref{DSPDE}, where $x_0$ is a
a given probability measure on $\R$.
The stochastic multiplication above is of It\^o type.
We look for a solution of (\ref{PME}) with time evolution in $L^1(\R)$.
Since $\psi$ restricted to $\R_+ $ is non-negative,
 Assumption \ref{E1.0} implies
   $ \psi (u) = \Phi^2(u) u, \ u  \ge 0$, $\Phi: \R_+ \rightarrow \R$
   being a
non-negative continuous function which is  bounded on $\R_+$. 
\begin{rem}\label{Rint} 
\begin{enumerate}
\item In the sequel we will consider, without further comments
extensions of $\psi$ (and $\Phi$) to the real line which fulfill
the first two items of Assumption \ref{E1.0} for $u \in \R$ instead of 
$u \ge 0$. 
\item The restriction on $u \mapsto \Phi(u)$ introduced in Assumption \ref{E1.0}
 to be continuous is not 
always necessary, but here
we assume this for simplicity.
\end{enumerate} 
\end{rem}

When $\psi (u) = \vert u \vert^{m-1}u$,
 $m  >   1$, (\ref{PME}) and $\mu \equiv 0$, 
\eqref{PME}
 is  nothing else but the
classical {\it porous media equation}.
When $\psi $ is a general increasing function
(and $\mu \equiv 0$),
there are several contributions to 
the analytical study of (\ref{PME}),
 starting from
\cite{BeBrC75} for existence,   \cite{BrC79} for  uniqueness in the
case of bounded solutions 
 and \cite{BeC81} 
for continuous dependence on the coefficients.
Those are the classical references when the space variable varies
on the real line. For equations in a bounded domain and Dirichlet
boundary conditions, for simplicity, we only refer to monographs,
e.g. \cite{vazquez, Show, barbu93, barbu10}.

As far as the stochastic porous media is concerned,
most of the work for existence and uniqueness
concerned the case of bounded domain, see for instance
\cite{BDR09, BDR09CMP, BDR08}.
 In the infinite volume case, i.e. when the underlying domain is $\R^d$, 
well-posedness  was fully analyzed in  \cite{Ren}, when
 $\psi$ is polynomially bounded
 (including the fast diffusion case) 
when the space dimension is $d \ge 3$.
\cite{BRR3} established existence and uniqueness for any dimension
$d \ge 1$ and the authors obtained estimates for finite time extinction.
To the best of  our knowledge, except for  \cite{Ren} 
and   \cite{BRR3}, 
 this seems to be  the only work concerning a stochastic porous type equation
in infinite volume.

  We provide a  probabilistic representation of solutions to \eqref{PME}
extending the results of \cite{BRR1, BRR2} which treated the 
deterministic case $\mu \equiv 0$.
In the deterministic case,
it seems that the first author who considered a
probabilistic representation (of the type studied in this paper) for the
solutions of a non-linear deterministic PDE was McKean
\cite{mckean}, particularly in relation with the so called propagation of
chaos. In his case, however, the coefficients were smooth. From then on
the literature steadily grew and nowadays there is a vast amount of
contributions to the subject, see the reference 
list of \cite{BRR1, BRR2}.
A probabilistic representation
 when  $\psi(u) = \vert u \vert  u^{m-1}, m  > 1,$ was provided for
instance in \cite{BCRV}, in
 the case of the classical 
porous media equation. When $m < 1$, i.e. in the case of the fast
diffusion equation, \cite{BR} provides a probabilistic representation of the
so called {\bf Barenblatt solution}, i.e. the solution whose initial condition 
is concentrated at zero.
 
\cite{BRR1, BRR2} discussed the probabilistic representation
when $\mu = 0$  in the so called non-degenerate and degenerate case respectively (see Definition  
\ref{DNond}),
  where $\psi$ also may have jumps. 

In the case $\mu =  0$, the equation \eqref{PME}
models a non-linear phenomenon macroscopically.
Let us denote by $u:[0,T] \times \R \rightarrow \R$ the
solution of that equation.
The idea of the probabilistic representation is
to find a process $(Y_t, t \in [0,T])$ whose law 
at time $t$ has $u(t,\cdot)$ for its density.
In this case the equation \eqref{PME} is 
conservative, in the sense that the integral (mass)
of the solution is conserved along the time.

The process $Y$ turns out to be the weak solution of the 
non-linear stochastic differential equation
\begin{equation}
\label{E1.2}
\left \{
\begin{array}{ccc}
Y_t &=& Y_0 + \int_0^t \Phi(u(s,Y_s)) dB_s,  \\
{\rm Law } (Y_t) &=& u(t,\cdot), \quad t \ge 0, \\
\end{array}
\right.
\end{equation}
where $B$ is a classical Brownian motion.
The behavior of $Y$ is the microscopic counterpart of
the phenomenon described by (\ref{PME}), describing
the evolution of  a single particle, whose law 
behaves according to  (\ref{PME}).




The idea of this paper is to consider the case when $\mu \neq 0$.
 This includes the case when 
 $\mu$ is not vanishing but it is deterministic; it happens
when only $e^0$ is non-zero, and $e^i \equiv 0, 1 \le i \le n$.
 In this case our technique gives a sort of forward
Feynman-Kac formula for a non-linear PDE.
One of the main interests of this paper is that it provides a 
(forward) probabilistic representation for {\it non conservative} 
(random) PDE.

We introduce a doubly stochastic representation 
on which one can represent the solution of \eqref{PME} 
as the weighted-law with respect to the random field $\mu$
(or simply the $\mu$-weighted law) of
a solution to a non-linear SDE.

Intuitively, it describes the microscopic aspect of the SPDE \eqref{PME}
for almost all quenched $\omega$. 
The terminology strongly refers to the case where the 
probability space $(\Omega, \shf, P)$ on which the SPDE
is defined,  remains fixed.

We represent a solution $X$ to  \eqref{PME} making use of
another independent source of randomness described by another
probability space based on some set $\Omega_1$. 

The analog of the process $Y$, obtained 
when $\mu$ is zero in \cite{BRR2, BRR1},
is a doubly stochastic process, still denoted
by $Y$ defined on $(\Omega_1 \times \Omega, Q)$,
for which, $X$ constitutes the so-called
family of {\it  $\mu$-marginal weighted laws of $Y$}, see Definition 
\ref{mylau}. 
$Y$ is the solution of a {\it doubly stochastic non-linear 
diffusion} problem, see Definition \ref{DDoubleStoch}.
It will be  a (doubly) stochastic process 
$(\omega_1,\omega) \mapsto Y(\omega_1,\omega)$ solution
of 
\begin{equation}
\label{E1.2bis}
Y_t =  Y_0 + \int_0^t \Phi(X(s,Y_s,\omega)) dB_s,
\end{equation}
and $B(\cdot,\omega)$ is a Brownian motion on $\Omega_1$ for 
almost any fixed $ \omega \in \Omega$.
The solution of \eqref{E1.2bis} is in the following sense: fixing 
a  realization $\omega \in
\Omega$, $Y(\cdot,\omega)$ is a weak solution to the first line of 
\eqref{E1.2} with $u(t,\xi)= X(t,\xi,\omega)   $.
Moreover  $ X(t,\xi,\omega)  $ is the $\mu$-marginal weighted law 
of $Y_t(\cdot,\omega)$.


The paper  includes the following main achievements.
\begin{enumerate}
\item  If we replace in \eqref{E1.2bis}
$ a(s,\xi,\omega) = \Phi(X(s,\xi,\omega))$ and $a$ is bounded 
and non-degenerate, we show existence and uniqueness of    the solution,
 strongly in $\omega$, weakly in $\omega_1 \in \Omega_1$, see Proposition 
\ref{P4.1}. We also show the existence of law densities,
for $P$-almost all quenched $\omega$, see Proposition \ref{P4.2}.
\item Theorem \ref{T32} states that the $\mu$-marginal weighted laws $X$
of a solution $Y$ of a
  {\it doubly stochastic non-linear 
diffusion} problem constitute a solution of the stochastic porous media 
equation \eqref{PME}.
\item Conversely, given a solution $X$ of \eqref{PME}, under suitable
 conditions, there is  a solution $Y$ of the  doubly stochastic non-linear 
diffusion. This is discussed in  Theorem \ref{thm6.1} 
and in Theorem \ref{T73}, distinguishing respectively the cases when 
 $\psi$ is non-degenerate and degenerate, see Definition \ref{DNond}.
\item When $\psi$ is non-degenerate, then the doubly stochastic non-linear 
diffusion problem also admits uniqueness, see Theorem \ref{thm6.1}.
\item Section  \ref{SFiltering} illustrates a filtering interpretation 
for a solution
of SPDE \eqref{PME}.
    Indeed, the  $\mu$-marginal weighted laws  $X$ of a solution $Y$ 
 of a doubly stochastic non-linear 
diffusion problem \eqref{E1.2bis}  can be seen as  
 {\it conditional densities} of  $Y_t, t \in [0,T]$  
with respect to some  probability measure.
\item Uniqueness of the stochastic Fokker-Planck equation
obtained replacing $\Phi^2$ by a 
function $a(t,\omega,\xi)$ in \eqref{PME}, see Theorem \ref{T51}. 
\item Existence of a density to the solution of \eqref{E1.2bis},
see Proposition \ref{P4.2}.
\end{enumerate}


\section{Preliminaries} 

\setcounter{equation}{0}

\label{S2}

\subsection{Basic notations}

First we introduce some basic recurrent notations.
$\shm(\R)$ 
denotes the space of
finite real 
 measures. 
 \\
We recall that  $\shs(\R)$ is the space of the Schwartz fast decreasing
test functions.  $\shs'(\R)$ is its dual, i.e. 
the space of Schwartz tempered distributions.
On $\shs'(\R)$, the map $(I-\Delta)^\frac{s}{2},  s \in \R,$ is well-defined.
For $s \in \R$, $H^s(\R)$ denotes the classical
Sobolev space consisting of all functions  $f \in \shs'(\R)$ such that 
$(I-\Delta)^\frac{s}{2} f \in L^2(\R)$.
We introduce   the norm
 $$\Vert f \Vert_{H^s} := \Vert (I-\Delta)^\frac{s}{2} f \Vert_{L^2},$$
where $\Vert \cdot \Vert_{L^p}$ is the classical $L^p(\R)$-norm
for $ 1 \le p \le \infty$.
In the sequel, we will often simply denote $H^{-1}(\R)$, 
by  $H^{-1}$ and $L^2(\R)$  by $L^2$.
Furthermore, $W^{r,p}$ denote the classical Sobolev space of order $r \in \N$
in $L^p(\R)$ for $1 \le p \le \infty$. 
\begin{defi} \label{DMultipl}
Given a function $e$ belonging to $L^1_{\rm loc}(\R) \cap  \shs'(\R)$, we say 
that it is an
{\bf $H^{-1}$-multiplier}, if the map
$ \varphi \mapsto \varphi  e$ 
is continuous from $\shs(\R)$ to $H^{-1}$ 
with respect to the $H^{-1}$-topology on both spaces.
\end{defi}
In the following lines we give some other sufficient  conditions
on a function $e$ to be an {\bf $H^{-1}$-multiplier}.

\begin{lemma} \label{LMultipl}
Let $e\, : \, \R \to \R$.  If  $e\in W^{1,\infty}$ 
(for instance  if $e \in W^{2,1}$), 
 then $e$ is a $H^{-1}(\R)$-multiplier.
In particular the functions $e^i, 0 \le i \le N$ of Definition \ref{E1.0}
are   $H^{-1}(\R)$-multipliers.
\end{lemma}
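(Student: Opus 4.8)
The goal is to show that if $e \in W^{1,\infty}(\R)$ then multiplication by $e$ is a bounded operator on $H^{-1}(\R)$, i.e. for every $\varphi \in \shs(\R)$ one has $\norma{\varphi e}_{H^{-1}} \le C \norma{\varphi}_{H^{-1}}$ with $C$ depending only on $\norma{e}_{W^{1,\infty}}$; the general claim then follows by density of $\shs(\R)$ in $H^{-1}(\R)$. The natural route is by duality: $H^{-1}$ is the dual of $H^1$, so it suffices to estimate $\abs{\bracket{\varphi e, g}}$ for $g \in \shs(\R)$ in terms of $\norma{\varphi}_{H^{-1}} \norma{g}_{H^1}$. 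Writing $\bracket{\varphi e, g} = \bracket{\varphi, e g}$, the problem reduces to showing that $g \mapsto e g$ is bounded on $H^1(\R)$, which is the dual/adjoint statement. This is the clean reformulation: $e$ is an $H^{-1}$-multiplier as soon as it is an $H^1$-multiplier, and for $H^1$ boundedness the Leibniz rule gives $\partial_\xi(e g) = e\, \partial_\xi g + (\partial_\xi e)\, g$, so $\norma{e g}_{H^1} \le \norma{e}_{L^\infty}\norma{g}_{L^2} + \norma{e}_{L^\infty}\norma{\partial_\xi g}_{L^2} + \norma{\partial_\xi e}_{L^\infty}\norma{g}_{L^2} \le (2\norma{e}_{L^\infty} + \norma{\partial_\xi e}_{L^\infty})\norma{g}_{H^1}$. (Here $H^1(\R)$ is normed equivalently by $\norma{g}_{L^2} + \norma{\partial_\xi g}_{L^2}$.) This handles the $W^{1,\infty}$ case completely.

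For the parenthetical remark that $e \in W^{2,1}(\R)$ suffices, I would simply invoke the Sobolev embedding $W^{2,1}(\R) \hookrightarrow W^{1,\infty}(\R)$ in dimension one: if $e' \in W^{1,1}(\R)$ then $e'$ is (a.e. equal to) a bounded continuous function with $\norma{e'}_{L^\infty} \le \norma{e''}_{L^1}$, and likewise $e$ itself is bounded, so $e \in W^{1,\infty}$ and the previous paragraph applies. Finally, the last sentence of the lemma is immediate from Assumption \ref{E1.0}: the coefficients $e^i$ lie in $C^2_{\rm b}(\R)$, hence in particular in $W^{1,\infty}(\R)$, so each $e^i$ is an $H^{-1}(\R)$-multiplier.

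There is no serious obstacle here; the only point requiring a little care is the passage from the pointwise Leibniz estimate on $\shs(\R)$ to a genuine statement about the multiplication operator on all of $H^{-1}$ (resp. $H^1$), which one justifies by approximating a general element of $H^1$ by Schwartz functions and checking that $e g$ depends continuously on $g$; since $e$ is bounded and Lipschitz, this approximation argument is routine. One should also note that the duality pairing $\bracket{\varphi, eg}$ is well-defined and agrees with the $H^{-1}$--$H^1$ pairing because $e g \in H^1 \subset \shs'$ and $\varphi \in \shs$, so no distributional subtleties arise.
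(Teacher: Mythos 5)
Your argument is correct and is essentially the paper's own proof: both reduce by duality to showing that $g \mapsto eg$ is bounded on $H^1(\R)$ and then apply the Leibniz rule, differing only in the explicit constant. Your additional remarks on the embedding $W^{2,1}(\R) \hookrightarrow W^{1,\infty}(\R)$ and on $e^i \in C^2_{\rm b}(\R)$ correctly fill in the parts the paper leaves implicit.
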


\begin{proof}
By duality arguments, we observe that it is enough to show the existence of
a constant $\shc(e)$ such that
\begin{equation}\label{Lmult1} 
\norm{eg}_{H^1} \leq \shc(e)\norm{g}_{H^1},\; \forall\; g \in \Sscr (\R).
\end{equation}
\eqref{Lmult1} follows by product derivation rules, with 
for instance $\shc(e)=\sqrt 2 \left(\norm{e}^2_\infty + 
\norm{e'}^2_\infty\right)^{\frac12}.$
\end{proof}
With respect to  the random field $\mu$, we introduce a notation 
for the It\^o type  stochastic integral below.

Let  $ Z = (Z(s,\xi), s \in [0,T], \xi \in \R)$ be a random field
on $(\Omega, \shf, (\shf_t), P) $
such that $\int_0^T \left (\int_{\R}  \vert Z(s,\xi) \vert \d \xi \right)^2 
\d s < \infty$
a.s. and it is an $L^1(\R)$-valued $(\shf_s)$-progressively measurable process.
 Then,  the stochastic integral
\begin{equation} \label{DSI}
\int_{[0,t]\times \R} Z(s, \xi)  \mu(\d s, \xi) d \xi := \sum_{i=0}^N \int_0^t 
 \left(\int_\R Z(s,\xi) e^i(\xi) \d \xi\right) \d W^i_s,
 \end{equation}
is well-defined. \\More generally, if  
$ s \mapsto Z(s,\cdot)$ is a measurable map 
$[0,T] \times \Omega \mapsto \shm(\R)$, where $\shm(\R)$ is the space of
signed finite measures, such that
$\int_0^T \Vert Z(s,\cdot) \Vert_{\rm var}^2 \d s < \infty$, 
then the stochastic integral 
\begin{equation} \label{DSIMeas}
\int_{[0,t]\times \R} Z(s, \xi)  \mu(\d s, \xi) d \xi := \sum_{i=0}^N \int_0^t 
 \left(\int_\R Z(s,\d \xi) \right) e^i(\xi) \d W^i_s,
 \end{equation}
is well-defined.

We specify now better the filtration $ (\shf_t)_ {t \in [0,T]}$ 
of the introduction.
We will consider a fixed filtered
 probability space 
$(\Omega, \shf, P, (\shf_t)_ {t \in [0,T]})$,
where  $ (\shf_t)_ {t \in [0,T]}$ is the canonical filtration
of a standard Brownian motion  $(W^1, \ldots, W^N)$ 
enlarged with the $\sigma$-field generated by $x_0$. 
   We also suppose that $\shf_0$ contains the $P$-null sets and 
$\shf = \shf_T$. 

Let $(\Omega_1, \shh)$  be a measurable space.
In the sequel, we will also consider 
 another filtered probability space
$(\Omega_0, \shg, {\bf Q}, (\shg_t)_{t \in [0,T]})$,
where $\Omega_0 = \Omega_1 \times \Omega $, $\shg = \shh \otimes \shf.$

Clearly any random element $Z$ on  $(\Omega, \shf)$
will be implicitly extended to $(\Omega_0, \shg)$
setting $Z(\omega_1,\omega) = Z(\omega)$. 
In particular 
 $W^i, i = 1 \ldots N$ will be extended in that way.

Here we fix some conventions concerning measurability.
Any  topological space $E$ is naturally equipped with its
Borel $\sigma$-algebra $\shb(E)$. 
For instance
$\shb(\R)$ (resp. $\shb([0,T]$) denotes 
the Borel $\sigma$-algebra
of $\R$ (resp. $[0,T]$).

Given any probability space $(\tilde \Omega, \tilde \shf, \tilde P)$,
the $\sigma$-field $\shf$ will always be omitted.
When we will say that a map $T: \Omega \times E \rightarrow \R$
is measurable, we will implicitly suppose that 
the corresponding $\sigma$-algebras are $\shf \otimes \shb(E)$ and $\shb(\R)$.

All the processes on any generic measurable space $(\Omega_2, \shf_2)$
will be considered to be measurable with respect to both
 variables $(t,\omega)$.
In particular any processes on  $\Omega_1 \times \Omega$
is supposed to be measurable with respect to 
$([0,T] \times \Omega_1 \times \Omega, \shb([0,T]) \otimes \shh \otimes \shf)$.

A function $(A, \omega) \mapsto Q(A, \omega)$ 
from $\shh \times \Omega \rightarrow \R_+$ 
is called {\bf random  kernel} (resp.  {\bf random probability kernel})
if for each $\omega \in \Omega$, $Q(\cdot, \omega)$
is a finite  positive (resp. probability) measure and for each $A \in \shh$,
$\omega \mapsto Q(A, \omega)$ is $\shf$-measurable.
The finite measure $Q(\cdot, \omega)$ will also be denoted
by  $Q^\omega$. 
To that random  kernel 
we can associate a specific  finite measure (resp. probability)
denoted by ${\bf Q}$ on $(\Omega_0, \shg)$
setting ${\bf Q}(A \times F) = \int_F Q(A, \omega) P(\d \omega) = \int_F Q^\omega(A) P(\d \omega) $,
for $A \in \shh, F\in \shf$.
The probability $Q$ from above 
will be supposed here and below to be associated with a random probability  kernel.

\begin{defi}\label{DSEPS}
If there is a measurable space $(\Omega_1, \shh)$
and a random kernel $Q$ as before, then the probability space
$(\Omega_0, \shg, {\bf Q})$ will be called {\bf suitable enlarged probability
space} (of $(\Omega,\shf,P)$).
\end{defi}
As said above, any random variable on $(\Omega, \shf)$ will be considered as
a random variable on $\Omega_0 = \Omega_1 \times \Omega$. Then, obviously, $W^1, \ldots, W^N$ 
are independent Brownian motions also 
$(\Omega_0, \shg, Q)$.

Given a local martingale $M$ on any filtered probability space,
 the process $Z:=\she(M)$
denotes its Dol\'eans exponential, which is a local martingale.
 In particular it is the unique
 solution of $\ dZ_t = Z_{t-} dM_t, \quad Z_0 = 1$. 
When $M$ is continuous we have 
 $Z_t = e^{M_t- \halb \langle M \rangle_t}$.

\subsection{The concept of marginal weighted  laws}
\label{nulaw}

Let us consider a suitably enlarged probability space as
in Definition \ref{DSEPS}.
\begin{defi}\label{mylau}
Let  $Y: \Omega_1 \times \Omega \times [0,T] \rightarrow \R$ 
be a measurable process, progressively measurable on $(\Omega_0, \shg, {\bf Q},
 (\shg_t)),$ where $(\shg_t)$ is some filtration on $(\Omega_0,\shg, {\bf Q})$
such that $W^1, \ldots, W^N$ are $(\shg_t)$-Brownian motions on 
$(\Omega_0,\shg,  {\bf Q})$.
We will make use of the  stochastic integral notation 
\begin{equation} \label{1.2bis}
\int_0^t \mu(ds, Y_s) = \sum_{i=0}^N
 \int_0^t e^i(Y_s) dW^i_s, t  \in [0,T].
\end{equation}
As we shall see below in Proposition \ref{P28},
 for every $t\in [0,T]$
\begin{equation} \label{e2.1}
E^{\bf Q} \left( \mathcal{E}_t \left(\int^\cdot_0 \mu (\d s, Y_s) \right) \right) < \infty.
\end{equation}
To $Y$, we will associate its {\bf family of $\mu$-marginal weighted laws},
 (or simply  {\bf family of $\mu$-weighted laws})  i.e.
 the family of random kernels ($t \in [0,T]$),
\begin{equation*}
\Gamma_t = \left(\Gamma_t^Y (A,\omega), \; A\in \shb(\R), \; 
\omega \in \Omega\right)
\end{equation*}
defined by
\begin{equation}
\label{1.2ter}
\varphi   \mapsto E^{Q^\omega} \left( \varphi (Y_t (\cdot,\omega))
 \she_t ( \int^\cdot_0 \mu (\d s, Y_s) (\cdot , \omega)) \right) 
= \int_\R \varphi(r) \Gamma_t^Y(\d r, \omega),
\end{equation}
where $\varphi$ is a generic bounded real Borel function. We will also say that for fixed $t \in [0,T], \; \Gamma_t$ is {\bf the $\mu$-marginal weighted law}
 of $Y_t$.
\end{defi}
\begin{rem} \label{R27}
\begin{enumerate}[i)]
\item If $\O$ is a singleton $\{\o_0\}, \; e^i=0,\; 1\leq i\leq N$, the
 $\mu$-marginal weighted laws coincide with the weighted  laws
\begin{equation*}
\vp \mapsto E^{\bf Q} \( \vp (Y_t) \exp \(\int^t_0 e^0 (Y_s) \d s \) \),
\end{equation*}
with ${\bf Q} =Q^{\o_0}$. In particular if $\mu \equiv0$ then the $\mu$-marginal
weighted laws are the classical laws.
\item By \eqref{e2.1}, for any $t\in [0,T]$ , for $P$ almost all
 $\o \in \Omega$,
\begin{equation} \label{ER27}
E^{Q^\o} \( \she_t (\int^\cdot_0 \mu (\d s, Y_s) (\cdot \; , \o))\)< \infty.
\end{equation}
\item The function $(t,\o)\mapsto \Gamma_t(A,\o)$ is measurable, for any $A \in
 \shb(\R)$, because $Y$ is a measurable process.
\item In the case $e^0 = 0$, the situation is the following.
 For each fixed $\omega \in \Omega$,
\eqref{1.2ter}  is a (random) non-negative measure
which is not a  probability. 
 However the expectation of its 
 total mass is indeed $1$. 
\end{enumerate}
\end{rem}
\begin{prop} \label{P28} 
Consider the situation of Definition \ref{mylau}.
Then we have the following.
\begin{enumerate}[i)]
\item 
The process
 $M_t:=\she_t \( \sum_{i=1}^N \int^\cdot_0 e^i (Y_s)\d W_s^i \)$ is a 
 martingale.
We emphasize that the sum starts indeed at $i=1$.
\item The quantity \eqref{e2.1} is bounded by
$ \exp \(T\norm{e^0}_\9 \).$
\item $E^{\bf Q} (M_t^2) \le \exp(3T \sum_{i=1}^N \Vert e^i \Vert_\infty^2),
t \in [0,T]$. Consequently $M$ is a uniformly integrable martingale.  
\item For  $P$-a.e. $\omega \in \Omega$,
$\sup_{0 \le t \le T} \norm{\Gamma_t (\cdot,\o)}_{\var} <\9,$
where we remind that $\Vert \cdot \Vert_{\var} $ stands for the total variation. 
\end{enumerate}
\end{prop}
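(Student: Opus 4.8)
The plan is to establish items i)--iv) in that order, since ii) and iii) both rest on i), while iv) combines iii) with a factorization obtained along the way in ii). Throughout I would set $N_t:=\sum_{i=1}^N\int_0^t e^i(Y_s)\,dW^i_s$, so that $M_t=\she_t(N)$. The key structural remark is that, by Assumption \ref{E1.0}, each $e^i$ is bounded; since $Y$ is $(\shg_t)$-progressively measurable and the $W^i$ are $(\shg_t)$-Brownian motions on $(\Omega_0,\shg,{\bf Q})$ (as in Definition \ref{mylau}), $N$ is a continuous $(\shg_t)$-local martingale whose bracket is controlled deterministically:
\begin{equation*}
\langle N\rangle_t=\sum_{i=1}^N\int_0^t e^i(Y_s)^2\,ds\ \le\ c:=T\sum_{i=1}^N\Vert e^i\Vert_\infty^2,\qquad t\in[0,T].
\end{equation*}

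For i) I would simply invoke Novikov's criterion: as $\langle N\rangle_T\le c<\infty$ we have $E^{\bf Q}[\exp(\tfrac12\langle N\rangle_T)]\le e^{c/2}<\infty$, so $M=\she(N)$ is a genuine (in fact uniformly integrable) martingale, in particular $E^{\bf Q}[M_t]=1$ for all $t$. For ii), recalling $W^0_t\equiv t$, one writes $\int_0^t\mu(ds,Y_s)=N_t+A_t$ with $A_t:=\int_0^t e^0(Y_s)\,ds$ a continuous finite-variation process; since $A$ adds nothing to the bracket, the Dol\'eans exponential factorizes, $\she_t\big(\int_0^\cdot\mu(ds,Y_s)\big)=\she_t(N)\,e^{A_t}=M_t\,e^{A_t}$ (checked by applying It\^o's formula to $M_t e^{A_t}$, or by Yor's formula). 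Since $\vert A_t\vert\le T\Vert e^0\Vert_\infty$, this gives $\she_t\big(\int_0^\cdot\mu(ds,Y_s)\big)\le e^{T\Vert e^0\Vert_\infty}M_t$, and taking $E^{\bf Q}$ together with i) shows \eqref{e2.1} is finite and bounded by $\exp(T\Vert e^0\Vert_\infty)$.

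For iii) I would write $M_t^2=\exp(2N_t-\langle N\rangle_t)$ and split the exponent so as to reduce to martingales of the form $\she(\lambda N)$: using $2N_t-\langle N\rangle_t=\tfrac12(4N_t-4\langle N\rangle_t)+\tfrac12(2\langle N\rangle_t)$ and the Cauchy--Schwarz inequality,
\begin{equation*}
E^{\bf Q}[M_t^2]\ \le\ E^{\bf Q}\big[\exp(4N_t-4\langle N\rangle_t)\big]^{1/2}\,E^{\bf Q}\big[\exp(2\langle N\rangle_t)\big]^{1/2}.
\end{equation*}
Then $\exp(4N_t-4\langle N\rangle_t)=\she_t(4N)\exp(4\langle N\rangle_t)\le e^{4c}\,\she_t(4N)$, with $\she(4N)$ a martingale by Novikov (its bracket $16\langle N\rangle_T$ is bounded), and $\exp(2\langle N\rangle_t)\le e^{2c}$, giving $E^{\bf Q}[M_t^2]\le e^{2c}\cdot e^{c}=e^{3c}$, which is the claimed bound; boundedness in $L^2$ then yields uniform integrability. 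For iv), the starting point is that $\she_t\big(\int_0^\cdot\mu(ds,Y_s)\big)$ is an exponential, hence strictly positive pathwise, so for $P$-a.e.\ $\omega$ the measure $\Gamma_t^Y(\cdot,\omega)$ of \eqref{1.2ter} is non-negative and $\Vert\Gamma_t(\cdot,\omega)\Vert_{\var}=\Gamma_t^Y(\R,\omega)=E^{Q^\omega}\big[\she_t(\int_0^\cdot\mu(ds,Y_s)(\cdot,\omega))\big]$ (insert $\varphi\equiv 1$ in \eqref{1.2ter}). By the factorization of ii) this is $\le e^{T\Vert e^0\Vert_\infty}E^{Q^\omega}[M^*_T(\cdot,\omega)]$, with $M^*_T:=\sup_{0\le t\le T}M_t$, measurable by path-continuity of $M$. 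Doob's $L^2$-maximal inequality and iii) give $E^{\bf Q}[M^*_T]\le 2E^{\bf Q}[M_T^2]^{1/2}\le 2e^{3c/2}<\infty$, and since ${\bf Q}=\int_\Omega Q^\omega\,P(d\omega)$ one gets $E^{Q^\omega}[M^*_T(\cdot,\omega)]<\infty$ for $P$-a.e.\ $\omega$, whence $\sup_{0\le t\le T}\Vert\Gamma_t(\cdot,\omega)\Vert_{\var}<\infty$ for such $\omega$.

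The main obstacle is modest and confined to item iv): one must ensure the passage from the global integrability $E^{\bf Q}[M^*_T]<\infty$ to the $\omega$-wise statement $E^{Q^\omega}[M^*_T(\cdot,\omega)]<\infty$ for $P$-a.e.\ $\omega$ is legitimate, which uses joint measurability of $M^*_T$ (from continuity of $M$) and the fact, built into the definition of a random kernel and of ${\bf Q}$ preceding Definition \ref{DSEPS}, that ${\bf Q}$ disintegrates over $\Omega$ along $Q$. Everything else is routine: Novikov's criterion for the various $\she(\lambda N)$, the factorization of the Dol\'eans exponential over a continuous finite-variation perturbation, and Doob's inequality.
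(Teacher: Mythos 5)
Your proof is correct and follows essentially the same route as the paper: Novikov for i), $E^{\bf Q}(M_t)=1$ plus the factorization $\she_t(\int_0^\cdot\mu(\d s,Y_s))=M_t e^{A_t}$ for ii), an exponential-martingale bound for iii), and a maximal inequality combined with the disintegration ${\bf Q}=\int Q^\omega\,P(\d\omega)$ for iv). The only (harmless) deviations are that in iii) you use Cauchy--Schwarz with $\she(4N)$ where the paper factorizes $M_t^2$ directly as a positive martingale times $\exp(3\sum_i\int_0^t(e^i)^2(Y_s)\,\d s)$, and in iv) you use Doob's $L^2$-maximal inequality where the paper uses Burkholder--Davis--Gundy plus Jensen.
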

\begin{rem} \label{R29}
Proposition \ref{P28} ii) yields in particular that
 $Y$ always admits  $\mu$-marginal weighted laws.
\end{rem}
\begin{proof}
\begin{enumerate}[i)]
\item The result follows  since the  Novikov condition
$ E^{\bf Q}  \(\exp\(\frac12 \sum_{i=1}^N \int^t_s e^i (Y_s)^2 \d s\)\)<\9$
is verified,   because the functions $e^i, \; i=1\dots N$, are bounded.
\item This follows because $E^{\bf Q} (M_t)=1, \forall t\in [0,T]$.
\item $M^2_t$ is equal to
  $N_t \exp\left(3\sum_{i=1}^N \int_0^t (e^i)^2(Y_s) \d s\right), $
where $N$ is a positive martingale with  $N_0 = 1$.
\item For $t\in [0,T]$,
\begin{align*}
\sup_{t\leq T} \norm{\Gamma_t(\cdot\;,\o)}_{\var} &= \sup_{t\leq T} E^{Q^\o} \(M_t \exp \(\int^t_0 e^0 (Y_s) \d s\)\)\\
&\leq \exp \(T\norm{e^0}_\9 \) \sup_{t \le T} E^{Q^\o} \(M_t\).
\end{align*}
Taking the expectation with respect to $P$ it implies
\begin{eqnarray*}
E^P\(\sup_{t\leq T} \norm{\Gamma^Y_t (\cdot\;,\o)}_{\var} \)
&\le& \exp \( T\norm{e^0}_\9 \) E^P\( \sup_{t\leq T} E^{Q^\o} 
\(M_t\)\)\\
&\le &  \exp \(T\norm{e^0}_\9 \) E^P \(E^{Q^\o}\(\sup_{t\leq T} M_t\)\).
\end{eqnarray*}
By the Burkholder-Davis-Gundy (BDG) inequality this is bounded by
\begin{eqnarray*}
3 \exp \(T\norm{e^0}_\9\) E^{\bf Q} \( \left\langle M \right\rangle^{\frac12}_T\) & \le &
 3 \exp \(T\norm{e^0}_\9\)
 E^{\bf Q} \(  \left[ \int^T_0 \d s  \sum_{i=1}^N M^2_s
 e^i (Y_s)^2\right]^\halb \) \\
& \le& {\mathrm C}(e,N,T) \left\{E^{\bf Q}  \left( \int^T_0 \d s M_s^2 
 \right)\right\}^\frac{1}{2} ,
\end{eqnarray*}
where the last inequality is due to Jensen's inequality;
 ${\mathrm C}(e,N,T)$ is a constant
 depending on $N,T$ and $e^i,\; i=0\dots N,$.
By Fubini's Theorem and item iii),
 we have
\begin{equation*}
 E^{\bf Q}  \left( \int^T_0 \d s  M_s^2 \right)
\le T\exp(3T \sum^N_{i=1}\Vert e^i \Vert_\infty). 
\end{equation*}
\end{enumerate}
\end{proof}

The lemma below gives a characterization  
of the $\mu$-weighted laws of a process $Y$ living on
an enlarged probability space.
\begin{lemma} \label{LMargU}
Let $Y$ (resp. $\tilde Y$) be a process on a suitable 
enlarged probability space $(\Omega_0,\shg,  {\bf Q})$
(resp.  $(\tilde \Omega_0, \tilde \shg, \tilde {\bf Q})$).
Set $W = (W^1, \ldots, W^N)$.
Suppose that the law of $(Y,W)$ under ${\bf Q} $ and
the law of $(\tilde Y, W)$ under $\tilde{\bf Q}  $ are the same.
Then, the $\mu$-marginal weighted laws of $Y$ under ${\bf Q} $
coincide a.s. with the   $\mu$-marginal weighted
 laws of $\tilde Y$ under $\tilde {\bf Q} $. 
\end{lemma}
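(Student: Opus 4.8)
The statement is a pushforward/change-of-variables assertion: the $\mu$-marginal weighted laws, as defined in \eqref{1.2ter}, depend on $Y$ only through the joint law of the pair $(Y,W)$ under the respective probability measures. The natural approach is to recognize that every ingredient in the defining formula \eqref{1.2ter} is a measurable functional of the path of $(Y,W)$, and then invoke the elementary fact that expectations of bounded measurable functionals of a random element depend only on its law. The only subtlety is that \eqref{1.2ter} involves a stochastic integral, namely $\she_t(\int_0^\cdot \mu(\d s,Y_s))$ with $\int_0^\cdot \mu(\d s,Y_s) = \sum_{i=0}^N\int_0^\cdot e^i(Y_s)\,\d W^i_s$, which is defined only up to null sets and a priori via an $L^2$-type limiting procedure; one must make sure this object is genuinely a measurable function of the path $(Y,W)$, not just of the underlying probability space.

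First I would fix the bounded Borel test function $\varphi$ and rewrite the left-hand side of \eqref{1.2ter}, integrated against $P$ (equivalently, the kernel $\Gamma_t^Y$ tested against $\varphi$ and then integrated against $P$, i.e. really the measure $\bf Q$-expectation), as
\[
E^{\bf Q}\Bigl(\varphi(Y_t)\,\she_t\bigl(\textstyle\sum_{i=0}^N\int_0^\cdot e^i(Y_s)\,\d W^i_s\bigr)\Bigr).
\]
Because $M$ is a genuine martingale (Proposition \ref{P28} i) and iii)) and the drift part $e^0$ contributes only the bounded factor $\exp(\int_0^t e^0(Y_s)\,\d s)$, this quantity is finite and well-defined. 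The key lemma I would isolate is: there exists a measurable map $\Psi_t: C([0,T];\R)\times C([0,T];\R^N)\to\R$, depending only on $t$, $\varphi$, and the fixed functions $e^0,\dots,e^N$, such that $\she_t(\int_0^\cdot\mu(\d s,Y_s))\,\varphi(Y_t) = \Psi_t(Y_\cdot,W_\cdot)$ holds $\bf Q$-a.s.\ (and the analogous identity holds $\tilde{\bf Q}$-a.s.\ with $\tilde Y$). Granting this, the conclusion is immediate: since $\mathrm{Law}_{\bf Q}(Y,W) = \mathrm{Law}_{\tilde{\bf Q}}(\tilde Y,W)$, the two expectations $E^{\bf Q}(\Psi_t(Y,W))$ and $E^{\tilde{\bf Q}}(\Psi_t(\tilde Y,W))$ coincide; as this holds for all bounded Borel $\varphi$, the integrated kernels agree, and since $\varphi$ ranges over a measure-determining class the kernels $\Gamma_t^Y$ and $\Gamma_t^{\tilde Y}$ agree $P$-a.s.

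The main obstacle is constructing $\Psi_t$ and proving it is a bona fide measurable functional of the path. The standard way is a stochastic-calculus approximation argument that is "canonical": approximate each $\int_0^\cdot e^i(Y_s)\,\d W^i_s$ by Riemann–Stieltjes-type sums $\sum_k e^i(Y_{s_k})(W^i_{s_{k+1}\wedge\cdot}-W^i_{s_k\wedge\cdot})$ along a fixed sequence of deterministic partitions; each such sum is manifestly a deterministic continuous functional of $(Y,W)$, and since $Y$ is adapted and $e^i$ is bounded and continuous, these sums converge in $\bf Q$-probability (indeed in $L^2$) to the It\^o integral. Passing to an a.s.-convergent subsequence, the limit — defined as a $\limsup$ of the partition functionals, hence Borel measurable on path space — agrees $\bf Q$-a.s.\ with the stochastic integral. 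Crucially the same deterministic functional, applied to $(\tilde Y,W)$, converges $\tilde{\bf Q}$-a.s.\ to the corresponding stochastic integral because the convergence is driven only by the joint law (modulus of continuity of $W$, boundedness and continuity of $e^i$, adaptedness), which is shared. Then $\she_t$ is a continuous function of the pair (integral value, quadratic variation), the quadratic variation $\sum_i\int_0^t e^i(Y_s)^2\,\d s$ is a pathwise Lebesgue integral hence a measurable path functional, and multiplying by $\varphi(Y_t)$ finishes the construction. Assembling these pieces — in particular checking that the null sets on which the various a.s.\ identities fail do not interfere across the two spaces, which is automatic precisely because everything is pinned to the common joint law — gives the lemma and hence the result.
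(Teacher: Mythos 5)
Your construction of a pathwise Borel functional $\Psi_t$ on $C([0,T];\R)\times C([0,T];\R^N)$ representing $\varphi(Y_t)\,\she_t(\int_0^\cdot\mu(\d s,Y_s))$ is sound and is in the same spirit as the paper, which uses the regularization $\she_t(\sum_i\int_0^\cdot e^i(Y_s)\frac{W^i_{s+\varepsilon}-W^i_s}{\varepsilon}\d s)$ from \cite{RVSem} in place of your Riemann sums; either approximation, being a deterministic functional of the path and converging in probability on both spaces, does the job. The problem is the final deduction.

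From the equality of the laws of $(Y,W)$ and $(\tilde Y,W)$ you obtain only
$E^{\bf Q}(\Psi_t(Y,W))=E^{\tilde{\bf Q}}(\Psi_t(\tilde Y,W))$, i.e.
$E^P\bigl(\Gamma^Y_t(\varphi,\cdot)\bigr)=E^P\bigl(\Gamma^{\tilde Y}_t(\varphi,\cdot)\bigr)$ for each bounded Borel $\varphi$. This is equality of the \emph{$P$-averaged} weighted laws, whereas the lemma asserts that the random kernels $\omega\mapsto\Gamma^Y_t(\cdot,\omega)$ and $\omega\mapsto\Gamma^{\tilde Y}_t(\cdot,\omega)$ coincide for $P$-a.e.\ $\omega$. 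Two nonnegative random variables with equal expectations need not be a.s.\ equal, and letting $\varphi$ range over a measure-determining class only handles the dependence on the Borel set $A\subset\R$, not the dependence on $\omega$. To close the gap you must localize in $\omega$: test against $1_F(\omega)\,f(Y_t)\,\she_t(\cdots)$ for $F\in\shf_t$. This is legitimate because $\shf_t$ is (up to null sets) generated by $W$ (and the initial datum), so $1_F$ is itself a measurable functional of $W$ and the joint-law hypothesis still applies; this yields the paper's identity \eqref{EMargU}. Setting $R_j(\omega)=E^{Q^\omega}(f_j(Y_t)\she_t(\cdots))$ and $\tilde R_j$ analogously for a countable measure-determining family $(f_j)$, one gets $E^P(1_FR_j)=E^P(1_F\tilde R_j)$ for all $F$, hence $R_j=\tilde R_j$ $P$-a.s.\ for every $j$, and only then the a.s.\ equality of the kernels. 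This localization step is the essential content of the paper's proof and is missing from your argument.
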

\begin{proof}  \
Let $0 \le t \le T$.
Using the assumption, we deduce    that
for any  bounded continuous function $f: \R \rightarrow \R$,
and every $F \in \shf_t$, we have
\begin{equation} \label{EMargU}
E^{{\bf Q}} \left (1_F f(Y_t) \she_t\left(\sum_{i=0}^N 
\int_0^\cdot  e^i(Y_s) dW^i_s \right)\right) = 
E^{\tilde {\bf Q} } \left (1_F f(\tilde Y_t) \she_t\left(\sum_{i=0}^N 
\int_0^\cdot  e^i(\tilde Y_s) dW^i_s\right)\right). 
\end{equation}
To show this, using
classical regularization properties of It\^o integral,
see e.g. Theorem 2 in \cite{RVSem},
and  uniform integrability arguments, 
 we first  observe that 
$$ \she_t\left(\sum_{i=0}^N 
\int_0^\cdot  e^i(Y_s) dW^i_s\right)$$
is the limit in $L^2(\Omega_0, {\bf Q})$ of 
$$ \she_t \left(\sum_{i=0}^N 
\int_0^\cdot  e^i(Y_s) \frac{W^i_{s+\varepsilon} - W^i_s}{\varepsilon} \d s
\right).$$
A similar approximation property arises replacing
$Y$ with $\tilde Y$ and ${\bf Q} $ with $\tilde {\bf Q} $.
Then \eqref{EMargU} easily follows.\\ 
To conclude, it will be enough to show the existence
of a countable  
 family $(f_j)_{j \in \N}$ of bounded continuous real functions
for which, for $P$ almost all $\omega \in \Omega$, 
for any $j \in \N$, we have $R_j = \tilde R_j$
where 
\begin{eqnarray*}
R_j(\omega) &=& E^{Q^\omega} \left(f_j(Y_t(\cdot,\omega)) 
 \she_t\left(\sum_{i=0}^N 
\int_0^\cdot  e^i(Y_s(\cdot,\omega)) dW^i_s\right)\right) \nonumber\\
&& \\
 \tilde R_j(\omega) &=& E^{\tilde Q^\omega} \left(f_j(\tilde Y_t(\cdot,\omega)) 
 \she_t\left(\sum_{i=0}^N 
\int_0^\cdot  e^i(\tilde Y_s(\cdot,\omega)) dW^i_s\right)\right) \nonumber.
\end{eqnarray*} 
This will follow, since applying \eqref{EMargU}, for any
$F \in \shf_t$, we have
$ E^P(1_F R_j) = E^P(1_F \tilde R_j)$.
\end{proof}

\subsection{SPDE, weak-strong existence of SDEs}
\label{WSE}

In this section we introduce the basic concepts related to
the stochastic porous media equation and the related
 non-linear diffusion.


 \begin{defi} \label{DSPDE}
 A  random field $X = (X(t, \xi, \omega), 
t \in [0,T], \xi 
 \in \R, \omega \in \Omega) $ is said to be a solution to \eqref{PME} if
 $P$ a.s. 
we have the following.
\begin{enumerate}
\item $X \in C([0,T]; \shs'(\R)) \cap  L^2([0,T]; L^1_{\rm loc} (\R))$.
\item $X$ is an $\shs'(\R)$
-valued $(\shf_t)$-progressively measurable process.
\item 
For any test function $\varphi \in \shs(\R)$ with compact support,
 $ t \in ]0,T]$ 
 a.s. we have 
\begin{eqnarray} \label{EDist}
\int_\R X(t,\xi) \varphi(\xi) \d\xi &=& \int_\R x_0(\d\xi) \varphi(\xi)  +
\halb \int_0^t \d s \int_\R 
\psi(X(s,\xi,\cdot))
 \varphi''(\xi) \d\xi
 \nonumber\\
& & \\
&+& \int_{[0,t] \times \R} X(s,\xi) \varphi(\xi) \mu(\d s,\xi) \d\xi.
\nonumber
\end{eqnarray}
\end{enumerate}
\end{defi}

At Definition \ref{DDoubleStoch}, we will present the concept of
{\it double stochastic non-linear diffusion} which is a McKean type 
equation with a supplementary source of randomness. 
Before this, as a first step, we will introduce a particular the case of simple
{\it double stochastic differential equation} (DSDE).
Let $\gamma:[0,T]\times \R \times \O \to \R$ be an $(\shf_t)$-progressively measurable random fields and
 $x_0$ be a  probability on $\shb(\R).$
\begin{defi}\label{DWeakStrong}
\begin{enumerate}
\item[a)] We say that (DSDE)$(\gamma, x_0)$ admits {\bf weak-strong existence}
 if there is a suitable
 extended probability space $(\Omega_0,\shg, {\bf Q} )$, i.e.
a  measurable space $(\Omega_1,\shh)$, a
 probability kernel $\(Q(\cdot\;,\o), \o\in\O \)$ on $\shh \times \O$, 
two ${\bf Q}$-a.s. continuous processes $Y,B$ on $(\O_0,\shg)$ 
where $\O_0=\O_1\times \O,\; \shg = 
\shh \otimes \shf$ such that the following holds.
\begin{description}
\item{1)} For almost all $\o$, $Y(\cdot,\o)$ is a (weak) solution to
\begin{equation} \label{DWS}
\begin{cases}
Y_t (\cdot\;,\o)=Y_0 + \int_0^t \gamma (s,Y_s (\cdot\;,\o),\o) \d B_s 
(\cdot\;,\o), \\
\text{Law} (Y_0)=x_0,
\end{cases} 
\end{equation}
with respect to $Q^{\o}$, where $B(\cdot\;,\o)$ is a $Q^\o$-Brownian motion
for almost all $\o$.

\item{2)} We denote $(\shy_t)$ the canonical  filtration
associated with $(Y_s, 0 \le s \le t)$ and
$\shg_t = \shy_t \vee (\{\emptyset, \Omega_1\} \otimes \shf_t)$.
 $W^1, \ldots, W^N$ is a $(\shg_t)$-martingale 
under ${\bf Q}  $.

\item{3)}
For every $0 \le s \le T$, for every bounded continuous 
$\sha: C([0,s]) \rightarrow \R$,
the r.v. $\o \mapsto E^{Q^\o}(\sha(Y_r(\cdot,\omega), r \in [0,s])) $
is $\shf_s$-measurable.

\end{description}
\item[b)] We say that (DSDE)$(\gamma,x_0)$ admits 
{\bf weak-strong uniqueness} if the following holds. 
Consider a measurable space $(\O_1,\shh)$ (resp. $(\wt \O_1, \wt \shh)$), 
a probability kernel $(Q(\cdot\;,\o),\o\in\O)$ (resp.  $(\wt 
Q(\cdot\;,\o),\o\in\O)$), with processes $(Y,B)$ (resp. $(\wt Y, \wt B)$) 
such that 
(\ref{DWS}) 
holds (resp.
(\ref{DWS}) 
  holds with $(\Omega_0,\shg,{\bf Q}  )$ replaced with $(\wt \Omega_0,
\wt \shg_0, \wt {\bf Q} ,$ $\wt {\bf Q}  $ being 
associated with $(\wt Q(\cdot\;, \o)$)).
Moreover we  suppose that item 2) is verified for $Y$ and $\tilde Y$.
\\
Then $(Y,W^1, \ldots, W^N)$ and $(\wt Y, W^1, \ldots, W^N)$ have the same law.
\item[c)] A process $Y$ fulfilling 
items 1) and 2) under (a)
will be called 
{\bf weak-strong solution of} (DSDE)$(\gamma,x_0)$.
\end{enumerate}
\end{defi}
\begin{rem} \label{R2.10}
Let $Y$ be a weak-strong solution of (DSDE)$(\gamma,x_0)$
with corresponding $B$.
\begin{description}
\item{a)} Since for almost all $\omega \in \Omega$, 
$B(\cdot,\omega)$ is a Brownian motion under $Q^\omega$,
it is clear that $B$ is a Brownian motion under $Q$,  
which is independent of $\shf_T$, i.e. independent
of $W^1, \ldots, W^N$. \\
Indeed let $\sha: C([0,T]) \rightarrow \R$ be a continuous
bounded functional, and denote by $\shw$ the Wiener measure
on  $C([0,T])^N$.
Let $F$ be a bounded  $\shf_T$-measurable r.v.
 Since for each $\omega$, $B(\cdot,\omega)$ is
a Wiener process with respect to $Q^\omega$,
we get 
\begin{eqnarray*}
 E^{\bf Q}  (F \sha(B)) &=& \int_\Omega F E^{Q^\omega}(\sha(B(\cdot,\omega))) \d P(\omega)
= \int_\Omega F(\omega) \d P(\omega) \int_{\Omega_1} \sha(\omega_1)
 d\shw(\omega_1) \\
 &=& \int_{\Omega_0} F(\omega) \d {\bf Q}(\omega_0)
 \int_{\Omega_0} \sha(\omega_1) \d {\bf Q}(\omega_0).
\end{eqnarray*}
This shows that $(W^1,\ldots,W^N)$ and $B$ are independent.
Taking $F = 1_\Omega$ in previous expression, the equality between the
left-hand side and the third term, shows that 
$B$ is a Brownian motion under $Q$.
\item{b)} Since for any $ 1\le i,j \le N$,
\begin{equation}\label{EIJ}
 [W^i,W^j]_t = \delta_{ij} t,  \ [W^i, B]= 0, \ [B,B]_t = t,
\end{equation}
 L\'evy's characterization theorem, implies that   
$(W^1,\dots,W^N, B)$ is a ${\bf Q}  $-Brownian motion.
\item{c)} An equivalent formulation to 1) 
in item a) of Definition \ref{DWeakStrong} is the following. 
For $P$ a.e.,  $\o \in  \Omega$, $Y(\cdot\;,\o)$ solves the $Q^\o$-martingale problem with respect to the (random) PDE operator
\begin{equation*}
L^\o_t f(\xi) = \frac{1}{2} \gamma^2 (t,\xi,\o) f''(\xi),
\end{equation*}
and initial distribution $x_0$. Indeed, we remark that the construction
can be performed on the canonical space $\Omega_1 = C([0,T];\R)$.
\end{description}
\end{rem}

\begin{prop} \label{PB1}
Let  $Y$ be a process  as in Definition \ref{DWeakStrong} a). 
We have the following.
\begin{enumerate}
\item $Y$ is a $(\shg_t)$-martingale
on the product space $(\Omega_0,\shg, {\bf Q})$.
\item $[Y, W^i] = 0, \ \forall 1 \le i \le N$. 
\end{enumerate}
\end{prop}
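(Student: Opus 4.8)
\textbf{Proof proposal for Proposition \ref{PB1}.}

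The plan is to exploit the defining properties of a weak-strong solution listed in Definition \ref{DWeakStrong} a), namely that for $P$-a.e.\ $\omega$ the process $Y(\cdot,\omega)$ solves the SDE \eqref{DWS} with respect to $Q^\omega$, driven by a $Q^\omega$-Brownian motion $B(\cdot,\omega)$, together with the filtration compatibility in item 2). For the martingale property, I would first observe that since $\gamma$ is progressively measurable and (implicitly, from the standing boundedness used throughout) we may assume the stochastic integral $\int_0^t \gamma(s,Y_s(\cdot,\omega),\omega)\,dB_s(\cdot,\omega)$ is a genuine $Q^\omega$-martingale for a.e.\ $\omega$ — if $\gamma$ is merely locally bounded one localizes and the statement is about local martingales, but in the situation of interest $\gamma = \Phi(X(\cdot))$ is bounded. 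Thus for a.e.\ $\omega$, $Y(\cdot,\omega)$ is a $Q^\omega$-martingale with respect to $(\shy_t)$. I would then lift this to the product space: for $0 \le s \le t$, $A \in \shy_s$, and $F \in \shf_s$, write
\begin{equation*}
E^{\bf Q}\bigl(1_{A}\,1_F\,(Y_t - Y_s)\bigr) = \int_\Omega 1_F(\omega)\, E^{Q^\omega}\bigl(1_A\,(Y_t(\cdot,\omega) - Y_s(\cdot,\omega))\bigr)\, dP(\omega) = 0,
\end{equation*}
using Fubini (justified by the integrability in Proposition \ref{P28}, or directly by boundedness of $\gamma$) and the $Q^\omega$-martingale property together with $A \in \shy_s$. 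Since sets of the form $A \times F$ with $A \in \shy_s$, $F \in \shf_s$ generate $\shg_s = \shy_s \vee (\{\emptyset,\Omega_1\}\otimes \shf_s)$ and form a $\pi$-system, this gives $E^{\bf Q}(Y_t - Y_s \mid \shg_s) = 0$, i.e.\ $Y$ is a $(\shg_t)$-martingale on $(\Omega_0,\shg,{\bf Q})$.

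For the bracket statement $[Y,W^i] = 0$, I would use Remark \ref{R2.10} b), which already records that $(W^1,\dots,W^N,B)$ is a ${\bf Q}$-Brownian motion, in particular $[W^i,B] = 0$. Since $Y_t = Y_0 + \int_0^t \gamma(s,Y_s,\cdot)\,dB_s$ on the product space (the integral against $B$ makes sense on $(\Omega_0,\shg,{\bf Q})$ because $B$ is a ${\bf Q}$-Brownian motion and $\gamma(\cdot,Y_\cdot,\cdot)$ is $(\shg_t)$-adapted and bounded), the bracket $[Y,W^i]$ equals $\int_0^\cdot \gamma(s,Y_s,\cdot)\,d[B,W^i]_s = 0$. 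Alternatively, and perhaps cleaner to present, one computes $[Y,W^i]$ fiberwise: for a.e.\ $\omega$, $[Y(\cdot,\omega),W^i]$ under $Q^\omega$ vanishes because $W^i$ is $\shf_T$-measurable hence (by the independence established in Remark \ref{R2.10} a)) independent of the Brownian motion $B(\cdot,\omega)$ driving $Y(\cdot,\omega)$; one then notes that the quadratic covariation, being a limit in probability of sums of products of increments, is computed the same way under ${\bf Q}$ as fiberwise under $Q^\omega$.

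The main obstacle — really the only subtle point — is making precise in what sense $Y$ satisfies an SDE on the product space $(\Omega_0,\shg,{\bf Q})$ rather than merely fiberwise under each $Q^\omega$, and correspondingly checking that $\gamma(\cdot,Y_\cdot,\cdot)$ is $(\shg_t)$-progressively measurable and that the stochastic integral $\int_0^\cdot \gamma\,dB$ built fiberwise agrees with the one built on the product space. This is handled by item 2) of Definition \ref{DWeakStrong} (which guarantees $W^1,\dots,W^N$, and by Remark \ref{R2.10} b) also $B$, are $(\shg_t)$-Brownian motions under ${\bf Q}$) together with item 3) (which ensures the relevant conditional-expectation random variables are $\shf_s$-measurable, so the fiberwise and global filtrations are compatible). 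Once these measurability bookkeeping matters are dispatched, both assertions follow from the fiberwise statements by the Fubini argument above; I would present the martingale part in detail and remark that the bracket part follows either from Remark \ref{R2.10} b) or by the same fiberwise-plus-Fubini reasoning.
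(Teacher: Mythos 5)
Your argument for item 1 is essentially the paper's: disintegrate $E^{\bf Q}$ over $\Omega$, use the fiberwise $Q^\omega$-martingale property of $Y(\cdot,\omega)$, and close with a $\pi$-system generating $\shg_s$ (the paper tests against $1_{F_s}G(Y_r,r\le s)$ with $G$ continuous rather than $1_A 1_F$, which is immaterial). Your remark that one needs boundedness of $\gamma$, or else a localization, to get a true rather than local martingale is a fair point the paper leaves implicit.

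For item 2 you diverge from the paper, and your primary route has a gap. The paper proves the single identity $E^{\bf Q}(Y_t W^i_t\, 1_{F_s}G(Y_r,r\le s)) = E^{\bf Q}(Y_s W^i_s\, 1_{F_s}G(Y_r,r\le s))$: the inner $Q^\omega$-expectation is handled by the fiberwise martingale property of $Y(\cdot,\omega)$ (note $W^i_t(\omega)$ is a constant under $Q^\omega$), and the outer $P$-expectation by the $(\shf_t)$-martingale property of $W^i$ together with Definition \ref{DWeakStrong} a) 3), which makes $\omega\mapsto E^{Q^\omega}(Y_s G)$ $\shf_s$-measurable. Hence $YW^i$ is a $(\shg_t)$-martingale, and since $Y$ and $W^i$ are both $(\shg_t)$-martingales this forces $[Y,W^i]=0$. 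Your route (a) instead writes $[Y,W^i]=\int_0^\cdot\gamma\, \d[B,W^i]$, which presupposes that the SDE holds as a stochastic integral on $(\Omega_0,\shg,{\bf Q})$ with respect to $(\shg_t)$ --- precisely the identification of the fiberwise It\^o integral with a product-space one that Definition \ref{DWeakStrong} does not provide and that the paper's argument is designed to circumvent; asserting it is ``handled by items 2) and 3)'' is not a proof. Your fallback route (b) is closer to salvageable, but the appeal to ``independence'' is off: under $Q^\omega$ the path of $W^i$ is a fixed deterministic continuous function, so what you actually need is that the covariation of a continuous $L^2$-martingale with a fixed continuous path of finite quadratic variation vanishes; this follows from the variance estimate $\sum_k E[(Y_{t_{k+1}}-Y_{t_k})^2](W^i_{t_{k+1}}-W^i_{t_k})^2\le \max_k E[(Y_{t_{k+1}}-Y_{t_k})^2]\cdot\sum_k (W^i_{t_{k+1}}-W^i_{t_k})^2\to 0$, not from Cauchy--Schwarz on increments or from independence. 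The paper's product-martingale argument sidesteps all of this by reusing the same disintegration computation as item 1, and I would recommend adopting it.
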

\begin{proof} Let $0 \le s < t \le T$, $F_s \in \shf_s$ and
 $G: C([0,s]) \rightarrow \R$ be continuous and bounded. 
We will  prove below  that, for $1 \le i \le N+1$,
setting $W^{N+1}_t = 1$, for all $t \ge 0$,
\begin{equation} \label{EB1}
E^{\bf Q}  (Y_t W^i_t  G(Y_r, r \le s) 1_{F_s} ) = E^{\bf Q} (Y_s W^i_s 1_{F_s} 
 G(Y_r, r \le s)).
\end{equation}
Then \eqref{EB1} with $i= N+1$ shows  item 1.
Considering \eqref{EB1} with $1 \le i \le N$,
shows that $Y W^i$ is a $(\shg_t)$-martingale, which shows
item 2.
Therefore, it remains to show \eqref{EB1}.\\
The left-hand side of that equality gives
\begin{eqnarray*}
\int_\Omega \d P(\omega) &&  W^i_t(\omega) 1_{F_s}(\omega)  E^{Q^\omega} 
\left(Y_t(\cdot,\omega)  G(Y_r(\cdot,\omega), r \le s) \right) \\
&=&
\int_\Omega \d P(\omega) 1_{F_s}(\omega) W^i_t(\omega) 
 E^{Q^\omega} \left(Y_s(\cdot,\omega) G(Y_r(\cdot,\omega), r \le s)\right),
\end{eqnarray*}
because $Y(\cdot, \omega)$ is a $Q^\omega$-martingale for $P$-almost all
 $\omega$.
To obtain the right-hand side of   \eqref{EB1}
 it is enough to remember that  $W^i$ are 
$(\shg_t)$-martingales and that item a) 3) in Definition \ref{DWeakStrong}
holds.
This concludes the proof of Proposition \ref{PB1}.
\end{proof}
\begin{rem} \label{RMargU}
Lemma \ref{LMargU} shows that, whenever weak-strong uniqueness holds, then the
 $\mu$-weighted marginal laws  of any weak solution $Y$ are uniquely determined.
\end{rem}

\section{The concept of doubly probabilistic representation}
\label{S3}



\subsection{The doubly stochastic non-linear diffusion.}
\label{S32}

\setcounter{equation}{0}

We come back to the notations and conventions of the introduction and of Section \ref{S2}.
Let $x_0$ be a probability on $\R$.
The  doubly probabilistic representation is based on the following idea.
Let  $Y: \Omega_0 \times [0,T] \rightarrow \R$ be a measurable process
where $\Omega_0 = \Omega_1 \times \Omega$
is the usual enlarged probability space as introduced in
Definition \ref{DSEPS}. Let ${\bf Q}  $ be a probability inherited from
a random kernel $Q^\omega$ as before Definition \ref{DSEPS}.
Let $ (\shg_t),$ where $(\shg_t)$ is some filtration on $(\Omega_0,\shg)$
such that $W^1, \ldots, W^N$ are $(\shg_t)$-Brownian motions on 
$(\Omega_0,\shg,{\bf Q})$.

Suppose that 
\begin{equation}
\label{DPIY}
\left \{
\begin{array}{ccc}
Y_t &=& Y_0 + \int_0^t \Phi(X(s,Y_s)) dB_s, \\
\mu-{\rm Weighted \ Law } (Y_t) &=& X(t,\xi) \d  \xi, \quad t \in ]0,T],\\
\mu-{\rm Weighted \ Law } (Y_0) &=& x_0(\d \xi), 
\end{array}
\right.
\end{equation}
where $B$ is a $Q$-standard Brownian motion with respect to $ (\shg_t).$
Then  $X$  solves the SPDE \eqref{PME}. This will be the object of Theorem \ref{T32}.
Vice versa, if $X$ is a solution of \eqref{PME} then there is a process $Y$
solving \eqref{DPIY}, see Theorem \ref{T73}.


\setcounter{equation}{0}

\begin{defi}\label{DDoubleStoch}
\begin{enumerate}
\item[1)] We say that the doubly stochastic non-linear diffusion (DSNLD)
 driven by $\Phi$ (on the space $(\O,\shf,P)$ with initial condition $x_0$,
 related to the random field $\mu$ (shortly (DSNLD)$(\Phi, \mu, x_0)$)
 admits {\bf weak existence} if there is 
a measurable  random field $X : [0,T]
 \times \R \times \O \to \R$ with the following properties.
\subitem a) 
The problem (DSDE)$(\gamma,x_0)$  with
 $\gamma(t,\xi,\omega) = \Phi(X(t,\xi,\omega))$
admits weak-strong existence.
\subitem b) $X = X(t,\xi, \cdot) \d \xi, t \in ]0,T]$,
 is the family of $\mu$-marginal weighted laws of $Y$, where
$Y$ is the solution of \eqref{DWS} in Definition \ref{DWeakStrong}.
 In other words
$X$ constitutes the densities of those $\mu$-marginal weighted laws.
\item[2)] A couple $(Y,X)$, such that $Y$ is a (weak-strong) solution to the \\
(DSDE)$(\gamma,x_0)$,
 is  called {\bf weak solution} to the (DSNLD)$(\Phi,\mu,x_0)$. 
 $Y$ is also called doubly stochastic representation of the random field $X$.
\item[3)]
Suppose that, given two measurable 
 random fields $X^i: [0,T] \times \R \times \O\to \R,
 i = 1,2$ on $(\O, \shf, P, (\shf_t))$,
  and $Y^i$, on extended probability space $(\Omega_0^i, {\bf Q}^i), i=1,2$,
such that $(Y^i, X^i)$ is a
  weak-strong solution  of 
(DSDE)$(\Phi (X^i),x_0), i =1,2$, where
we  always have that $(Y^1, W^1, \ldots, W^N)$ 
and   $(Y^2, W^1, \ldots, W^N)$ have the same law. Then 
we say that the (DSNLD)$(\Phi,\mu,x_0)$ admits
 {\bf weak uniqueness}.
\end{enumerate}
\end{defi}
\begin{rem} \label{RRRR}
If (DSNLD)$(\Phi,\mu,x_0)$ admits {\bf weak uniqueness} then 
the $\mu$-marginal weighted laws of $Y$
 are uniquely determined, $P$-a.s., see Lemma \ref{LMargU}.
\end{rem}
\begin{theo} \label{T32}
Let $(Y,X)$ be a solution of (DSNLD)$(\Phi,\mu,x_0)$. 
Then $X$
 is a solution to the SPDE \eqref{PME}.
\end{theo}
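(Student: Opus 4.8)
The plan is to apply It\^o's formula to $\varphi(Y_t)\she_t$, where $\she_t := \she_t(\int_0^\cdot \mu(\d s,Y_s))$ is the Dol\'eans exponential appearing in Definition \ref{mylau}, and then take the $Q^\omega$-expectation to recognize the left-hand side as $\int_\R X(t,\xi,\omega)\varphi(\xi)\,\d\xi$ by definition of the $\mu$-marginal weighted laws. First I would fix a test function $\varphi\in\shs(\R)$ with compact support and a realization $\omega\in\Omega$. Writing $\she_t = \exp\big(\sum_{i=0}^N\int_0^t e^i(Y_s)\,\d W^i_s - \frac12\sum_{i=1}^N\int_0^t e^i(Y_s)^2\,\d s\big)$ (recall $W^0_t=t$), It\^o's formula gives
\begin{equation*}
\d\big(\varphi(Y_t)\she_t\big) = \she_t\varphi'(Y_t)\Phi(X(t,Y_t))\,\d B_t + \tfrac12\she_t\varphi''(Y_t)\Phi^2(X(t,Y_t))\,\d t + \varphi(Y_t)\she_t\sum_{i=0}^N e^i(Y_t)\,\d W^i_t,
\end{equation*}
using that $[B,W^i]=0$ (Remark \ref{R2.10} b), so there is no cross term between the $\d B$ martingale and the $\she$ part), and that $[Y,Y]_t = \int_0^t\Phi^2(X(s,Y_s))\,\d s$ since $Y$ solves \eqref{DWS} with $\gamma=\Phi(X)$.

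Next I would integrate from $0$ to $t$ and take $E^{Q^\omega}$. The stochastic integral against $\d B$ is a genuine martingale under $Q^\omega$ (its integrand is bounded by $\|\varphi'\|_\infty\|\Phi\|_\infty\she_t$, which is in $L^2(Q^\omega)$ for a.e.\ $\omega$ by Proposition \ref{P28}), so it drops out. The term $\int_0^t\varphi(Y_s)\she_s\sum_{i=1}^N e^i(Y_s)\,\d W^i_s$ requires more care: here $W^i$ is a $(\shg_t)$-martingale on $(\Omega_0,\shg,{\bf Q})$ but not necessarily a $Q^\omega$-martingale, so its $Q^\omega$-expectation need not vanish. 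The clean way around this is to \emph{not} take $E^{Q^\omega}$ termwise but rather to first multiply by $1_F$ for $F\in\shf_t$ and take $E^{\bf Q}$, exploiting that $\she_t$ already absorbs the $i\ge1$ drift: more precisely, the correct computation is to apply It\^o only to the $\varphi(Y_t)$ and the $e^0$-part, keeping $M_t = \she_t(\sum_{i=1}^N\int_0^\cdot e^i(Y_s)\,\d W^i_s)$ (a ${\bf Q}$-martingale by Proposition \ref{P28} i) as a martingale weight, and observe $\she_t = M_t\exp(\int_0^t e^0(Y_s)\,\d s)$. Then $\d(\varphi(Y_t)\she_t) = M_t\,\d(\varphi(Y_t)e^{\int_0^t e^0}) + (\ldots)\,\d M_t$, and taking $E^{\bf Q}(1_F\,\cdot\,)$ for $F\in\shf_t$ kills the $\d M$ part because $M$ is a martingale orthogonal to the $\d B$ and $\d t$ pieces and $\shf_t\subset\shg_t$. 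What remains, after recognizing $E^{\bf Q}(1_F\varphi(Y_t)\she_t) = E^P(1_F\int_\R\varphi\,\d\Gamma_t^Y) = E^P(1_F\int_\R\varphi(\xi)X(t,\xi)\,\d\xi)$, is
\begin{equation*}
E^P\Big(1_F\!\int_\R\!\varphi(\xi)X(t,\xi)\,\d\xi\Big) = E^P\Big(1_F\!\int_\R\!\varphi\,\d x_0\Big) + \tfrac12 E^P\Big(1_F\!\int_0^t\!\!\d s\,E^{Q^\omega}\big(\she_s\varphi''(Y_s)\Phi^2(X(s,Y_s))\big)\Big) + E^P\Big(1_F\,\mathrm{(noise\ term)}\Big),
\end{equation*}
and here $\Phi^2(X(s,Y_s))$ combined with the weighted-law definition turns $E^{Q^\omega}(\she_s\varphi''(Y_s)\Phi^2(X(s,Y_s)))$ into $\int_\R\varphi''(\xi)\Phi^2(X(s,\xi))X(s,\xi)\,\d\xi = \int_\R\varphi''(\xi)\psi(X(s,\xi))\,\d\xi$, using $\psi(u)=\Phi^2(u)u$.

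For the noise term, the contribution of the $e^0$ drift inside $\she$ together with the $i\ge1$ part of $M$ must reassemble into $\int_{[0,t]\times\R}X(s,\xi)\varphi(\xi)\mu(\d s,\xi)\,\d\xi = \sum_{i=0}^N\int_0^t\big(\int_\R X(s,\xi)\varphi(\xi)e^i(\xi)\,\d\xi\big)\d W^i_s$; this is where I would be most careful, computing $\d(M_t \int_\R\varphi\,\d\Gamma^\omega_{\cdot})$-type brackets and using that $[\int_0^\cdot e^i(Y_s)\,\d W^i_s, \text{(the }\d B\text{ integral)}] = 0$ so the only surviving bracket between $Y$'s martingale part and $\she$ vanishes, leaving exactly $\varphi(Y_s)\she_s e^i(Y_s)\,\d W^i_s$, whose $E^{Q^\omega}$ gives $e^i(\xi)$ integrated against $\Gamma_s^Y$. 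The main obstacle is precisely this bookkeeping: one must pass expectations through the random kernel $Q^\omega$ in the right order (inside first for the Lebesgue-in-$s$ terms via Fubini, but keeping the $W^i$-stochastic integrals at the ${\bf Q}$-level since they are not $Q^\omega$-martingales), and justify all the integrability and martingale-property claims using Proposition \ref{P28} (iii) and (iv) and the boundedness of $\Phi$ and the $e^i$. Finally, since $\shf=\shf_T$ and $F\in\shf_t$ is arbitrary, stripping the $1_F$ and the $E^P$ yields \eqref{EDist} $P$-a.s.; the regularity requirements of Definition \ref{DSPDE} (items 1 and 2) follow from Proposition \ref{P28} (iv) (giving the $C([0,T];\shs'(\R))$ and total-variation bounds) and from the measurability of $Y$ and of the random kernel.
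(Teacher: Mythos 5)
Your overall strategy is the paper's: apply It\^o's formula to $\varphi(Y_t)Z_t$ with $Z_t=\she_t\bigl(\int_0^\cdot\mu(\d s,Y_s)\bigr)$, use $[Y,W^i]=0$ (Proposition \ref{PB1}) to discard the cross bracket, and then pass to the $\mu$-weighted laws. You also correctly isolate the one genuinely delicate point, namely that $\int_0^t\varphi(Y_s)Z_s e^i(Y_s)\,\d W^i_s$ for $i\ge 1$ is not a $Q^\omega$-martingale. But your proposed resolution of that point is wrong. You claim that testing against $1_F$, $F\in\shf_t$, and taking $E^{\bf Q}$ ``kills the $\d M$ part because $M$ is a martingale \dots and $\shf_t\subset\shg_t$.'' This is false: $E^{\bf Q}\bigl(1_F\int_0^tK_s\,\d M_s\bigr)=0$ only for $F\in\shf_0$ (or $\shg_0$), not for $F\in\shf_t$. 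For $F\in\shf_t$ the martingale representation of $1_F$ is driven by the same $W^i$ that drive $M$, so the product has a nonvanishing bracket. Indeed it \emph{must} be nonzero: that term is precisely the source of the stochastic part $\sum_{i=1}^N\int_0^t\bigl(\int_\R X(s,\xi)\varphi(\xi)e^i(\xi)\,\d\xi\bigr)\d W^i_s$ of \eqref{EDist}. As written, your computation would only recover the $e^0$ (Lebesgue) contribution to the noise term and would prove that $X$ solves the wrong equation; your later remark that the $i\ge1$ parts ``must reassemble'' into $\int X\varphi\,\mu(\d s,\xi)\d\xi$ contradicts the mechanism you set up to kill them.

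The correct treatment, which is the content of Lemma \ref{Lmulaw} in the paper, is to keep the $Q^\omega$-expectation and commute it with the $\d W^i$ integral:
\begin{equation*}
E^{Q^\omega}\Bigl(\int_0^t\varphi(Y_s)Z_s e^i(Y_s)\,\d W^i_s\Bigr)
=\int_0^t \d W^i_s(\omega)\int_\R\varphi(\xi)e^i(\xi)X(s,\xi,\omega)\,\d\xi .
\end{equation*}
This is legitimate because under $Q^\omega$ the path $W^i(\omega)$ is deterministic, so the It\^o integral can be approximated by Lebesgue integrals (Theorem 2 of \cite{RVSem}) and a Fubini argument applies; the $\shf_s$-measurability of $\omega\mapsto E^{Q^\omega}(\cdot)$ from Definition \ref{DWeakStrong} a)~3) keeps the resulting integrand adapted. (A rigorous version of your $1_F$-testing idea does exist --- represent $1_F=E^P(1_F)+\sum_j\int_0^t\lambda^j_s\,\d W^j_s$ and use the It\^o isometry --- but it lands on the same interchange, not on the vanishing of the term.) The rest of your outline (the $\d B$-martingale vanishing in expectation, $\psi(u)=\Phi^2(u)u$ converting the second-order term, and Proposition \ref{P28} iv) for the regularity items of Definition \ref{DSPDE}) matches the paper and is fine.
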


\begin{rem} \label{RDPIY}
\begin{enumerate}
\item Let $t \in [0,T]$. Let $\varphi: \R \rightarrow \R$ be
Borel and bounded. Then
$$ \int_\R \varphi(\xi)  X(t,\xi,\omega) \d \xi = 
 E^{Q^\omega}\left(\varphi(Y_t(\omega)) \she_t\left(\int_0^\cdot \mu(ds, Y_s(\omega))\right)
\right).$$
So 
$$ \int_\R   X(t,\xi,\omega) d\xi = 
 E^{Q^\omega} \left(\she_t \left(\int_0^\cdot \mu(ds, Y_s(\omega))\right)
\right).$$
Even though  for a.e. $\omega \in \Omega$,  the   previous expression
is not necessarily a probability measure, of course, 
$$ \nu_\omega: \varphi \mapsto \frac{\int_\R \varphi(\xi) 
 X(t,\xi,\omega) d\xi}
{\int_\R  X(t,\xi,\omega) d\xi } $$
is one. It can be expressed as 
$$\nu_\omega(A) =  \frac{E^{Q^\omega}(1_A(Y_t) \she_t(M(\cdot, \omega)))}
{E^{Q^\omega} \she_t(M(\cdot,\omega))}, $$
where
$M_t(\cdot,\omega) = \int_0^t \mu(ds, Y_s(\cdot, \omega)), t \in [0,T],$
 is defined in \eqref{1.2bis}.
\item Consider the particular case $e_0 = 0, e_1 = c$, 
$c$ being some constant. In this case, the $\mu$-marginal laws are given by
\begin{equation*}
A \mapsto  E^{Q^\omega}(1_A(Y_t) c \she_t(W)) = 
 c \she_t(W) E^{Q^\omega}(1_A(Y_t)) =
 c \she_t(W) \nu_\omega(t,A)
\end{equation*}
and $\nu_\omega(t,\cdot)$ is the law of $Y_t(\cdot,\omega)$ under 
$Q^\omega$.
\end{enumerate}
\end{rem}

\begin{proof}

Let $B$ denote the Brownian motion associated to $Y$ as 
a solution   to  (DSDE)$(\gamma,x_0)$,
mentioned in item a)1)  of Definition \ref{DDoubleStoch}.
For $t \in [0,T]$, we set
$$
Z_t = \she_t\(\int^\cdot_0 \mu (\d s,Y_s)\), \quad
M_t = Z_t \exp \left(-\int_0^t e^0(Y_s) \d s \right), \ t\in [0,T]. 
$$

\begin{enumerate} 
\item Proof of  Definition \ref{DSPDE} 1. \
By Proposition \ref{P28}, $(M_t, t \in [0,T])$ is a uniformly
integrable martingale. Consequently $t \mapsto Z_t$ is continuous
in $L^1(\Omega_0, {\bf Q})$. On the other hand the process $Y$ is continuous. 
This implies that $P$ a.e. $\omega \in \Omega$, $X \in C([0,T]; \shm(\R))$,
where $\shm(\R)$ is equipped with the weak topology.
This  implies that   $X \in C([0,T]; \shs'(\R))$.
Furthermore, for $P$ a.e. $\omega \in \Omega$, and $t\in ]0,T]$,
$X(t,\cdot, \omega) \in L^1(\R)$ and 
$\int_\R X(t,\xi,\omega) \d \xi = \Vert \Gamma(t,\cdot,\omega)\Vert_{\rm var}$.
By  Proposition \ref{P28} iv), it follows that $P$-a.s.
$ X(\cdot,\cdot,\omega)  \in L^\infty([0,T];L^1(\R)) \subset L^2([0,T];L^1_{\rm loc}(\R)).$ 
\item  Definition \ref{DSPDE} 2. follows from Remark \ref{RDPIY} 1) and Definition \ref{DWeakStrong} a) 3).
\item Proof of  Definition \ref{DSPDE} 3. \
Let $\varphi\in \shs(\R)$ with compact support.
Taking into account Proposition \ref{PB1}, we apply It\^o's formula to get
\begin{align*}
&\varphi(Y_t) Z_t = \varphi(Y_0)+\int^t_0\varphi' (Y_s) Z_s \d Y_s
+\int^t_0 \varphi(Y_s) Z_s\(\mu (\d s,Y_s) -\frac12 \sum^N_{i=1} (e^i(Y_s))^2 \d s \)\\
&+\frac12 \int^t_0 \varphi''(Y_s) \Phi^2 (X(s,Y_s)) Z_s \d s
+\frac12 \int^t_0 \varphi(Y_s) Z_s \left( \sum_{i=1}^N (e^i (Y_s))^2 \right) \d s.
\end{align*}
Indeed we remark that
$ \int^t_0 \varphi' (Y_s) \d [Z,Y]_s = 0,$
because \\
$[Z,Y]_t = \sum_{i=1}^N \int^t_0 e^i (Y_s) Z_s  \d [W^i,Y]_s = 0;$
in fact $[W^i,Y]=0$ by Proposition \ref{PB1}.
So
\begin{eqnarray*}
 \varphi (Y_t) Z_t &=& \varphi(Y_0) + \int^t_0 \varphi' (Y_s) Z_s \Phi (X(s,Y_s)) \d B_s\\
 &+& \int^t_0 \varphi (Y_s) Z_s \mu (\d s, Y_s)
+\frac12 \int^t_0 \varphi'' (Y_s) \Phi^2 (X(s,Y_s)) Z_s \d s.
\end{eqnarray*}
Taking the expectation with respect to $Q^\o$ we get $\pas$,
\begin{align*}
\int_\R \d \xi \varphi (\xi) X(t,\xi) &= \int_\R \varphi(\xi)x_0(\d\xi) +
\sum_{i=0}^N \int^t_0 \d W_s^i
 \( \int_\R \d \xi \varphi(\xi) e^i (\xi) X(s,\xi)\)\\
&+ \frac12 \int_0^t \d s \int_\R \d \xi \varphi'' (\xi) \Phi^2 (X(s,\xi)) X(s,\xi),
\end{align*}
which implies the result. Indeed, in the previous equality,  we have used  
Lemma \ref{Lmulaw} below.
\end{enumerate}
\end{proof}

\begin{lemma} \label{Lmulaw}
Let $ 1 \le i \le N$.
For $P$ a.e. $\omega \in \Omega$, we have
$$ E^{Q^\omega}\left( \int^t_0 \varphi (Y_s) Z_s e^i(Y_s) \d W^i_s \right)
(\cdot,\omega)  
= \int_0^t \d W^i_s(\omega) \int_\R \varphi(\xi) e^i(\xi) X(s,\xi,\omega) \d \xi.
$$
\end{lemma}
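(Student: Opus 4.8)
The plan is to establish the identity by passing from a stochastic integral driven by $W^i$ \emph{inside} the conditional expectation $E^{Q^\omega}$ to a stochastic integral driven by $W^i$ \emph{outside} it. The fundamental point is that, under the structure set up in Definition \ref{DWeakStrong}, the driving Brownian motions $W^1,\dots,W^N$ live on the $\Omega$-factor only, so integration against $dW^i_s$ commutes with $E^{Q^\omega}(\cdot)$, which is an integration over the $\Omega_1$-factor. I would first reduce to simple integrands: approximate the progressively measurable integrand $s \mapsto \varphi(Y_s)Z_s e^i(Y_s)$ by elementary (piecewise constant in $s$) processes of the form $\sum_k H_{t_k} \mathbf 1_{]t_k,t_{k+1}]}(s)$, where each $H_{t_k}$ is $\shg_{t_k}$-measurable and bounded (a localization/truncation argument handles unboundedness, using the $L^2$-integrability of $M$ from Proposition \ref{P28} iii) together with the boundedness of $\varphi$ and $e^i$). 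For such a simple integrand the stochastic integral is a finite sum $\sum_k H_{t_k}(W^i_{t_{k+1}} - W^i_{t_k})$, and applying $E^{Q^\omega}$ term by term gives $\sum_k E^{Q^\omega}(H_{t_k})\,(W^i_{t_{k+1}}(\omega) - W^i_{t_k}(\omega))$, since $W^i_{t_{k+1}} - W^i_{t_k}$ depends only on $\omega$ and may be pulled out of $E^{Q^\omega}$.

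\textbf{Second step: identifying the limit.} The right-hand side of the last display is precisely an elementary stochastic integral with respect to $W^i(\omega)$ whose integrand is the $\shf_{t_k}$-measurable random variable $\omega \mapsto E^{Q^\omega}(H_{t_k})$; here I use Definition \ref{DWeakStrong} a) 3) (together with the extension in Definition \ref{DDoubleStoch}), which guarantees that conditional expectations of functionals of $(Y_r, r \le t_k)$ against the weight $Z_{t_k}$ are $\shf_{t_k}$-measurable, so that this integrand is adapted and the outer stochastic integral makes sense. One then passes to the limit on both sides: on the left, the inner integrals $\int_0^t H^n_s\,dW^i_s$ converge in $L^2(\Omega_0,{\bf Q})$ to $\int_0^t \varphi(Y_s)Z_s e^i(Y_s)\,dW^i_s$ by the It\^o isometry, and taking $E^{Q^\omega}$ is an $L^1(\Omega_0,{\bf Q})$-contraction after integrating in $\omega$, hence one extracts a subsequence converging $P$-a.s.; on the right, the integrands $\omega \mapsto E^{Q^\omega}(H^n_{s})$ converge in the appropriate $L^2(dP \otimes ds)$ sense to $\omega \mapsto \int_\R \varphi(\xi)e^i(\xi) X(s,\xi,\omega)\,d\xi$ by the very definition \eqref{1.2ter} of the $\mu$-marginal weighted laws, so the outer stochastic integrals converge as well. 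Matching the limits yields the claimed equality for $P$-a.e. $\omega$.

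\textbf{The main obstacle} I anticipate is the measurability bookkeeping needed to make ``$E^{Q^\omega}$ commutes with $\int \cdot\, dW^i$'' rigorous: one must be careful that the null set of ``bad'' $\omega$ can be chosen uniformly, i.e. not depending on the partition or on the approximating sequence, which is why the countable-dense-approximation argument (as already used in the proof of Lemma \ref{LMargU}) is the right device — one fixes a countable family of simple integrands, gets the identity off a single $P$-null set, and then upgrades by density and continuity in $t$. A secondary technical nuisance is the unboundedness of $Z_s$ (and hence of the integrand): this is controlled by stopping at $\tau_n = \inf\{t : Z_t \ge n\}$ or by a direct truncation $Z_s \wedge n$, using Proposition \ref{P28} iii) to see that the errors vanish in $L^1$ as $n \to \infty$, uniformly enough to survive taking $E^P E^{Q^\omega}(\cdot)$. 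Apart from these points the argument is the standard ``conditional Fubini for It\^o integrals'' and I do not expect further surprises.
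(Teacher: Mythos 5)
Your argument is correct, and it accomplishes the same ``conditional Fubini for It\^o integrals'' that the paper needs, but via a different approximation device. The paper's (very compressed) proof approximates the It\^o integral by the regularized Lebesgue integral $\int_0^t \varphi(Y_s)Z_s e^i(Y_s)\frac{W^i_{s+\varepsilon}-W^i_s}{\varepsilon}\,\d s$ (Theorem 2 of \cite{RVSem}, the same tool already invoked in the proof of Lemma \ref{LMargU}), so that the interchange with $E^{Q^\omega}$ becomes an application of the classical Fubini theorem for Lebesgue integrals, and then passes to the limit in $\varepsilon$; the complete details are deferred to \cite{Bar-Roc-Rus-2200}. You instead discretize in time: for simple integrands the interchange is a finite-sum identity using that the increments $W^i_{t_{k+1}}-W^i_{t_k}$ depend on $\omega$ only and hence factor out of $E^{Q^\omega}$, and the limit is taken via the It\^o isometry on the left and the definition \eqref{1.2ter} of the $\mu$-marginal weighted laws (applied at the partition points $\eta_n(s)$) on the right. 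Your route is more elementary and self-contained, at the price of having to verify convergence of both sides along the partition; the paper's route outsources the analytic work to the regularization lemma. Two points you flag deserve the emphasis you give them and are genuinely the crux in either approach: (i) the $\shf_{t_k}$-measurability of $\omega\mapsto E^{Q^\omega}(H_{t_k})$, which does not follow verbatim from Definition \ref{DWeakStrong} a) 3) because $Z_{t_k}$ is not a continuous functional of $Y$ alone --- one must combine a) 3) with the regularization/continuous-modification argument of Lemma \ref{LMargU} to handle the stochastic-integral weight; and (ii) choosing the exceptional $P$-null set uniformly over the approximating sequence, for which your countable-family device is the right fix. With those two points carried out, your proof is complete.
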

\begin{proof}
Since the Brownian motions $W^i$ are not random for $Q^\omega$, it is 
possible to justify the 
permutation of the stochastic integral with respect to $W^i$
and $E^{Q^\omega}$ by a Fubini argument  approximating the
stochastic integrals via Lebesgue integral, see e.g. Theorem 2 of
\cite{RVSem}. A complete proof is given in \cite{Bar-Roc-Rus-2200}. 
\end{proof}

\subsection{Filtering interpretation}

\label{SFiltering}

Item 1. of Remark \ref{RDPIY} has an interpretation in the framework of 
filtering theory, see e.g. \cite{pardoux} for a comprehensive introduction
on that subject. \\
Suppose $e^0 = 0$.
Let $\hat{\bf Q}$ be a probability on some probability space $(\Omega_0, \shg_T)$,
and consider the non-linear diffusion problem \eqref{E1.2}
as a basic dynamical phenomenon. We suppose now  
that there are $N$ observations $Y^1,\ldots,Y^N$ related to  the process $Y$
generating a filtration $(\shf_t)$.
We suppose in particular 
that $\d Y^i_t = \d W^i_t + e^i(Y_t) \d t, 1 \le i \le N,$
and  $W^1, \ldots, W^N$ be $(\shf_t)$-Brownian motions.
Consider the following dynamical system of non-linear diffusion type:
\begin{equation}\label{EFiltering1}
\left \{
 \begin{array}{ccc}
Y_t &=& Y_0 + \int_0^t \Phi(X(s,Y_s))dB_s \\
\d Y^i_t &=& \d W^i_t + e^i(Y_t) \d t, 1 \le i \le N,\\
X(t,\cdot)&:& {\rm conditional \ law \ of} \ Y_t \  {\rm under} \ \shf_t.  
\end{array}
\right.
\end{equation}
The third equality of \eqref{EFiltering1} means, under $\hat  {\bf Q} $,
 that we have,
\begin{equation} \label{EFiltering2}
\int_\R \varphi(\xi) X(t,\xi) \d \xi = 
E(\varphi(Y_t) \vert \shf_t).
\end{equation}
We remark that, under the new probability $Q$
defined by $\d {\bf Q} = \d \hat {\bf Q} \she(\int_0^T \mu(\d s, Y_s))$,
 $Y^1, \ldots, Y^N$ are standard $(\shf_t)$-independent Brownian motions.
Then \eqref{EFiltering2} becomes
$$
\int_\R \varphi(\xi) X(t,\xi) \d \xi = E^{\hat {\bf Q}}(\varphi(Y_t) \vert \shf_t) 
=  \frac{E^{{\bf Q}}(\varphi(Y_t)   \she_t (\int_0^\cdot \mu(ds, Y_s) \vert \shf_t ))}
{E^{{\bf Q}}(\she_t (\int_0^\cdot \mu(ds, Y_s) \vert \shf_t))}.
$$
Consequently, by Theorem \ref{T32}, $X$ will be the solution 
of the SPDE \eqref{PME}, with $x_0$ being the law of $Y_0$; so  
\eqref{PME} constitutes
the Zakai type equation associated with our filtering problem.

\section{The densities of the $\mu$-marginal weighted laws}

\label{S4}

\setcounter{equation}{0}

This section constitutes an important step towards the doubly probabilistic representation
of a solution to \eqref{PME}, when $\psi$ is non-degenerate. 
Let $x_0$ be a fixed probability on $\R$.
We recall that a process $Y$ (on a suitable enlarged probability space $(\O_0, \shg, {\bf Q})$), which 
is a weak solution to the (DSNLD)$(\Phi,\mu,x_0)$, is in particular a weak-strong solution of a (DSDE)$(\gamma,x_0)$ 
where $\gamma:[0,T] \times \R \times \O \rightarrow \R$ is some suitable 
progressively measurable random field on $(\O,\shf,P)$.
The aim of this section is twofold.
\begin{enumerate}
\item[A)] To show that whenever $\gamma$ is a.s. bounded and non-degenerate, (DSDE)$(\gamma,x_0)$ admit weak-strong existence and uniqueness.
\item[B)] The marginal $\mu$-laws of the solution to (DSDE)$(\gamma,x_0)$ 
admit a density for $P \; \o$ a.s.
\item[A)] We start discussing well-posedness.
\end{enumerate}
\begin{prop} \label{P4.1}
We suppose the existence of random variables $A_1, A_2$ such that
\begin{equation} \label{he4.2}
0< A_1 (\o) \leq \gamma (t,\xi,\o) \leq A_2 (\o), \quad 
\forall (t,\xi) \in [0,T] \times \R,     \quad \pas
\end{equation}
Then (DSDE)$(\gamma,x_0)$ admits weak-strong existence and uniqueness.
\end{prop}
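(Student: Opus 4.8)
The plan is to fix a realization $\o \in \O$ outside a $P$-null set and work with the deterministic (in $\o$) coefficient $\gamma(\cdot,\cdot,\o)$, which by \eqref{he4.2} is a bounded, strictly positive, measurable function on $[0,T]\times\R$. For such a coefficient the one-dimensional martingale problem associated with the operator $L^\o_t f(\xi) = \tfrac12 \gamma^2(t,\xi,\o) f''(\xi)$ with initial law $x_0$ is well posed: existence and uniqueness of a weak solution $Y(\cdot,\o)$ to $dY_t = \gamma(t,Y_t,\o)\,dB_t$, $\mathrm{Law}(Y_0)=x_0$, follows from the classical Stroock--Varadhan theory in dimension one (boundedness and non-degeneracy of $\gamma^2$ suffice; no continuity in the space variable is needed because in $d=1$ one can use the Engelbert--Schmidt / time-change criterion, or simply the one-dimensional well-posedness results for bounded measurable non-degenerate diffusion coefficients). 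This gives, for $P$-a.e.\ $\o$, a probability $Q^\o$ on the canonical space $\O_1 = C([0,T];\R)$ under which the canonical process $Y(\cdot,\o)$ solves \eqref{DWS}.

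The first real task is measurability: I must check that the map $\o \mapsto Q^\o$ is a \emph{random probability kernel} in the sense required before Definition \ref{DSEPS}, i.e.\ that $\o \mapsto Q^\o(A)$ is $\shf$-measurable for every Borel $A \subset \O_1$. Here I would invoke a measurable-selection argument: since $\gamma$ is jointly measurable in $(t,\xi,\o)$ and for each $\o$ the martingale problem has a \emph{unique} solution, the solution map $\o \mapsto Q^\o$ is measurable — this is a standard consequence of the fact that the set of solutions to a martingale problem depends measurably on the coefficients and that a measurable map into a Polish space with single-valued image is measurable (Stroock--Varadhan, or Jacod's results on the measurable dependence of martingale-problem solutions). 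Composing with $P$ yields the probability ${\bf Q}$ on $(\O_0,\shg)$, and we set $\O_1 = C([0,T];\R)$, $B$ the Brownian motion constructed on a further canonical extension (or obtained from $Y$ and $\gamma$ via $B_t = \int_0^t \gamma^{-1}(s,Y_s,\o)\,dY_s$, which is legitimate because $\gamma$ is bounded below).

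Next I verify conditions 2) and 3) of Definition \ref{DWeakStrong} a). Condition 3) — that $\o \mapsto E^{Q^\o}(\sha(Y_r, r\le s))$ is $\shf_s$-measurable — is immediate once the kernel is measurable, provided $\gamma$ restricted to $[0,s]$ depends only on $\shf_s$, which holds since $\gamma(t,\xi,\cdot)$ is $(\shf_t)$-progressively measurable. Condition 2) — that $W^1,\dots,W^N$ remain $(\shg_t)$-Brownian motions on the product, with $\shg_t = \shy_t \vee (\{\emptyset,\O_1\}\otimes\shf_t)$ — follows because the kernel construction does not touch the $W^i$'s: for fixed $\o$ the $W^i$ are deterministic, so their law under ${\bf Q}$ equals their law under $P$, and the independence of the $\O_1$-fibre from $\shf$ gives the martingale property with respect to the enlarged filtration (this is exactly the kind of computation carried out in Remark \ref{R2.10} a)). Finally, \textbf{weak-strong uniqueness}: given two such constructions $(Y,Q^\o)$ and $(\tilde Y,\tilde Q^\o)$, for $P$-a.e.\ $\o$ the laws of $Y(\cdot,\o)$ and $\tilde Y(\cdot,\o)$ coincide by the pathwise uniqueness of the one-dimensional martingale problem; since the $W^i$ have the same (deterministic, $\o$-wise) values in both, the joint law of $(Y,W^1,\dots,W^N)$ under ${\bf Q}$ equals that of $(\tilde Y, W^1,\dots,W^N)$ under $\tilde{\bf Q}$, which is the required conclusion.

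The main obstacle I anticipate is the measurable-selection / measurability-of-the-kernel step: proving cleanly that $\o \mapsto Q^\o$ is $\shf$-measurable, with the correct progressive-measurability refinement needed for condition 3), is the delicate point, whereas the $\o$-wise existence and uniqueness are classical and the behaviour of the $W^i$'s under the enlargement is routine bookkeeping.
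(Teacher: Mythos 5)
Your plan is correct in its overall architecture, and the uniqueness argument (Stroock--Varadhan, Exercise 7.3.3, applied $\o$-wise), the construction of $B$ via $B_t=\int_0^t\gamma^{-1}(s,Y_s,\o)\,\d Y_s$, and the verification of conditions 2) and 3) of Definition \ref{DWeakStrong} all coincide with what the paper does. Where you genuinely diverge is on the existence step, which is exactly the point you flag as delicate. You propose to solve the martingale problem for each fixed $\o$ and then recover measurability of $\o\mapsto Q^\o$ by an abstract measurable-selection theorem (Stroock--Varadhan Chapter 12 / Jacod), exploiting that uniqueness makes the solution set a singleton. The paper instead avoids any selection theorem: it mollifies $\gamma$ in the space variable to get Lipschitz coefficients $\gamma_n$, builds \emph{strong} solutions $Y^n$ on a product space by Picard iteration (so the kernels $Q_n(\cdot,\o)=\mathrm{Law}(Y^n(\cdot,\o))$ are measurable by construction), and then shows via a stability result for one-dimensional SDEs with bounded non-degenerate measurable coefficients (Proposition \ref{A4}) that $Q_n(\cdot,\o)$ converges weakly for $P$-a.e.\ $\o$; the limit kernel is measurable as an a.s.\ limit of measurable kernels. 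The approximation route buys two things that your route must work harder for: first, the measurability of the kernel comes for free rather than through a selection theorem whose hypotheses (measurable dependence of the coefficient field on $\o$ in the right topology, single-valuedness) need checking; second, and more importantly, the $\shf_s$-measurability of $\o\mapsto E^{Q^\o}(\sha(Y_r,r\le s))$ required for condition 3) is not automatic from bare $\shf$-measurability of the kernel --- one needs a version of the selection argument that respects the filtration, i.e.\ that $Q^\o$ restricted to $\shy^1_s$ depends only on $\gamma\rs{[0,s]}$ and hence is $\shf_s$-measurable. Your sentence asserting this is plausible but is precisely where the abstract approach needs a careful justification (well-posedness of the stopped martingale problem up to time $s$, say), whereas in the paper's construction it follows from the explicit form of the approximants. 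So: correct plan, different and somewhat riskier route; if you carry it out, make the filtration-adapted version of the selection step explicit.
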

\begin{proof}
\textit{Uniqueness.} This is the easy part. Let $Y$ and $\wt Y$ be two solutions. Then for $\o$ outside a $P$-null set $N_0, Y(\cdot\;, \o)$ and $\wt Y (\cdot\;, \o)$ are solutions to the same one-dimensional classical SDE with measurable bounded and non-degenerate (i.e. greater than a strictly positive constant) 
coefficients. Then, by Exercise 7.3.3
of  \cite{sv} the law of $Y(\cdot\;,\o)$ equals the law of $\wt Y(\cdot\;,\o)$. Then obviously the law of $Y$ equals the law of $\wt Y$.

\medskip
\textit{Existence.} This point is more delicate. In fact one needs 
to solve the random SDE for $P$ almost all $\o$  but in such
a way that the solution produces bimeasurable processes $Y$ and $B$.

First we regularize the coefficient $\gamma$. Let $\phi$ be a mollifier
 with compact support; we set
$\phi_n (x) = n\phi (nx), \; x\in \R \; , \; n\in \N.$
We consider the random fields $\gamma_n : [0,T]
 \times \R \times \O \to \R$ by 
 $\gamma_n (t,x,\o) := \int_\R 
\gamma (t,x-y,\o) \phi_n (y) \d y$. 
\\
Let $(\wt \O_1, \wt \shh_1,\wt P)$ be a probability space where
 we can construct a random variable $Y_0$ distributed according to $x_0$ 
and an independent Brownian motion $B$.

In this way on $({\wt \O}_1 \times \O, \wt \shh_1 \otimes \shf, \wt P \otimes P)$ we dispose of a random variable $Y_0$ and a Brownian motion independent of
 $\{\emptyset,{\wt \O}_1\} \otimes \shf$. By usual fixed point techniques, it is possible to exhibit a (strong) 
solution of (DSDE)$(\gamma_n,x_0)$ on the 
over mentioned product probability space.
We can show that there is a unique solution  $Y = Y^n$ of
$Y_t  = Y_0 + \int^t_0 \gamma_n (s,Y_s,\cdot) \d B_s. $
In fact, the maps 
$\Gamma_n : Z \mapsto \int_0^\cdot \gamma_n (s,Z_s,\o) \d B_s + Y_0,$
where $\Gamma_n : L^2(\wt \O_1 \times \O; \; \wt P \otimes P) \to L^2 (\wt \O_1 \times \O, \wt P \otimes P)$ are Lipschitz; by usual Picard fixed point arguments one can show the existence of a unique solution $Z=Z^n$ in $L^2(\wt \O_1 \times \O; \; \wt P \otimes P)$.
We observe that, by usual regularization arguments for It\^o integral
 as in Lemma \ref{Lmulaw},
for $\omega$-a.s.,
$Y(\cdot,\omega)$ solves for $P$ a.e. $\omega \in \Omega$, equation
\begin{equation}\label{e4.1}
Y_t (\cdot\;,\o) = Y_0 + \int^t_0 \gamma_n (s,Y_s (\cdot\;,\o),\o) \d B_s,
\end{equation}
on $(\wt \O_1, \wt \shh_1,\wt P)$.
We consider now the measurable space $\O_0 = \O_1 \times \O$, where 
$\O_1 = C([0,T], \R),$ 
equipped with product $\sigma$-field $\shg = \shb(\Omega_1) \otimes \shf$.
On that measurable space, we introduce  the
 probability measures ${\bf Q}_n$ where 
${\bf Q}_n(\d \o_1, \o) =  Q_n (\d \o_1, \o) P(\d \o)$
and
 $Q_n (\cdot, \o)$ being the law of $Y^n(\cdot\;,\o)$ for almost all fixed $\o$.
\\
We set $ Y_t(\o_1,\o)=\o_1(t)  $,
where $\omega_1 \in C([0,T];\R)$.
We denote by $( \shy_t, \; t\in[0,T])$ (resp. $(\shy^1_t)$)
 the canonical 
 filtration associated with $Y$ on $\Omega_0$ (resp. $\Omega_1$).
 The next step will be the following.

 \begin{lemma} \label{Lconv}
For almost all $\o  \ dP$ a.s.
$Q_n (\o,\;\cdot)$ converges weakly to $Q (\o, \cdot)$, where under
 $Q(\cdot, \o), \; Y(\cdot\;, \o)$ solves the SDE
\begin{equation}\label{e4.2}
Y_t(\cdot\;, \o) = Y_{0} + \int^t_0 \gamma (s,Y_s(\cdot\;, \o),\o) 
\d B_s(\cdot,\omega),
\end{equation}
where $B(\cdot, \omega)$ is an $(\shy^1_t)$-Brownian motion 
on $\Omega_1$.
\end{lemma}
\begin{proof} \
It follows directly from Proposition \ref{A4} of the Appendix.
\end{proof}
This shows the validity of 1) if Definition \ref{DWeakStrong} a).

\begin{rem} \label{RConv1}
\begin{enumerate}
\item[1)]
 Since $Q_n (\cdot, \o)$ converges weakly to $Q (\cdot, \o)$,
 $\o \ dP$ a.s., then the limit (up to an obvious modification) is a measurable random kernel.
\item[2)] This also implies that $Y_n(\cdot, \o)$ converges stably to
 $Q (\cdot, \o)$. 
For details about the stable convergence the reader can consult 
\cite[section VIII 5. c]{jacod} and the recent monograph \cite{Hausler}.
\end{enumerate}
\end{rem}
The considerations above allow to complete the proof of Proposition \ref{P4.1}.
By Lemma \ref{Lconv}, $Q^\omega  = Q(\cdot,\omega)$ is
a random kernel, being a limit of random kernels.
Let us consider  the associated probability measure on the suitable
enlarged probability space $(\Omega_0,\shg,Q)$.
 We observe that $Y$ on $(\O_0,\shg)$
 is obviously measurable, because it is the canonical process $Y(\o_1,\o)=\o_1$.
Setting
\begin{equation*}
B_t(\cdot, \o)  = \int_0^t \frac{\d Y_s(\cdot,\omega)}{\gamma (s, Y_s(\cdot,\omega),\o)} , 
\end{equation*}
we get $[B]_t(\cdot, \o) =t$ under $Q(\cdot\;, \o)$, so,
by L\'evy characterization theorem,  it is a Brownian motion.
 Moreover $B$ is bimeasurable. \\
Let $G= \sha(Y_r(\cdot,\omega), r \in [0,s])$, where $\sha$ is a bounded
functional $C([0,s]) \rightarrow \R$.
We first observe that the r.v. 
$\omega \mapsto E^{Q^\omega}(G)$
is $\shf_s$-measurable. 
This happens because $Y$ is, under $Q^\omega$, 
a martingale with quadratic variation \\
$\left(\int_0^t \gamma^2(s, Y_s(\cdot, \omega),\omega) \d s, 0 \le t \le 
T\right)$, 
i.e. with (random) coefficient which is $(\shf_t)$-progressively measurable. 
This shows item 3) of Definition \ref{DWeakStrong} a).

The last point to check is that 
$W^1,\ldots, W^N$ are $(\shg_t)$-martingales,
where $\shg_t =  \shy_t \vee (\{\emptyset, \Omega_1\} \otimes \shf_t), \ 0 \le t \le T$, i.e. item 2) of Definition \ref{DWeakStrong}.
 \\
Indeed, we justify this immediately. Consider $0 \le s \le t \le T$.
Taking into account monotone class arguments, given $F \in \shf_s$,
$G \in \shy^1_s$, $1 \le i \le N $, 
it is enough to prove that
\begin{equation} \label{ECondExp}
 E^{\bf Q} ( F G W^i_t) = E^{\bf Q}(F G W^i_s).
\end{equation}
Using the fact that
 $W^i$ is an $(\shf_t)$-martingale and that $E^{Q^{\omega}} (G)$
is $\shf_s$-measurable by item a) 3) of Definition \ref{DWeakStrong}
(established above),
 the left-hand side of \eqref{ECondExp} gives
$$
 E^P ( F W^i_t  E^{Q^{\omega}} (G)) =
  E^P ( F W^i_s  E^{Q^{\omega}} (G)), 
$$
which constitutes the right-hand side of
\eqref{ECondExp}. 
This concludes the proof of the proposition.
\end{proof}

We go on now with step B) of the beginning of Section \ref{S4}.
\begin{prop} \label{P4.2}
We suppose the existence of r.v. $A_1,A_2$ such that
\begin{equation} \label{4.2bis}
0 < A_1(\o)\leq \gamma (t,\xi,\o)\leq A_2(\o), \forall
(t,\xi) \in [0,T] \times \R, \quad \text{ a.s.}
\end{equation}
Let $Y$ be a weak-strong solution to (DSDE)$(\gamma,x_0)$
and we denote by $(\nu_t(dy,\cdot), t \in [0,T])$, the $\mu$-marginal weighted
 laws of process $Y$. 
\begin{enumerate}
\item  There is a  measurable function $q: [0,T] \times \R \times \Omega \rightarrow \R_+$
such that  $\d t \d P $ a.e., 
$\nu_t(\d y,\cdot) = q_t(y,\cdot) \d y$.
In other words the  $\mu$-marginal weighted laws admit densities.
\item
$\int_{[0,T]  \times \R} q^2_t (y,\; \cdot) \d t \d y < \9 \quad \pas.$
\item $q$ is an $L^2(\R)$-valued progressively measurable process.  
\end{enumerate}
\end{prop}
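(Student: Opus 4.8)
The plan is to establish the three assertions of Proposition \ref{P4.2} by transferring classical density estimates for one-dimensional diffusions with bounded non-degenerate coefficients to the present quenched setting, and then integrating over $\omega$. Fix $\omega \in \Omega$ outside a $P$-null set. By Definition \ref{DWeakStrong} a)1), $Y(\cdot,\omega)$ is a weak solution of a one-dimensional SDE whose diffusion coefficient $\gamma(\cdot,\cdot,\omega)$ is measurable, bounded by $A_2(\omega)$, and bounded below by $A_1(\omega)>0$. Equivalently (Remark \ref{R2.10} c)), it solves the $Q^\omega$-martingale problem for $L^\omega_t f = \frac12 \gamma^2(t,\cdot,\omega) f''$. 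For such uniformly elliptic operators in divergence-free form, the Aronson-type Gaussian bounds (or the classical Stroock--Varadhan estimates, e.g.\ Theorem in \cite{sv}) give that the time-$t$ law $\rho_t(\d y,\omega)$ of $Y_t(\cdot,\omega)$ under $Q^\omega$ has a density $p_t(y,\omega)$ for $t>0$, satisfying a Gaussian upper bound depending only on $A_1(\omega), A_2(\omega), t$. Since the $\mu$-marginal weighted law $\nu_t(\d y,\omega)$ is, by Definition \ref{mylau}, obtained from $\rho_t(\d y,\omega)$ by the extra weight $\she_t(\int_0^\cdot \mu(\d s,Y_s))$, and since by Proposition \ref{P28} this weight has finite $Q^\omega$-expectation for a.e.\ $\omega$, one checks that $\nu_t(\d y,\omega)$ is absolutely continuous with respect to $\rho_t(\d y,\omega)$, hence admits a density $q_t(y,\omega)$ as well.

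More concretely, for item 1 I would argue as follows. Write $Z_t = \she_t(\int_0^\cdot \mu(\d s,Y_s))$ and, as in the proof of Theorem \ref{T32}, $Z_t = M_t \exp(\int_0^t e^0(Y_s)\d s)$ with $M$ a martingale. Using the Markov property of $Y(\cdot,\omega)$ under $Q^\omega$ and conditioning $Z_t$ on $Y_t$, one gets $\int_\R \varphi(y)\,\nu_t(\d y,\omega) = \int_\R \varphi(y)\, h_t(y,\omega)\, p_t(y,\omega)\,\d y$ for a suitable nonnegative conditional-expectation factor $h_t(y,\omega) = E^{Q^\omega}(Z_t \mid Y_t = y)$; thus $q_t(y,\omega) = h_t(y,\omega)p_t(y,\omega)$. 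Measurability of $(t,y,\omega)\mapsto q_t(y,\omega)$ follows from measurability of $Y$ as a process and from the construction of the regular conditional densities (one may take a jointly measurable version using the canonical-space realization of Proposition \ref{P4.1}). Alternatively, and perhaps cleaner, I would identify $q$ directly via Theorem \ref{T32}: for the particular choice $\gamma = \Phi(X)$ the weighted laws solve \eqref{PME}, but here $\gamma$ is a general bounded non-degenerate field, so instead I rely on the generic Fokker--Planck/Kolmogorov forward equation satisfied by $\nu_t$ in the distributional sense and the regularity theory for it.

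For item 2, the $L^2$-in-space-and-time bound: from the Gaussian upper bound $p_t(y,\omega)\le C(\omega) t^{-1/2}\exp(-c(\omega)y^2/t)$ and the boundedness of the weight, $\|q_t(\cdot,\omega)\|_{L^2(\R)}^2 \le C'(\omega)\, t^{-1/2}$, which is integrable on $[0,T]$; more robustly, the standard energy estimate for the forward equation with bounded nondegenerate coefficients, tested against the solution itself, gives $\int_0^T \|q_t(\cdot,\omega)\|_{L^2}^2\,\d t <\infty$ directly, with constants depending only on $A_1(\omega),A_2(\omega),T$ and the weight bound $\exp(T\|e^0\|_\infty)$ from Proposition \ref{P28} ii). Since $A_1,A_2$ are finite $P$-a.s., this holds $P$-a.s. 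For item 3, progressive measurability of $q$ as an $L^2(\R)$-valued process: I would show $t\mapsto \nu_t(\cdot,\omega)$ is weakly continuous (as in the proof of Theorem \ref{T32}, using $L^1$-continuity of $Z_t$ and continuity of $Y$), upgrade to $L^2$-weak continuity using the uniform $L^2$ bound on compact time subintervals away from $0$, and then appeal to the fact that a weakly measurable map into the separable Hilbert space $L^2(\R)$ is strongly measurable (Pettis); adaptedness to $(\shf_t)$ comes from item a)3) of Definition \ref{DWeakStrong}, i.e.\ $\omega\mapsto E^{Q^\omega}(\cdot)$ is $\shf_t$-measurable.

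The main obstacle I anticipate is item 1, specifically producing a \emph{jointly measurable} version $q:[0,T]\times\R\times\Omega\to\R_+$ rather than merely, for each $\omega$, a density that exists. The quenched density $p_t(y,\omega)$ and the conditional factor $h_t(y,\omega)$ are each defined only up to null sets that a priori depend on $\omega$, so one must be careful to select versions measurably in $\omega$; the clean fix is to work on the canonical path space $\Omega_1 = C([0,T];\R)$ (as already noted in Remark \ref{R2.10} c) and used in Proposition \ref{P4.1}), where the transition kernels and conditional expectations can be chosen as genuine measurable functions of the coefficient $\gamma(\cdot,\cdot,\omega)$, which is itself $\shf$-measurable by hypothesis. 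Combined with Fubini-type arguments, this yields the required bimeasurability.
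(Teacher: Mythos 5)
Your overall architecture (a quenched estimate for fixed $\omega$, then integration over $\Omega$, with a canonical-space realization handling joint measurability) matches the paper's, and you correctly point to Exercise 7.3.3 of \cite{sv}. But there are two substantive gaps. First, the fixed-$t$ Gaussian bound $p_t(y,\omega)\le C(\omega)t^{-1/2}\exp(-c(\omega)y^2/t)$ is not available here: $\gamma(\cdot,\cdot,\omega)$ is merely measurable in $(t,\xi)$ and the generator $\frac12\gamma^2\partial^2_{\xi\xi}$ is in non-divergence form, so Aronson's estimates (which concern divergence-form operators) do not apply. What Exercise 7.3.3 of \cite{sv} actually gives is the Krylov-type space-time bound $E\bigl(\int_0^T|\varphi|(s,Y_s)\,\d s\bigr)\le C\Vert\varphi\Vert_{L^2([0,T]\times\R)}$, and that is precisely what the paper uses: via Riesz representation in $L^2([0,T]\times\R)$ it produces a density $q$ jointly in $(t,y)$, only for a.e.\ $t$ --- which is why the proposition is formulated $\d t\,\d P$-a.e.\ --- and it delivers items 1 and 2 in one stroke. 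Your fixed-$t$ density with pointwise Gaussian decay is an unjustified strengthening of the available estimates.

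Second, and more seriously, the weight $Z_t=\she_t(\int_0^\cdot\mu(\d s,Y_s))$ is \emph{not} bounded, so the steps ``$q_t=h_tp_t$ with a bounded conditional factor'' and ``$\Vert q_t\Vert^2_{L^2}\le C'(\omega)t^{-1/2}$ by boundedness of the weight'' fail; Proposition \ref{P28} ii) bounds only the \emph{expectation} of $Z_t$, not the random variable itself. Controlling this weight is the technical core of the paper's proof: for fixed $\omega$ one rewrites $\int_0^s e^j(Y_r)\,\d W^j_r$ pathwise by It\^o integration by parts (legitimate because $[Y,W^j]=0$ by Proposition \ref{PB1}), obtaining a factor bounded by a constant depending on $\sup_{t\le T}|W^j_t(\omega)|$ and $A_2(\omega)$, times the genuinely unbounded exponential $\exp(-\int_0^s\delta\,\d B_r)$; the latter is then absorbed by a Girsanov change of measure $\d\tilde Q^\omega=R_T\,\d Q^\omega$, after which $Y$ solves an SDE with bounded drift and bounded non-degenerate diffusion and the Krylov estimate applies under $\tilde Q^\omega$. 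Without this integration-by-parts/Girsanov step your argument does not close. Your fallback ``energy estimate for the forward equation tested against the solution itself'' is also circular, since it presupposes exactly the $L^2$ regularity of $q$ that is to be proved.
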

\begin{proof}
By 3) of Definition \ref{DWeakStrong}, the $\mu$-marginal laws constitute
an $\shs'(\R)$-valued progressively measurable process.
Consequently  3. holds if 1. and 2. hold.

Let
\begin{equation*}
B_t(\cdot,\omega) := \int_0^t \frac{\d Y_s(\cdot,\omega) }{\gamma(s,Y_s(\cdot,\omega),\o)}.
\end{equation*}
We denote again $Q^\o:=Q(\cdot\;, \o)$ according to Definition \ref{DWeakStrong}, $\omega \in \Omega$.\\
Let $\o \in\O$ be fixed. Let $\varphi:[0,T] \times \R\to \R$ be a continuous function with 
compact support.
We need to evaluate
\begin{equation}\label{e4.4}
E^{Q^\o} \( \int_0^T \varphi (s,Y_s) Z_s \d s \),
\end{equation}
where $Z_s = M_s \exp \(\int_0^s e^0 (Y_r)\d r \) $ where
 $M_s = \she_s \(\sum^N_{i=1} \int^\cdot_0 e^i (Y_r)\d W_r^i \).$ \\
$M_s$ is smaller or equal than
$
 \exp \( \sum^N_{j=1} \int^s_0 e^j \(Y_r\) \d W^j_r \) 
$
which equals 
\begin{equation} \label{e4.4TER}
 \exp \( \sum^N_{j=1} \left\{ W_s^j e^j(Y_s) -
 \int^s_0 W^j_r (e^j)'(Y_r) 
\d Y_r \right \} 
  -\frac{1}{2} \int_0^s \left \{ \sum^N_{j=1} W_r^j (e^j)''(Y_r)  
\gamma^2(r, Y_r, \cdot)   
\right  \} \d r \), 
\end{equation}
taking into account the fact that $[Y, W^j] = 0$ for any $1 \le j \le n$,
by Proposition \ref{PB1}. 
Denoting $\Vert g \Vert_\infty : = \sup_{t \in [0,T]} \vert g(t) \vert,$
for a function $g:[0,T] \rightarrow \R$,
\eqref{e4.4TER} is  smaller or equal than
$$ \exp \( \sum^N_{j=1}  \Vert W^j \Vert_\infty
 (\norm{e^j}_\infty + \frac{T}{2}  \norm{(e^j)''}_\infty A_2^2) \)
 \exp \( - \int_0^s \left[ \sum^N_{j=1}  W^j_r (e^j)'
 (Y_r)
\gamma (r,Y_r,\cdot)\right]\d B_r\).$$
So \eqref{e4.4} is bounded by
\begin{equation}\label{e4.5}
\varrho(\o) E^{Q^\o} \(\int^T_0 \vert \varphi \vert (s,Y_s(\cdot,\omega)
 R_s(\cdot,\omega) \d s\),
\end{equation}
where
\begin{eqnarray*}
\varrho(\o) &=& \exp \left(T\norm{e_0}_\9 + \sum^N_{i=1} \norm{W^i}_\9 
\norm{e^i}_\9 \right.\\
 &+&  \left. T \frac{A_2^2(\omega)}{2} \sum^N_{i=1} 
(\norm{W^i}^2_\9 \norm{(e^i)'}^2_\9 + \Vert W^i \Vert_\infty \Vert 
(e^i)''\Vert_\infty) \right)
\end{eqnarray*}
and $R$ is the $Q^\o$-exponential martingale
$$
R_t (\; \cdot \;, \o) = \exp \bigl( -\int^t_0 \delta (r, \; \cdot \;,\o) 
\d B_r  
  - \frac12 \int^t_0 \delta^2 (r,\;\cdot\;,\o)\d r \bigr).
$$
where
$ \delta (r,\;\cdot\;,\o) = \sum^N_{j=1} W_r^j (e^j)' \(Y_r(\; \cdot \; , \o)\)
 \gamma \(r,Y_r(\;\cdot\;,\o), \o \).$
So there is a random (depending on $\omega \in \Omega$) constant
\begin{equation}\label{e4.6}
\varrho_1 (\o) := {\rm const} \(T,W^j,\norm{e^j}_\9,\norm{(e^{j})'}_\9,
\norm{(e^{j})''}_\9,
 \; 1\leq j \leq N,\; A_2(\o)\),
\end{equation}
so that \eqref{e4.5} is smaller than
\begin{equation}\label{e4.7}
\varrho_1(\o) E^{Q^\o} \( \int^T_0 
\vert \varphi(s,Y_s(\;\cdot\;,\o))\vert \d s R_T (\;\cdot\;,\o) \),
\end{equation}
where we remind that $R(\cdot,\omega)$ is a $Q^\omega$-martingale.
By Girsanov theorem, \\
$\wt B_t(\cdot,\o) = B_t(\cdot, \o) + \int_0^t \delta(r,\;\cdot\;,\o)\d r$
is a $\wt Q^\o$-Brownian motion with
$ \d \wt Q^\o = R_T (\;\cdot\;,\o) \d Q^\o.$
At this point, the expectation in \eqref{e4.7} gives 
\begin{equation}\label{e4.8}
E^{\wt Q^\o} \(\int^T_0  \vert \varphi \vert(s,Y_s (\;\cdot\;,\o)) \d s \),
\end{equation}
where
$$
Y_t (\;\cdot\;,\o) = Y_0 + \int^t_0 \gamma (s,Y_s(\;\cdot\;,\o),\o)
 \d \wt B_s
- \int_0^t \gamma (s,Y_s (\;\cdot\;,\o), \o) \delta (s, \;\cdot\;,\o) \d s.
$$

For fixed $\o\in\O$, $\delta$ is bounded by a random constant $\varrho_2(\o)$ of the type \eqref{e4.6}.
Moreover we keep in mind assumption \eqref{e4.1} on $\gamma$.
By Exercise 7.3.3 of \cite{sv}, \eqref{e4.8} is bounded by
$\varrho_3(\o)\norm{\varphi}_{L^2 ([0,T]\times \R)},$
where $\varrho_3(\o)$ again depends on the same quantities as in \eqref{e4.6} and $\Phi$.
So for $\o \pas$,
 the map $\varphi \mapsto E^{Q^\o} \(\int_0^T \varphi
 (s,Y_s (\;\cdot\;,\o)) Z_s (\;\cdot\;,\o) \d s\right)$ prolongs to $L^2([0,T]\times \R)$. Using Riesz' theorem
 it is not difficult to show the existence of an $L^2([0,T]\times\R)$
function $(s,y)\mapsto q_s (y,\o)$ which constitutes indeed the density of the family of the $\mu$-marginal weighted laws.
\end{proof}

\section{On the uniqueness of a Fokker-Planck type SPDE}
\label{S5}

\setcounter{equation}{0}


The next result is an extension of Theorem 3.8 of \cite{BRR1} 
to the stochastic case.  
It has an independent interest  since it is
 a Fokker-Planck SPDE with possibly degenerate measurable coefficients.

\begin{theo} \label{T51}
Let $z_0$ be a distribution in $\shs'(\R)$.
Let $z^1, z^2$ be two measurable random fields belonging $\o$ a.s. to 
$C([0,T],\shs'(\R))$ such that $z^1, z^2: ]0,T]\times \O \to \shm (\R)$.
 Let $a:[0,T]\times\R\times\O\to\R_+$ be a bounded measurable random field
such that, for any $t \in [0,T]$, $a(t, \cdot)$ is
$\shb([0,t]) \otimes \shb(\R) \otimes  \shf_t$-measurable. 
We suppose moreover the following.
\begin{enumerate}
\item[i)] $z^1-z^2 \in L^2 ([0,T]\times \R)$ a.s.
\item[ii)]  $t \mapsto (z^1-z^2)(t,\cdot)$  is an
 $(\shf_t)$-progressively measurable
$\shs'(\R)$-valued process.
\item[iii)]   $ \int_0^T \Vert z^i(s,\cdot \Vert_{\rm var}^2 \d s < \infty$ a.s.
\item[iv)] $z^1,z^2$ are solutions to
\begin{align}\label{e5.1}
\begin{cases} 
\partial_t z(t,\xi) = \partial^2_{\xi\xi} ((a z)(t,\xi)) + z(t,\xi) 
\mu (\d t, \xi),\\
z(0,\;\cdot\;) = z_0.
\end{cases}
\end{align}
\end{enumerate}
Then $z^1 \equiv z^2$.
\end{theo}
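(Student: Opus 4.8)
The plan is to prove the uniqueness by an $H^{-1}$-type energy estimate applied to the difference $\zeta := z^1 - z^2$, exploiting the linearity of \eqref{e5.1} in $z$ and the nonnegativity/boundedness of the random coefficient $a$. First I would note that $\zeta$ solves, in the distributional sense,
\[
\partial_t \zeta(t,\xi) = \partial^2_{\xi\xi}\bigl((a\zeta)(t,\xi)\bigr) + \zeta(t,\xi)\,\mu(\d t,\xi), \qquad \zeta(0,\cdot) = 0,
\]
and that by hypotheses i) and iii), $\zeta(t,\cdot) \in L^2(\R) \subset H^{-1}(\R)$ with $t \mapsto \zeta(t,\cdot)$ progressively measurable. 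The aim is to estimate $\E^P\norm{\zeta(t,\cdot)}_{H^{-1}}^2$ (possibly after localization by a stopping time or after inserting a random exponential weight in $\omega$, since the coefficients are only $\omega$-a.s. bounded, not uniformly) and to show it vanishes.

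The key computation is to apply It\^o's formula to $t \mapsto \norm{\zeta(t,\cdot)}_{H^{-1}}^2 = \bracket{(I-\Delta)^{-1}\zeta(t),\zeta(t)}$. The drift term coming from $\partial^2_{\xi\xi}(a\zeta)$ produces, after pairing against $(I-\Delta)^{-1}\zeta$, a term of the form $-2\int_\R a(t,\xi)\zeta(t,\xi)^2\,\d\xi$ up to a lower-order correction (from $\partial^2_{\xi\xi} = -(I-\Delta) + I$ one gets $-2\int a\zeta^2 + 2\bracket{(I-\Delta)^{-1}\zeta, a\zeta}$); the first of these is $\le 0$ since $a \ge 0$, and the second is controlled by $\norm{a}_\infty \norm{\zeta}_{H^{-1}}^2$ using that multiplication by the bounded function $a$ is $H^{-1}$-bounded up to the $L^2$-norm of $\zeta$ — here one uses $\abs{\bracket{(I-\Delta)^{-1}\zeta, a\zeta}} \le \norm{(I-\Delta)^{-1}\zeta}_{H^1}\norm{a\zeta}_{H^{-1}} \le \norm{\zeta}_{H^{-1}}\norm{a}_\infty \norm{\zeta}_{L^2}$, which is not closed in $H^{-1}$ alone but is fine because we will keep the good negative term $-2\int a\zeta^2$ to absorb it, or alternatively work directly in $L^2$ exploiting i). The martingale part, $2\bracket{(I-\Delta)^{-1}\zeta, \zeta\,\mu(\d t,\cdot)} = 2\sum_{i=1}^N \bracket{(I-\Delta)^{-1}\zeta, e^i\zeta}\d W^i_t$, has zero $P$-expectation after localization, and its quadratic-variation/It\^o-correction contributes $+\sum_{i=1}^N \int \ldots$ terms that, because the $e^i$ are $H^{-1}$-multipliers (Lemma \ref{LMultipl}) and bounded with bounded derivatives, are bounded by $C\norm{\zeta(t,\cdot)}_{H^{-1}}^2$. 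Collecting everything and taking expectations gives
\[
\E^P\norm{\zeta(t\wedge\tau_k,\cdot)}_{H^{-1}}^2 \le C_k \int_0^t \E^P\norm{\zeta(s\wedge\tau_k,\cdot)}_{H^{-1}}^2\,\d s,
\]
whence Gronwall's lemma forces $\zeta \equiv 0$ on $[0,\tau_k]$, and letting $k \to \infty$ concludes.

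The main obstacle I anticipate is making the It\^o formula in $H^{-1}$ rigorous at this low regularity: $\zeta$ is only $L^2$-valued (not $H^1$-valued), the coefficient $a$ is merely bounded and measurable (so $a\zeta \in L^2$ but no better, meaning $\partial^2_{\xi\xi}(a\zeta) \in H^{-2}$ only), and $a$ depends on $\omega$ in a way that is only progressively measurable. The standard fix is a double regularization — mollify in space (convolve the equation with $\rho_\varepsilon$, so that $\zeta_\varepsilon := \rho_\varepsilon * \zeta$ is smooth and the It\^o formula for $\norm{\zeta_\varepsilon}_{H^{-1}}^2$ or $\norm{\zeta_\varepsilon}_{L^2}^2$ is classical) and then pass to the limit, controlling the commutator $[\rho_\varepsilon*, a\cdot]$-type error terms via a DiPerna–Lions / Friedrichs commutator lemma, using that $a$ is bounded. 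One must also handle the $\omega$-dependence of the a.s. bounds $A_i(\omega)$ by the stopping-time localization $\tau_k = \inf\{t : \norm{a(t,\cdot)}_\infty > k\} \wedge T$ (or simply working $\omega$ by $\omega$ on the set where $a$ is bounded, since the equation and the estimate are pathwise in $\omega$ except for the stochastic integral against $W$). Apart from this technical regularization step, the argument is the routine linear $H^{-1}$-energy/Gronwall scheme, and I expect the earlier lemmas — especially Lemma \ref{LMultipl} on $H^{-1}$-multipliers and the integrability bounds of Proposition \ref{P28} — to supply exactly the constants needed.
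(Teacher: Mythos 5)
Your overall architecture --- take $\zeta=z^1-z^2$, exploit linearity, run an It\^o/Gronwall estimate on $\norma{\zeta(t,\cdot)}_{H^{-1}}^2$ after space-mollification and stopping-time localization, keep the good term $-2\int a\zeta^2\le 0$ and control the It\^o corrections via the $H^{-1}$-multiplier property of the $e^i$ --- is exactly the intended one. The paper itself does not write this proof out: it states that the argument is ``Theorem 3.8 of \cite{BRR1} in a randomized form'' and defers the details to \cite{BRR4}; the same computation appears in randomized form in the proof of Lemma \ref{L7.4} of this paper. So you are on the paper's route, not a different one.

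There is, however, one step where the specific inequality you write down would fail. For the cross term coming from $\Delta=-(I-\Delta)+I$ you propose
$\bigabs{\bracketa{(I-\Delta)^{-1}\zeta,\,a\zeta}}\le\norma{\zeta}_{H^{-1}}\norma{a}_\infty\norma{\zeta}_{L^2}$,
to be ``absorbed'' by $-2\int a\zeta^2$. After Young's inequality this requires a bound of the form $\norma{a}_\infty\norma{\zeta}_{L^2}^2\lesssim\int a\zeta^2$, which is false precisely in the situation Theorem \ref{T51} is designed for: $a\ge 0$ is allowed to degenerate (vanish on large sets), so $\int a\zeta^2$ can be $0$ while $\norma{\zeta}_{L^2}$ is large, and nothing absorbs the $\norma{\zeta}_{L^2}^2$ term ($L^2$ is not controlled by $H^{-1}$). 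The correct estimate is the \emph{weighted} Cauchy--Schwarz inequality with respect to the measure $a\,\d\xi$:
$\bigabs{\bracketa{(I-\Delta)^{-1}\zeta,\,a\zeta}}\le\bigl(\int a\,((I-\Delta)^{-1}\zeta)^2\bigr)^{1/2}\bigl(\int a\zeta^2\bigr)^{1/2}\le\bigl(\norma{a}_\infty\norma{\zeta}_{H^{-2}}^2\bigr)^{1/2}\bigl(\int a\zeta^2\bigr)^{1/2}$,
whose square root structure lets $2bc\le b^2+c^2$ produce one copy of $\int a\zeta^2$ (absorbed by the $-2\int a\zeta^2$) plus $\norma{a}_\infty\norma{\zeta}_{H^{-2}}^2\le\norma{a}_\infty\norma{\zeta}_{H^{-1}}^2$, which closes the Gronwall loop. (This is exactly the device used, with $\psi_\kappa-\psi$ in place of $a\zeta$, in the chain of estimates following \eqref{e7.9}.) Two smaller remarks: your ``work directly in $L^2$'' alternative does not make sense, since $\partial^2_{\xi\xi}(a\zeta)$ cannot be paired with $\zeta\in L^2$; and no DiPerna--Lions commutator lemma is needed, because one mollifies the whole equation (so the drift is $\rho_\varepsilon*(a\zeta)$, not $a(\rho_\varepsilon*\zeta)$) and moves the mollifier onto the test element $(I-\Delta)^{-1}\zeta_\varepsilon$, after which only $\rho_\varepsilon*\rho_\varepsilon*\zeta\to\zeta$ in $L^2([0,T]\times\R)$ (hypothesis i)) is required. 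Hypothesis iii) is what makes the stochastic integrals in \eqref{e5.2} well defined and supplies the localizing sequence for the martingale part.
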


\begin{rem} \label{R5.2}
By solution of equation \eqref{e5.1} we intend,
 as expected, the following: for every $\varphi \in \shs(\R), \ \forall t \in [0,T]$,
 \begin{equation}\label{e5.2}
\int_\R \varphi(\xi) z(t,\d \xi) = \left< z_0, \varphi \right>  + \int^t_0 \d s \int_\R a (s,\xi)\varphi''(\xi)
 z(s,\d \xi) 
+ \sum_{j=0}^N  \int_0^t \d W^j_s \int_\R \varphi(\xi) e^j(\xi) z(s,d\xi).
\end{equation}


\end{rem} 

\begin{proof}  [Proof of Theorem \ref{T51}]
The proof makes use of the similar  arguments as in
Theorem 3.8 of \cite{BRR1} or Theorem 3.1 in \cite{BCR2},
in a randomized form. The full proof is given in 
\cite{BRR4} Theorem 4.2,  see also \cite{Bar-Roc-Rus-2200}.

\end{proof}

\section{The non-degenerate
 case} \label{S6}

\setcounter{equation}{0}

We are now able to discuss the doubly probabilistic representation 
of a solution to  \eqref{PME} when $\psi$ is non-degenerate 
provided that its solution fulfills some properties.


 \begin{defi} \label{DNond} 
\begin{itemize} 
\item We will say that equation (\ref{PME}) (or $ \psi$) is 
{\bf non-degenerate} if on each compact,
 there is a constant $c_0 > 0$ such that
$ \Phi  \ge c_0  $.
\item We will say that equation (\ref{PME}) or $ \psi$ is 
{\bf degenerate} if  $ \lim_{u \rightarrow 0_+} \Phi(u) = 0  $.
\end{itemize} 
\end{defi}
 One of the typical examples of degenerate $\psi$ is the
case of $\psi$ being {\bf strictly increasing after some zero}.
This notion was introduced in \cite{BRR2} and it means the following.
There is $0 \le u_c $ such that $\psi_{[0,u_c]} \equiv 0$ and
$\psi$ is strictly increasing on $]u_c, +\infty[$.

\begin{rem}\label{DegNonD}
\begin{enumerate}
\item 
 $\psi$ is non-degenerate if and only if
$   = \lim_{u \rightarrow 0+} \Phi(u) > 0 $.
\item Of course, if $\psi$ is strictly increasing  after 
some zero, with $u_c > 0$ then
 $\psi$ is degenerate.
\item
 If  $\psi$ is degenerate, then
$\psi^\kappa (u) =  (\Phi^2(u) + \kappa) u $,
for every $\kappa > 0$, is non-degenerate.
 \end{enumerate}
\end{rem}
As announced the theorem below
also holds when $\psi$ is multi-valued.\\

\begin{theo}\label{thm6.1}
We suppose the following assumptions.
\begin{enumerate}
\item $x_0$ is a real probability measure. 
\item $\psi$ is non-degenerate.
\item There is only one  random field
 $X:[0,T]\times \R\times \Omega \to \R$ 
solution of \eqref{PME} (see Definition \ref{DSPDE})
such that 
\begin{equation}\label{e6.1}
\int\limits_{[0,T]\times \R} X^2 (s,\xi) \d s \d \xi < \9 \quad \text{a.s.}
\end{equation}
Then there is a unique weak  solution to the (DSNLD)$(\Phi,\mu,x_0)$.
\end{enumerate}
\end{theo}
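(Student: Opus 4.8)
The strategy is to combine the well-posedness of the SPDE \eqref{PME} with the linear theory of Sections \ref{S4}--\ref{S5}, the two being glued together by Theorem \ref{T32}. \emph{Uniqueness} is the easier half: if $(Y^1,X^1)$ and $(Y^2,X^2)$ are two weak solutions of (DSNLD)$(\Phi,\mu,x_0)$, then by Theorem \ref{T32} each $X^i$ solves \eqref{PME}, and by Definition \ref{DDoubleStoch} each $Y^i$ is a weak-strong solution of (DSDE)$(\gamma^i,x_0)$ with $\gamma^i=\Phi(X^i)$. Since $\Phi$ is bounded and, $\psi$ being non-degenerate, $\gamma^i$ also admits a strictly positive random lower bound (see the discussion of the main obstacle below), $\gamma^i$ obeys \eqref{he4.2}; Proposition \ref{P4.2} then shows that each $X^i$ satisfies the square-integrability property \eqref{e6.1}. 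Hypothesis~3 forces $X^1=X^2=:X$, whence $Y^1$ and $Y^2$ are weak-strong solutions of one and the same (DSDE)$(\Phi(X),x_0)$ with bounded non-degenerate coefficient; by the uniqueness part of Proposition \ref{P4.1}, $(Y^1,W^1,\dots,W^N)$ and $(Y^2,W^1,\dots,W^N)$ have the same law, which is exactly weak uniqueness of (DSNLD)$(\Phi,\mu,x_0)$.

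For \emph{existence}, let $X$ be the solution of \eqref{PME} satisfying \eqref{e6.1}; it is unique by Hypothesis~3, its existence being part of the analytic well-posedness theory for \eqref{PME}, see \cite{BRR3}. Put $\gamma:=\Phi(X)$ and $a:=\halb\Phi^2(X)=\halb\gamma^2$, a nonnegative, bounded, progressively measurable random field. By Proposition \ref{P4.1} there is a weak-strong solution $Y$ of (DSDE)$(\gamma,x_0)$ on a suitable enlarged probability space, and by Proposition \ref{P4.2} its family $\bar X$ of $\mu$-marginal weighted laws has $L^2([0,T]\times\R)$ densities, hence also satisfies \eqref{e6.1}. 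Repeating the It\^o computation in the proof of Theorem \ref{T32} with this $Y$ shows that $\bar X$ solves the linear Fokker--Planck SPDE \eqref{e5.1} with coefficient $a=\halb\Phi^2(X)$ and initial datum $x_0$; on the other hand $X$, being a solution of \eqref{PME}, satisfies $\halb\psi(X)=aX$ and hence solves the \emph{same} equation \eqref{e5.1}. Both fields satisfy hypotheses i)--iii) of Theorem \ref{T51}: condition i) by \eqref{e6.1}; ii) by progressive measurability; iii) because the total masses of $X$ and $\bar X$ are a.s.\ bounded on $[0,T]$, for $\bar X$ by Proposition \ref{P28}~iv) and for $X$ by testing the weak formulation of \eqref{PME} against approximations of the constant $1$. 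Theorem \ref{T51} then yields $X=\bar X$, so $Y$ solves (DSDE)$(\Phi(\bar X),x_0)$ and $\bar X=X$ is its family of $\mu$-marginal weighted laws; that is, $(Y,X)$ is a weak solution of (DSNLD)$(\Phi,\mu,x_0)$.

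The point needing real work is, in both halves, the lower bound in \eqref{he4.2} for $\gamma=\Phi(X)$; the upper bound is automatic since $\Phi$ is bounded on $\R$. Non-degeneracy of $\psi$ only gives $\Phi\geq c_0>0$ on compact sets (Remark \ref{DegNonD}), so a genuinely uniform lower bound for $\Phi(X(t,\xi,\omega))$ requires controlling the size of $X$ on $[0,T]\times\R$. I would obtain this either from an a priori $L^\infty$-type estimate on $X$, or --- for a general non-degenerate $\psi$ --- by first regularizing $\psi$ into $\psi^\kappa(u)=(\Phi^2(u)+\kappa)u$, $\kappa>0$, for which $\Phi^\kappa=\sqrt{\Phi^2+\kappa}\geq\sqrt{\kappa}$ is non-degenerate \emph{globally} (Remark \ref{DegNonD}); the argument above then applies directly to produce a weak solution $(Y^\kappa,X^\kappa)$ of (DSNLD)$(\Phi^\kappa,\mu,x_0)$, after which one lets $\kappa\to0$, using the martingale bounds of Proposition \ref{P28} and the convergence statement of Proposition \ref{A4} to extract a limit whose marginal field is identified with $X$ via Hypothesis~3 and whose associated process is the desired $Y$.
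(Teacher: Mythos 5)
Your proof follows the paper's own argument essentially verbatim: for existence, set $\gamma=\Phi(X)$, invoke Proposition \ref{P4.1} and Proposition \ref{P4.2} to get a weak-strong solution $Y$ with square-integrable densities of its $\mu$-marginal weighted laws, show via the It\^o computation of Theorem \ref{T32} that these densities solve the linear Fokker--Planck SPDE \eqref{e5.1} with $a$ proportional to $\Phi^2(X)$, and conclude $\bar X=X$ by Theorem \ref{T51}; for uniqueness, use Theorem \ref{T32}, Proposition \ref{P4.2}, hypothesis~3 and the uniqueness part of Proposition \ref{P4.1} exactly as the paper does. The one place you go beyond the paper --- the global lower bound in \eqref{he4.2} for $\Phi(X)$, which Definition \ref{DNond} only guarantees on compacts --- is a genuine subtlety that the paper's proof passes over silently when it applies Proposition \ref{P4.1}, so flagging it (and noting it requires an a priori bound on $X$, the $\psi^\kappa$-regularization route being essentially the Section \ref{S7} machinery, which would deliver existence but not the uniqueness assertion) is a fair observation rather than a defect of your argument.
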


\begin{rem} \label{R6.2}  
\begin{enumerate}  
\item 
 An easy adaptation of Theorem 3.4 of \cite{BRR3} (taking into account $e^0$),
 when $\psi$  is Lipschitz 
and $e^0, \ldots, e^N$ belong to $H^1$ allows to show 
that  there is a solution to \eqref{PME} such that\\
$E\( \int\limits_{[0,T]\times \R} X^2 (s,\xi) \d s \d \xi\) < \9.$
This holds even if $x_0$ belongs to $ H^{-1}(\R)$.
According to Theorem \ref{TB1}, that solution is unique. 
In particular item 3. in Theorem \ref{thm6.1} statement holds.
\item
 Theorem \ref{thm6.1} constitutes the converse of Theorem \ref{T32}
 when $\psi$ is non-degenerate.
\item
The theorem also holds if $\psi$ is multi-valued.
For implementing this, we need to adapt  the techniques of \cite{BRR1}.
\item
 As side-effect of the proof of the weak-strong existence Proposition 
\ref{P4.1}, the space $(\O_0,\shg,{\bf Q})$ can be chosen as 
$\O_0=\O_1\times \O,\; \O_1= C([0,T];\; \R) \times \R, \; \shg =\shb(\O_1)\times 
\shf,\; {\bf Q}(H\times F)=\int\limits_{\O_1\times\O} \d P(\o) 
1_F (\o)Q(\d \o_1,\o)$.
\end{enumerate}
\end{rem}

\begin{proof}
\begin{enumerate}
[1)]
\item We set $\gamma(t,\xi,\o)=\Phi \(X(t,\xi,\o)\right)$. According to
 Proposition \ref{P4.1} there is a weak-strong solution $Y$ to (DSDE)$(\gamma,x_0)$. By Proposition \ref{P4.2} $\o$ a.s. the $\mu$-marginal weighted 
laws of $Y$ admit  densities $\(q_t (\xi,\o),t\in]0,T],\; \xi \in\R,\; \o \in \O\right)$ such that $\pas$
$\int\limits_{[0,T]\times \R} \d s \d \xi q^2_s (\xi,\;\cdot\;) < \9 \quad \text{a.s.} $
\item Setting 
\begin{equation*}
\nu_t (\xi,\o)= \left(
\begin{array}{ccc}
q_t(\xi,\o) \d \xi &:& t\in ]0,T],\\
x_0 &:& t=0,
\end{array}
\right.
\end{equation*}
$\nu$ is a solution to \eqref{e5.1} with $\nu_0=x_0,\; a(t,\xi,\o)=
\Phi^2(X(t,\xi,\o))$. This can be shown applying It\^o's formula similarly as in the proof of Theorem \ref{T32}.
\item On the other hand $X$ is obviously also a solution of \eqref{e5.1},
which  in particular verifies \eqref{e6.1}.
Consequently $z^1=\nu, \; z^2=X$ verify items i), ii), iii) of
 Theorem \ref{T51}.
So Theorem \ref{T51} implies that $\nu \equiv X$; this shows that $Y$ provides a solution to  (DSNLD)$(\Phi,\mu,x_0)$.
\item Concerning uniqueness, let $Y^1,Y^2$ be two 
solutions to the (DSNLD) related to $(\Phi,\mu,x)$. The 
corresponding random fields $X^1,X^2$ constitute the $\mu$-marginal laws of $Y^1,Y^2$ respectively.
\end{enumerate}
Now $Y^i,\; i=1,2$, is a weak-strong solution of (DSDE)$(\gamma_i,x)$ with
 $\gamma_i(t,\xi,\o)=\Phi(X_i(t,\xi,\o))$, so by Proposition 
\ref{P4.2}
 $X_i,\; i=1,2$ fulfills 
\eqref{e6.1}. By Theorem \ref{T32}, $X_1$ and $X_2$ are solutions to
\eqref{PME}.
 By assumption 3. of the statement,
 $X_1=X_2$. The conclusion follows by Proposition \ref{P4.1}, which  
guarantees the uniqueness of the weak-strong solution of (DSDE)$(\gamma,x_0)$
 with $\gamma=\gamma_1=\gamma_2$.
\end{proof}

\begin{rem} \label{R6.3}
One side-effect of Theorem \ref{thm6.1} is the following. Suppose $\psi$
 to be non-degenerate. Let $X:[0,T]\times \R\times\O\to\R$ be a solution such that $\pas$ \\
$\int\limits_{[0,T]\times \R} X^2 (s,\xi) \d s \d \xi <\9 \quad \text{a.s.}$
We have the following for $\o \pas$
\begin{enumerate}[i)]
\item $X(t,\; \cdot\;,\o)\geq 0$ a.e. $\forall\; t\in [0,T].$
\item $E\(\int\limits_\R X(t,\xi)\d \xi\right)=1,\; \forall \; t\in [0,T]$ if $e_0=0$.
\end{enumerate}
\end{rem}
\begin{rem}\label{R6.4}
If \eqref{PME} has a solution, not necessarily unique, then (DSNLD) 
with respect to $(\Phi,\mu,x_0)$ still admits existence.
\end{rem}

\section{The degenerate case}
\label{S7}

\setcounter{equation}{0}

The idea consists in proceeding similarly to \cite{BRR2}, which treated
 the case $\mu=0$ and the case when $x_0$ is absolutely continuous
with bounded density.
$\psi$ will be assumed to be strictly increasing after some zero $u_c \ge 0$,
see Definition \ref{DNond}.
We recall that if $\psi$ is degenerate, then necessarily
$\Phi(0):=  \lim_{x \rightarrow 0} \Phi(x) = 0$.

The theorem below concerns existence, we do not know any uniqueness result
in the degenerate  case.
\begin{theo} \label{T73}
We suppose the following.
\begin{enumerate}
\item The functions $ e^i, 1 \le i \le N$ 
 belong to $H^1(\R)$. 
\item We suppose that $\psi:\R\to\R$ is non-decreasing, Lipschitz and 
strictly increasing after some zero.
\item $x_0$ belongs to $L^1(\R) \cap L^2(\R)$.
\end{enumerate}
 Then there is a weak solution to the (DSNLD)$(\Phi, \mu, x_0)$.
\end{theo}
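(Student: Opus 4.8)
The plan is to approximate the degenerate $\psi$ by the non-degenerate functions $\psi^\kappa(u)=(\Phi^2(u)+\kappa)\,u$, $\kappa>0$, of Remark \ref{DegNonD}, to solve the resulting doubly stochastic non-linear diffusions by the results of Section \ref{S6}, and then to let $\kappa\to 0$. Put $\Phi_\kappa:=\sqrt{\Phi^2+\kappa}$, which is continuous, bounded, and bounded below by $\sqrt\kappa>0$, so that $\psi^\kappa$ is non-degenerate in the sense of Definition \ref{DNond}; moreover $\psi^\kappa=\psi+\kappa\,\mathrm{id}$ is Lipschitz and non-decreasing. Since $e^i\in H^1(\R)$ and $x_0\in L^1(\R)\cap L^2(\R)$, Remark \ref{R6.2} 1. provides for every $\kappa>0$ a solution $X^\kappa$ of \eqref{PME} (with $\psi$ replaced by $\psi^\kappa$) such that $E\int_{[0,T]\times\R}(X^\kappa)^2\,\d s\,\d\xi<\infty$, and this solution is unique; hence item 3. of Theorem \ref{thm6.1} holds for $\psi^\kappa$, and Theorem \ref{thm6.1} yields a weak solution $(Y^\kappa,X^\kappa)$ of (DSNLD)$(\Phi_\kappa,\mu,x_0)$, where $X^\kappa$ is the family of densities of the $\mu$-marginal weighted laws of $Y^\kappa$. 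The goal is to show that a limit point $(Y,X)$ of $(Y^\kappa,X^\kappa)_{\kappa>0}$ is a weak solution of (DSNLD)$(\Phi,\mu,x_0)$, following the scheme of \cite{BRR2}, which treated $\mu\equiv 0$.

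First I would derive the a priori bounds uniform in $\kappa$. On the level of the SPDE, the $H^{-1}$-energy estimates behind \cite{BRR3} (adapted to incorporate $e^0$, as in Remark \ref{R6.2} 1.) give $\sup_{\kappa>0}E\int_{[0,T]\times\R}(X^\kappa)^2\,\d s\,\d\xi<\infty$ together with a bound on $\psi^\kappa(X^\kappa)$ in a space of the form $L^2([0,T];H^1)$; since $\psi$ is Lipschitz this also controls $\psi(X^\kappa)=\psi^\kappa(X^\kappa)-\kappa X^\kappa$ in $L^2([0,T]\times\R)$. On the level of trajectories, for $P$-a.e.\ $\omega$ the process $Y^\kappa(\cdot,\omega)$ solves the martingale problem for $\tfrac12(\Phi^2(X^\kappa(t,\xi,\omega))+\kappa)f''(\xi)$ (Remark \ref{R2.10} c)); as $\Phi$ is bounded, these diffusion coefficients are uniformly bounded, so the laws of $Y^\kappa$, and jointly of $(Y^\kappa,W^1,\dots,W^N,B^\kappa)$, are tight; and the $\mu$-weights, being Doléans exponentials built from the bounded $e^i$, are uniformly integrable by Proposition \ref{P28}.

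Then I would extract, along a subsequence, $X^\kappa\to X$ weakly in $L^2([0,T]\times\R\times\Omega)$ and, $P$-a.s.\ after a further extraction, in $C([0,T];\shs'(\R))$, and the laws of $(Y^\kappa,W^1,\dots,W^N,B^\kappa)$ converging weakly, with a limiting random kernel $\omega\mapsto Q^\omega$ (measurable as in Remark \ref{RConv1}) and a limiting process $(Y,W^1,\dots,W^N,B)$. Since $\kappa X^\kappa\to 0$ in $L^2$, the essential point is to identify the weak limit of $\psi(X^\kappa)$ with $\psi(X)$: I would obtain $X^\kappa\to X$ strongly in $L^2([0,T];H^{-1}_{\mathrm{loc}}(\R))$ by an Aubin--Lions type argument (time regularity of $X^\kappa$ read from \eqref{PME}, spatial bound on $\psi^\kappa(X^\kappa)$) and then use the monotonicity (Minty) trick, exploiting that $\psi$ is non-decreasing, to conclude. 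This shows that $X$ solves \eqref{PME} with the original $\psi$ and still satisfies $E\int_{[0,T]\times\R}X^2\,\d s\,\d\xi<\infty$. In parallel, the weak convergence of the laws of $(Y^\kappa,W)$, combined with the uniform integrability of the weights and a Lemma \ref{LMargU}-type argument, shows that the $\mu$-marginal weighted laws of $Y$ are the limits of those of $Y^\kappa$, hence equal $X(t,\xi,\omega)\,\d\xi$; and, passing to the limit in the martingale problem, $Y(\cdot,\omega)$ solves the martingale problem for $\tfrac12\Phi^2(X(t,\xi,\omega))f''$ with initial law $x_0$, i.e.\ $(Y,X)$ is a weak solution of (DSNLD)$(\Phi,\mu,x_0)$ in the sense of Definitions \ref{DWeakStrong} and \ref{DDoubleStoch}.

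The step I expect to be the main obstacle is the passage to the limit in the nonlinearity along the trajectory, namely showing that $\Phi^2(X^\kappa(s,Y^\kappa_s))\to\Phi^2(X(s,Y_s))$ in the martingale problem: here $\Phi$ is only continuous and may vanish at $0$, so Lipschitz stability is unavailable and one must combine the strong $L^2_{\mathrm{loc}}$-convergence of $X^\kappa$, the weak convergence of the occupation measures of $Y^\kappa$, and a control ensuring that the limiting occupation measure does not concentrate on the small set where $\Phi$ degenerates. This is precisely where the structural hypothesis that $\psi$ be strictly increasing after a single zero $u_c\ge 0$, together with $x_0\in L^1(\R)\cap L^2(\R)$, enters, as in \cite{BRR2}.
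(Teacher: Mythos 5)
Your overall strategy coincides with the paper's: regularize to $\psi_\kappa(u)=(\Phi^2(u)+\kappa)u$, solve the non-degenerate (DSNLD) via Theorem \ref{thm6.1} and Remark \ref{R6.2}, and pass to the limit $\kappa\to 0$. But the two key analytic sub-steps are handled quite differently, and in one place your proposal is too vague to count as a proof. For the convergence of the random fields, you propose extracting weak $L^2$ limits and identifying the nonlinearity by Aubin--Lions compactness plus the Minty monotonicity trick. The paper instead exploits that the degenerate equation \eqref{PME} \emph{already has} a solution $X$ satisfying \eqref{equater} (Remark \ref{remB2bis}, via \cite{BRR3}, since $x_0\in L^2$), and proves \emph{strong} convergence $X^\kappa\to X$ by an $H^{-1}$-valued It\^o/Gronwall estimate on $\norma{X^\kappa-X}^2_{H^{-1}}$ (Lemma \ref{L7.4}). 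This buys not just convergence but the three quantitative statements of Lemma \ref{L7.4} --- in particular $\psi(X^\kappa)\to\psi(X)$ in $L^2$ and $\kappa\,E\norma{X^\kappa-X}^2_{L^2}\to 0$ --- which are exactly what is consumed later; a soft weak-compactness limit would not obviously deliver them. Your route is in principle viable for constructing \emph{some} solution of the SPDE, but it is heavier (stochastic Aubin--Lions) and, more importantly, does not by itself produce the strong $\psi$-convergence needed downstream.

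The genuine gap is at the trajectory level, precisely where you flag ``the main obstacle.'' Your proposed mechanism --- controlling the limiting occupation measure of $Y$ so that it does not concentrate where $\Phi$ degenerates --- is not substantiated and is not how the difficulty is resolved. The paper's Lemma \ref{L7.7} instead rewrites $E^{Q^{\o,\kappa}}\bigl(\int_0^T(\Phi_\kappa(X^\kappa(r,Y_r))-\Phi(X(r,Y_r)))^2\varphi\,Z_r\,\d r\bigr)$ as an integral against the density $X^\kappa(r,y)\,\d y$ itself (using that $X^\kappa$ is the $\mu$-marginal weighted law of $Y$ under $Q^{\o,\kappa}$, after first removing the weight $Z_r$ via the uniform moment bounds of Lemma \ref{L7.6bis}), and then bounds $(\Phi_\kappa(X^\kappa)-\Phi(X))^2X^\kappa$ by $\abs{\psi(X^\kappa)-\psi(X)}+\Phi^2(X)\abs{X^\kappa-X}+\kappa(\dots)$. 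The first term dies by Lemma \ref{L7.4}~b); the second dies because on $\{X\le u_c\}$ one has $\Phi^2(X)=0$, while on $\{X>u_c\}$ strict monotonicity of $\psi$ upgrades $\psi(X^{\kappa_n})\to\psi(X)$ to $X^{\kappa_n}\to X$ a.e.\ (Remark \ref{R7.5}~4)). This is where ``strictly increasing after some zero'' actually enters; no statement about occupation measures is needed. Two further points you gloss over: (i) passing to the limit in $\int\d P(\o)F(\o)E^{Q^{\o,\kappa}}(G)$ requires \emph{stable} convergence against bounded merely measurable $F$, not just weak convergence of the joint laws --- this is the content of Lemma \ref{TStableKolm} and Corollary \ref{C7.6}; and (ii) on the set where $\gamma=\Phi(X)$ vanishes one must enlarge $\Omega_1$ by an independent Brownian motion $\beta$ to manufacture the driving $B$ in item 1) of Definition \ref{DWeakStrong}.
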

\begin{rem} \label{R7.2}
If $u_c>0$ then $\psi$ is necessarily degenerate
and also $\Phi$ restricted to $[0,u_c]$ vanishes. 
\end{rem}
\begin{prooff}  \ (of Theorem \ref{T73}).
\begin{enumerate}[1)]
\item We proceed by approximation rendering $\Phi$ non-degenerate. Let $\kappa>0$. We define $\Phi_\kappa:\R\to\R_+$ by
$ \Phi_\kappa (u) = \sqrt{\Phi^2(u)+\kappa}, \quad
\psi_\kappa (u) = \Phi_\kappa^2(u)\cdot u. $
Let $X^\kappa$ be the solution so \eqref{PME} with $\psi_\kappa$ instead of $\psi$.
According to Theorem \ref{thm6.1} and Remark \ref{R6.2} 4.,
 setting 
\begin{equation} \label{ENummer}
\wt \O_1=C\([0,T],\R\)\times \R, \quad  Y(\o_1,\o)=\o_1,
\end{equation}
$\shh$  the Borel $\sigma$-algebra of $\wt \Omega_1,$
there  are families of probability kernels $ Q^\kappa$ on $\shh\times\O,$
and measurable processes $B^\kappa$ on $\wt \O_0 = \wt \Omega_1 \times \Omega$ such that
\begin{enumerate}[i)]
\item $B^\kappa (\;\cdot\;,\o)$ is a
 $Q^\kappa (\;\cdot\;,\o)$-Brownian motion;
\item \label{e7.3} $Y$ is a (weak) solution, on $(\tilde \Omega_1, Q^\kappa(\cdot,\o))$, of\\ 
$Y_t=Y_0 +\int\limits_0^t \Phi_\kappa (X^\kappa(s,Y_s,\o))\d B^\kappa_s(\cdot,\omega), \ t \in [0,T]; $
\item $Y_0$ is distributed according to $x_0=X^\kappa(0,\;\cdot\;)$.
\item The $\mu$-marginal weighted laws of $Y$ under ${\bf Q}^\kappa$ are $(X^\kappa(t,\;\cdot\;))$.
\end{enumerate}
In agreement with Definition \ref{DDoubleStoch} and Definition \ref{DWeakStrong}, 
we need to show the existence  of a suitable measurable space $(\Omega_1,\shh)$, 
a probability kernel $Q$ on 
 $\shh\times\O$, a process $B$ on $\O_0:= \Omega_1 \times \Omega$ such that 
the following holds.
\begin{enumerate}[i)]
\item $B (\;\cdot\;,\o)$ is a $Q (\;\cdot\;,\o)$-Brownian motion.
\item $Y$ is a (weak) solution on $(\Omega_1, Q(\cdot,\o))$ of \\
$ Y_t=Y_0+\int\limits_0^t \Phi (X(s,Y_s,\o))\d B_s(\cdot,\omega), \ t \in[0,T], $
i.e. item 1) of Definition \ref{DWeakStrong}. 
Moreover items 2), 3) of the same Definition are fulfilled.
\item $Y_0$ is distributed according to $x_0$.  
\item For every $t\in]0,T],\; \varphi \in C_b(\R)$, if we denote $Q^\o = Q(\;\cdot\;,\o)$, we have
\begin{equation}\label{e7.5}
\int_\R X(t,\xi) \varphi(\xi)\d \xi = E^{Q^\o} \left(\varphi(Y_t)
\she_t\(\int\limits_0^\cdot \mu (\d s, Y_s) X(s,Y_s)\)\right). 
\end{equation}
\end{enumerate}
\item We  show now that $X^\kappa$ approaches $X$ in some sense
 when $\kappa \to 0$, where $X$ is
 the solution to \eqref{PME}. This is given in the Lemma \ref{L7.4} below.
\end{enumerate}

\begin{lemma} \label{L7.4}
Under the  assumptions of Theorem \ref{T73}, 
according to Remark \ref{remB2bis},
let $X$ (resp. $X^\kappa$) be a solution of \eqref{PME}
verifying \eqref{equater} with $\psi(u) = u \Phi^2(u)$ (resp. $\psi_\kappa(u)
 = u(\Phi^2(u) + \kappa)$), for $ u > 0$.
We have the following.
\begin{enumerate}[a)]
\item
$
\lim_{\kappa \to 0} \sup_{t\in [0,T]} E\(\norm{X^\kappa (t,\;\cdot\;)-X(t,\;\cdot\;)}^2_{H^{-1}}\)=0;
$
\item
$\lim_{\kappa \to 0}E\(\int^T_0 \d t \norm{\psi\(X^\kappa (t,\;\cdot\;)\)-\psi\(X(t,\;\cdot\;)\)}^2_{L^2}\)=0;$
\item  $\lim_{\kappa \to 0} \kappa  E\(\int_{[0,T]\times \R}
 \d t \d \xi \(X^\kappa (t,\xi) - X(t,\xi)\)^2 \) = 0. $
\end{enumerate}
\end{lemma}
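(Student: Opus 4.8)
The plan is to prove Lemma~\ref{L7.4} by deriving uniform-in-$\kappa$ a priori estimates for $X^\kappa$ in the appropriate function spaces, and then identifying the limit. The natural framework is the $H^{-1}(\R)$-based energy method used in \cite{BRR3} for the stochastic porous media equation; the key point is that the extra term $\kappa u$ added to $\psi$ is monotone and only helps the monotonicity/coercivity estimates, while its contribution is controlled by item~c).

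First I would test equation \eqref{PME} (written with $\psi_\kappa$ and solution $X^\kappa$) against $(I-\Delta)^{-1} X^\kappa$, i.e. apply the It\^o formula in $H^{-1}$ to $\norm{X^\kappa(t,\cdot)}^2_{H^{-1}}$. This produces, on the left, the $H^{-1}$-norm; on the right, a drift term of the form $-\int_0^t \langle \psi_\kappa(X^\kappa(s,\cdot)), X^\kappa(s,\cdot)\rangle_{L^2}\,\d s$ (using $\partial^2_{\xi\xi}$ paired against $(I-\Delta)^{-1}$ and the identity $\langle \partial^2_{\xi\xi} f, (I-\Delta)^{-1} g\rangle = -\langle f,g\rangle + \langle f, (I-\Delta)^{-1} g\rangle$, so one actually gets $-\norm{\psi_\kappa(X^\kappa)^{1/2}\cdots}$ type coercive control plus a lower-order term), the multiplicative-noise martingale term, and the It\^o correction $\frac12\sum_i \int_0^t \norm{e^i X^\kappa(s,\cdot)}^2_{H^{-1}}\,\d s$, which by Lemma~\ref{LMultipl} is bounded by $C\int_0^t\norm{X^\kappa(s,\cdot)}^2_{H^{-1}}\,\d s$. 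Writing $\psi_\kappa(u)u = \psi(u)u + \kappa u^2 = (\Phi^2(u)+\kappa)u^2$, the coercivity term splits into $\int_0^t\int_\R \Phi^2(X^\kappa)(X^\kappa)^2\,\d\xi\,\d s + \kappa\int_0^t\int_\R (X^\kappa)^2\,\d\xi\,\d s$. Taking expectations, using the BDG inequality and Gronwall on the resulting inequality in $E\norm{X^\kappa(t,\cdot)}^2_{H^{-1}}$, one obtains a bound on $\sup_{t\le T} E\norm{X^\kappa(t,\cdot)}^2_{H^{-1}}$, on $E\int_0^T\norm{\psi(X^\kappa(s,\cdot))}^2_{L^2}\,\d s$ (via $\psi(u)u \ge c\,\psi(u)^2$ when $\Phi$ is bounded and $\psi$ Lipschitz, so $\Phi^2 u^2 = \psi(u)u \gtrsim \psi(u)^2$), and crucially on $\kappa\, E\int_{[0,T]\times\R}(X^\kappa)^2\,\d t\,\d\xi$ — all uniformly in $\kappa\in(0,1]$.

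Next I would estimate the difference $X^\kappa - X$. Subtract the two equations; applying It\^o's formula to $\norm{(X^\kappa-X)(t,\cdot)}^2_{H^{-1}}$, the drift contains $-\int_0^t\langle \psi_\kappa(X^\kappa)-\psi(X), X^\kappa - X\rangle_{L^2}\,\d s$, which we rewrite as $-\int_0^t\langle \psi(X^\kappa)-\psi(X), X^\kappa-X\rangle_{L^2}\,\d s - \kappa\int_0^t\langle X^\kappa, X^\kappa-X\rangle_{L^2}\,\d s$. The first integral is $\le 0$ by monotonicity of $\psi$ (this is where ``$\psi$ non-decreasing'' enters); more precisely, since $\psi$ is Lipschitz and monotone, $(\psi(a)-\psi(b))(a-b) \ge \frac{1}{L}(\psi(a)-\psi(b))^2$, giving genuine control of $\psi(X^\kappa)-\psi(X)$ in $L^2$. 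The $\kappa$-term is bounded by $\kappa\big(\int_0^t\norm{X^\kappa}^2_{L^2} + \norm{X}^2_{L^2}\big)\,\d s$, which is $O(\kappa)$ by the uniform bound from the previous step together with the assumed $L^2$-integrability \eqref{e6.1}/\eqref{equater} of $X$. The noise terms contribute a martingale (vanishing in expectation) and an It\^o correction $\le C\int_0^t E\norm{(X^\kappa-X)(s,\cdot)}^2_{H^{-1}}\,\d s$ via Lemma~\ref{LMultipl}. Gronwall's lemma then yields $\sup_{t\le T} E\norm{(X^\kappa-X)(t,\cdot)}^2_{H^{-1}} \le C\kappa \to 0$, which is a); the coercivity term simultaneously gives $E\int_0^T\norm{\psi(X^\kappa(t,\cdot))-\psi(X(t,\cdot))}^2_{L^2}\,\d t \le C\kappa \to 0$, which is b); and c) is exactly the $\kappa\cdot(\text{bounded})$ estimate already isolated. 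One technical wrinkle is that the rigorous It\^o formula in $H^{-1}$ for these SPDEs requires a regularization/approximation argument (e.g. Yosida approximation of the monotone operator $u\mapsto -\partial^2_{\xi\xi}\psi(u)$, or passing through the variational solutions of \cite{BRR3}); I would invoke the machinery of \cite{BRR3} rather than redo it.

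The main obstacle I anticipate is justifying the $H^{-1}$-It\^o formula and the pairing computations with full rigor when $\psi$ is merely Lipschitz and $\Phi$ merely bounded and continuous — in particular ensuring $X^\kappa-X \in L^2([0,T]\times\R)$ so that all the $L^2$-pairings make sense (this is where hypothesis i) of Theorem~\ref{T51} and the statement ``verifying \eqref{equater}'' in the lemma are used), and controlling the multiplicative-noise It\^o correction uniformly via the $H^{-1}$-multiplier bound of Lemma~\ref{LMultipl}. Once the a priori bound $\kappa\,E\int_{[0,T]\times\R}(X^\kappa)^2 \le C$ is in hand, items a), b), c) all fall out of the single difference estimate, so the real work is entirely in the uniform estimate and the legitimacy of the energy identity.
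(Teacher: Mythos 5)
Your overall strategy is the same as the paper's: write the $H^{-1}$-valued equation for the difference $X^\kappa-X$, apply the It\^o formula to $g^\kappa(t)=\norm{(X^\kappa-X)(t,\cdot)}^2_{H^{-1}}$, use monotonicity plus the Lipschitz property of $\psi$ in the form $(\psi(a)-\psi(b))(a-b)\ge \alpha(\psi(a)-\psi(b))^2$, control the It\^o correction of the noise by the $H^{-1}$-multiplier property of the $e^i$, and close with Gronwall. The paper does not need your preliminary a priori estimate on $X^\kappa$ alone; everything comes out of the single difference inequality.

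There is, however, one step in your difference estimate that fails as written. You bound the term $-\kappa\int_0^t\langle X^\kappa,\,X^\kappa-X\rangle_{L^2}\,\d s$ by $\kappa\bigl(\int_0^t\norm{X^\kappa}^2_{L^2}+\norm{X}^2_{L^2}\bigr)\d s$ and claim this is $O(\kappa)$ ``by the uniform bound from the previous step.'' But your previous step only gives $\kappa\,E\int_{[0,T]\times\R}(X^\kappa)^2\le C$ uniformly in $\kappa$ (the coercivity you extract is $\int\psi_\kappa(X^\kappa)X^\kappa\ge\kappa\int(X^\kappa)^2$, nothing better when $\Phi$ is degenerate), so $\kappa\,E\int\norm{X^\kappa}^2_{L^2}$ is only $O(1)$, not $o(1)$. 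With an $O(1)$ remainder on the right-hand side, Gronwall no longer yields $\sup_t E(g^\kappa(t))\to 0$, and consequently neither a), nor b), nor c) follows. The correct treatment — which is what the paper does — is to split $\kappa X^\kappa=\kappa(X^\kappa-X)+\kappa X$: the first piece contributes $-2\kappa\int_0^t\norm{(X^\kappa-X)(s,\cdot)}^2_{L^2}\d s$, which has a favorable sign and is kept on the left-hand side (it is precisely what later delivers item c)); only the cross term $-2\kappa\int_0^t\langle X,\,X^\kappa-X\rangle_{L^2}\d s$ needs Young's inequality, at the cost of $\kappa\int_0^t\norm{X(s,\cdot)}^2_{L^2}\d s$ plus a piece absorbed by the left-hand side. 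This cost is genuinely $O(\kappa)$ because $E\int_0^T\norm{X(s,\cdot)}^2_{L^2}\d s<\infty$ by \eqref{eFB1bis} — a bound on the fixed limit $X$, not on the family $X^\kappa$. With this modification your argument goes through and coincides with the paper's; item c) is then read off from the retained $\kappa\norm{X^\kappa-X}^2_{L^2}$ term on the left once a) is established, rather than from the preliminary a priori estimate, which can be dispensed with entirely.
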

\begin{rem} \label{R7.5}
\begin{enumerate}[1)]
\item a) implies of course \\
$\lim_{\kappa \to 0}E\(\int^T_0 \d t \norm{X^\kappa (t,\;\cdot\;)-X(t,\;\cdot\;)}^2_{H^{-1}}\)=0.$
\item In particular Lemma \ref{L7.4} b) implies that for each sequence $(\kappa_n)\to 0$ there is a subsequence, still denoted by the same notation, that
\\ 
$\int\limits_{[0,T]\times\R} \(\psi (X^{\kappa_n} (t,\xi))-\psi(X(t,\xi))\)^2 \d t \d \xi \substack{\longrightarrow\\n\to\9}0 \ {\rm a.s.}$
\item 
For every $t\in[0,T]$
$X(t,\;\cdot\;) \geq 0 \quad  d\xi \otimes dP \text{a.e.}$
Indeed, for this it will be enough to show that a.s.
\begin{equation}\label{e7.6}
\int\limits_\R \d \xi \varphi(\xi)X(t,\xi) \ge 0 \text{ for every }
 \varphi \in \shs(\R),
\end{equation}
for every $t \in [0,T]$.
Since $X\in C\([0,T];\;\shs'(\R)\)$ it will be enough to show \eqref{e7.6} for almost all $t\in[0,T]$. 
This holds true since item 1) in this Remark
\ref{R7.5}, implies the existence of a sequence $(\kappa_n)$ such that \\
$\int\limits^T_0 \d t\norm{X^{\kappa_n}(t,\cdot) - X(t,\;\cdot\;)}^2_{H^{-1}}
\substack{\longrightarrow\\n\to\9}0, \quad {\rm a.s.} $
\item Since $\psi$ is strictly increasing after   $u_c$, 
then, for $P$ almost all $\o$, for almost all $(t,\xi)\in [0,T]\times\R$,
 there is a sequence $(\kappa_n)$ such that 
$ \(X^{\kappa_n}(t,\xi)-X(t,\xi)\) 1_{\{X(t,\xi)>u_c\}}
\substack{\longrightarrow\\n\to\9}0. $
\\
This follows from item 2) of Remark \ref{R7.5}.\\
Since $\Phi^2(u)=0$ for $0\leq u\leq u_c$ and $X$ is a.e. non-negative, 
this implies that $dt d\xi dP$ a.e. we have
\begin{equation}\label{e7.8}
\Phi^2\(X(t,\xi)\)\(X^{\kappa_n}(t,\xi)-X(t,\xi)\)\substack{\longrightarrow\\n\to\9}0.
\end{equation}
\end{enumerate}
\end{rem}
\begin{proof}[Proof (of Lemma \ref{L7.4})]
By Remark \ref{remB2bis} 3.
 we can write $\pas$ the following  $H^{-1}(\R)$-valued
 equality.
$$
\(X^\kappa -X\)(t,\;\cdot\;)= \int\limits^t_0 \d s\( \psi_\kappa \(X^\kappa(s,\;\cdot\;)\)-\psi \(X(s,\;\cdot\;)\)\)''
 + \sum^N_{i=0} \int\limits^t_0  
\(X^\kappa(s,\;\cdot\;)-X(s,\;\cdot\;)\) e^i \d W_s^i.
$$
So
\begin{align*}
(I-\Delta)^{-1} \(X^\kappa -X\)(t,\;\cdot\;)= & -\int\limits^t_0 \d s\( \psi_\kappa \(X^\kappa(s,\;\cdot\;)\)-\psi \(X(s,\;\cdot\;)\)\)\\
& + \int\limits^t_0 \d s (I-\Delta)^{-1} \( \psi_\kappa \(X^\kappa(s,\;\cdot\;)\)-\psi \(X(s,\;\cdot\;)\)\)\\
& + \sum^N_{i=0} \int\limits^t_0 (I-\Delta)^{-1} \( e^i \(X^\kappa(s,\;\cdot\;)-X(s,\;\cdot\;)\)\)\d W_s^i. 
\end{align*}
After regularization and application of It\^o calculus with values in $H^{-1}$, we will be able to estimate
 $g^\kappa(t)=\norm{\(X^\kappa - X\)(t,\;\cdot\;)}^2_{H^{-1}}$.
Taking advantage
of the form of $\psi_\kappa - \psi  $, 
we obtain
\begin{align}\label{e7.9}
g^\kappa(t) = & \sum^N_{i=1} \int\limits^t_0 \norm{e^i \(X^\kappa -X\)(s,\;\cdot\;)}^2_{H^{-1}} \d s\\
- & 2 \int\limits^t_0 \<\(X^\kappa -X\)(s,\;\cdot\;), \psi_\kappa \(X^\kappa(s,\;\cdot\;)\)-\psi \(X(s,\;\cdot\;)\)\>_{L^2}\\
+ & 2 \int\limits^t_0 \d s \< \(X^\kappa -X\)(s,\;\cdot\;),(I-\Delta)^{-1}
\left(\psi_\kappa \(X^\kappa(s,\;\cdot\;)\)-\psi \(X(s,\;\cdot\;)\) \right)
\>_{L^2}\\
+ & 2 \int\limits^t_0 \d s \< \(X^\kappa -X\)(s,\;\cdot\;),(I-\Delta)^{-1} e^0 \(X^\kappa -X\)(s,\;\cdot\;)\>_{L^2}  + M^\kappa_t,
\end{align}
where $M^\kappa$ is the local martingale
\begin{equation*}
M^\kappa_t = 2 \sum^N_{i=1} \int\limits^t_0 \< (I-\Delta)^{-1} \(X^\kappa -X\)(s,\;\cdot\;),\(X^\kappa -X\)(s,\;\cdot\;) e^i\>_{L^2} \d W^i_s.
\end{equation*}
Indeed, $M^\kappa$ is a well-defined local martingale because, taking into account
\eqref{eFB1} and Remark \ref{remB2bis}, using classical arguments, 
we can prove that
$$  \sum^N_{i=1} \int\limits^t_0 \vert \< \(  X^\kappa  -  X \vert\)
(s,\;\cdot\;),(I-\Delta)^{-1}\(X^\kappa -X\)(s,\;\cdot\;)
 e^i\>_{L^2} \vert^2 \d s < \infty \ {\rm a.s.}  $$
\eqref{e7.9} gives
\begin{align}
g^\kappa(t) & + 2\int\limits^t_0 \<\(X^\kappa-X\)(s,\;\cdot\;),\;\psi \(X^\kappa(s,\;\cdot\;)\)-\psi \(X(s,\;\cdot\;)\)\>_{L^2} \d s\\
& + 2\kappa \int\limits^t_0 \<\(X^\kappa-X\)(s,\;\cdot\;),\;\(X^\kappa-X\)(s,\;\cdot\;)\>_{L^2} \d s\\
  \leq & -2\kappa \int\limits^t_0 \d s\<\(X^\kappa-X\)(s,\;\cdot\;),\; X(s,\;\cdot\;)\>_{L^2} \d s
 + \sum^N_{i=1}\int\limits^t_0 \norm{e^i \(X^\kappa -X\)(s,\;\cdot\;)}^2_{H^{-1}} \d s\\
& + 2 \int\limits^t_0 \d s\<(I-\Delta)^{-1}\(X^\kappa -X\)(s,\;\cdot\;),\;\psi \(X^\kappa(s,\;\cdot\;)\)-\psi \(X(s,\;\cdot\;)\)\>_{L^2} 
\end{align}
\begin{align}
& + 2\kappa \int\limits^t_0 \d s\<(I-\Delta)^{-1}\(X^\kappa -X\)(s,\;\cdot\;),\;\(X^\kappa-X\)(s,\;\cdot\;)\>_{L^2}\\
& + 2\kappa \int\limits^t_0 \d s\<(I-\Delta)^{-1}\(X^\kappa -X\)(s,\;\cdot\;),\; X(s,\;\cdot\;)\>_{L^2}\\
& + 2 \int\limits^t_0 \d s\< (I-\Delta)^{-1} \(X^\kappa -X\)(s,\;\cdot\;),\;
\(e^0\(X^\kappa -X\)(s,\;\cdot\;)\)\>_{L^2}  + M^\kappa_t .
\end{align}
We use Cauchy-Schwarz and the inequality
$2\sqrt{\kappa} b \sqrt{\kappa}c\leq \kappa b^2 + \kappa c^2, $
with first \\
$
b  = \norm{X^\kappa(s,\;\cdot\;)-X(s,\;\cdot\;)}_{L^2}, \quad
c  = \norm{X(s,\;\cdot\;)}_{L^2}
$
and then \\
$
b  = \norm{X^\kappa(s, \cdot) -X (s,\cdot)}_{H^{-2}}, \quad
c  = \norm{X(s,\;\cdot\;)}_{L^2}.
$
We also take into account the property of $H^{-1}$-multiplier 
for $e^i,\; 0\leq i\leq N$. Consequently there is a constant
 ${\mathcal C}(e)$ depending on $(e^i,\; 0\leq i\leq N)$ such that
\begin{align}\label{e7.11}
g^\kappa(t) & + 2\int\limits^t_0 \<\(X^\kappa-X\)(s,\;\cdot\;),\;\psi \(X^\kappa(s,\;\cdot\;)\)-\psi \(X(s,\;\cdot\;)\)\>_{L^2} \d s\\
& + 2\kappa \int\limits^t_0 \norm{X^\kappa(s,\;\cdot\;)-X(s,\;\cdot\;)}^2_{L^2} \d s \\
& \leq  \kappa \int\limits^t_0 \norm{\(X^\kappa -X\)(s,\;\cdot\;)}^2_{L^2} \d s
 + \kappa \int\limits^t_0 \d s \norm{X(s,\;\cdot\;)}_{L^2}^2\\
& + {\mathrm C}(e)\int\limits^t_0 \d s \norm{X^\kappa(s,\;\cdot\;)-X(s,\;\cdot\;)}^2_{H^{-1}}\\ 
& + 2\int\limits^t_0 \norm{\(X^\kappa -X\)(s,\;\cdot\;)}_{H^{-2}} \norm{\psi \(X^\kappa(s,\;\cdot\;)\)-\psi \(X(s,\;\cdot\;)\)}_{L^2}\\
& + 2\kappa \int\limits^t_0 \d s g^\kappa(s)
 + \kappa \int\limits^t_0 \d s \norm{\(X^\kappa -X\) (s,\;\cdot\;)}_{H^{-2}}^2 
+ \kappa \int\limits^t_0 \d s \norm{X(s,\;\cdot\;)}_{L^2}^2
 + M^\kappa_t .
\end{align}
Since $\psi$ is Lipschitz, it follows
$ \(\psi(r)-\psi(r_1)\)(r-r_1) \geq \alpha\(\psi(r)-\psi(r_1)\)^2, $
for any $r, r_1 \ge 0$,
for some $\alpha>0$. Consequently, the inequality
$ 2bc \leq b^2 \alpha+\frac{c^2}{\alpha},$
with $b,c\in\R$ and the fact that $\norm{\;\cdot\;}_{H^{-2}}\leq \norm{\;\cdot\;}_{H^{-1}}$ give
\begin{align}\label{e7.12}
2 & \int\limits^t_0 \d s \norm{\(X^\kappa -X\)(s,\;\cdot\;)}_{H^{-2}} \norm{\psi \(X^\kappa(s,\;\cdot\;)\)-\psi \(X(s,\;\cdot\;)\)}_{L^2}\\
\leq & \int\limits^t_0 \d s \alpha g^\kappa (s,\;\cdot\;)+ \int\limits^t_0
 \d s\<\psi \(X^\kappa(s,\;\cdot\;)\)-\psi \(X(s,\;\cdot\;)\),\;X^\kappa(s,\;\cdot\;)-X(s,\;\cdot\;)\>_{L^2}.
\end{align}
So \eqref{e7.11} yields 
\begin{align}\label{e7.13}
g^\kappa(t) & + \int\limits^t_0 \< X^\kappa(s,\;\cdot\;)-X(s,\;\cdot\;),\;\psi \(X^\kappa(s,\;\cdot\;)\)-\psi \(X(s,\;\cdot\;)\)\>_{L^2} \d s\\
& + \kappa \int\limits^t_0 \d s \norm{X^\kappa(s,\;\cdot\;)-X(s,\;\cdot\;)}^2_{L^2} \d s\\
& \leq  2\kappa \int\limits^t_0 \d s \norm{X(s,\;\cdot\;)}^2_{L^2}
 + M^\kappa_t 
 + \({\mathrm C}(e) +\alpha+3\kappa\)\int\limits^t_0 g^\kappa (s)\d s.
\end{align}
Taking the expectation we get
$$ E(g^\kappa(t)) \le  \({\mathrm C}(e) + \alpha + 3\kappa\) \int\limits^t_0 E(g^\kappa (s)) \d s \\
+ 2 \kappa \int_0^t E(\Vert X(s,\cdot) \Vert^2_{L^2}) \d s, $$
for every $t\in[0,T]$.
By Gronwall lemma we get
\begin{equation} \label{e7.14bis}
E\(g_\kappa(t)\) \leq 2 \kappa E \left\{ \int\limits^T_0 \d s
\norm{X(s,\;\cdot\;)}^2_{L^2}\right\}e^{\({\mathrm C}(e) +\alpha+3\kappa\)T}, 
\quad \forall \; t\in [0,T].
\end{equation}
Taking the supremum and letting $\kappa \to 0$, item a) of Lemma \ref{L7.4}
 is now established. \\
We go on with item b). Since $\psi$ is Lipschitz, \eqref{e7.13} implies that,
 for $t\in [0,T]$,
\begin{align*}
& \int\limits^t_0 \d s\norm{\psi \(X^\kappa(s,\;\cdot\;)\)-\psi \(X(s,\;\cdot\;)\)}^2_{L^2}\\
\leq & \frac{1}{\alpha} \d s \<\psi \(X^\kappa(s,\;\cdot\;)\)-\psi \(X(s,\;\cdot\;)\),\; X^{(\kappa)}(s,\;\cdot\;)-X(s,\;\cdot\;)\>_{L^2}\\
\leq & \frac{\kappa}{2\alpha} \int\limits^t_0 \d s \norm{X(s,\;\cdot\;)}^2_{L^2}
 + {\mathrm C}(e,\alpha)  \int\limits^t_0 g_\kappa(s) \d s + M^\kappa_t,
\end{align*}
where ${\mathrm C}(e,\alpha)$ is a constant depending on
 $e^i, 0\leq i\leq N$ and $\alpha$.
Taking the expectation for $t=T$, we get
$$
 E\(\int\limits^T_0 \d s\norm{\psi \(X^\kappa(s,\;\cdot\;)\)-\psi \(X(s,\;\cdot\;)\)}_{L^2}^2\)
 \leq  \frac{\kappa}{2 \alpha}  E\(\int\limits^T_0 \d s\norm{X(s,\;\cdot\;)}^2_{L^2}\)+ {\mathrm C}(e,\alpha) \int\limits^T_0 E(g_\kappa(s))\d s.
$$
Taking $\kappa \to 0$, 
 \eqref{equater}  and \eqref{e7.14bis} provide the conclusion of
 item b) of Lemma \ref{L7.4}.
\begin{enumerate}[c)]
\item Coming back to \eqref{e7.13}, and $t=T$, we have 
$$\kappa \int\limits^T_0 \d s \norm{X^\kappa(s,\;\cdot\;)-X(s,\;\cdot\;)}_{L^2}^2
\leq
 2 \kappa \int\limits^T_0 \d s \norm{X(s,\;\cdot\;)}_{L^2}^2 + M^\kappa_T 
 + \({\mathrm C}(e) +\alpha+3\kappa\) \int\limits^T_0 \d s g^\kappa(s).
$$
Taking the expectation we have
$$
\kappa E\(\int\limits^T_0 \d s\norm{X^\kappa(s,\;\cdot\;)-X(s,\;\cdot\;)}_{L^2}^2\) 
\leq 
 2\kappa E \left( \int\limits^T_0 \d s\norm{X(s,\;\cdot\;)}^2_{L^2}\right)
 + \({\mathrm C}(e) +\alpha+3\kappa\) E\(\int\limits^T_0 g^\kappa(s) \d s\).
$$
Using item a) and the fact that
$E\(\int\limits_{[0,T]\times\R} X^2 (s,\xi)\d s \d \xi\)<\9, $
the result follows.
Lemma \ref{L7.4} is finally completely established.
\end{enumerate}
\end{proof}
We need now another intermediate lemma concerning the paths of a solution to \eqref{PME}.
\begin{lemma} \label{L7.6}
For almost all $\o\in\O$, almost all $t\in[0,T]$,
\begin{enumerate}[1)]
\item $\xi \mapsto \psi(X(t,\xi,\o))\in H^1(\R)$,
\item $\xi \mapsto \Phi(X(t,\xi,\o))$ is continuous.
\end{enumerate}
\end{lemma}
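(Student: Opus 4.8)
The plan is to transfer the regularity information established in Lemma \ref{L7.4} from the approximating solutions $X^\kappa$ to the limit $X$, using that $\psi$ is Lipschitz and strictly increasing after $u_c$. First I would recall from Remark \ref{remB2bis} (and the discussion preceding Theorem \ref{T73}) that, since $e^i \in H^1(\R)$ and $x_0 \in L^1 \cap L^2$, each $X^\kappa$ satisfies the energy estimate \eqref{equater}, so that $\d t\, \d P$-a.e.\ one has $\psi_\kappa(X^\kappa(t,\cdot)) \in H^1(\R)$ with a uniform bound $E\big(\int_0^T \norm{\psi_\kappa(X^\kappa(t,\cdot))}_{H^1}^2\, \d t\big) \le C$, where $C$ does not depend on $\kappa$ (this uses the non-degenerate theory of Section \ref{S6} applied to $\psi_\kappa$, together with the fact that the $H^1$-estimate for the non-linearity is obtained by testing against $\psi_\kappa(X^\kappa)$ and hence is $\kappa$-uniform). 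Note that $\psi_\kappa(u) - \psi(u) = \kappa u$, so $\norm{\psi_\kappa(X^\kappa(t,\cdot)) - \psi(X^\kappa(t,\cdot))}_{L^2} = \kappa \norm{X^\kappa(t,\cdot)}_{L^2}$.

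Next I would extract a subsequence (along a fixed sequence $\kappa_n \to 0$) realizing all the convergences of Lemma \ref{L7.4} simultaneously in the a.s.\ sense: by Remark \ref{R7.5} items 1)--2), for $P$-a.e.\ $\omega$ we have $\int_0^T \norm{X^{\kappa_n}(t,\cdot,\omega) - X(t,\cdot,\omega)}_{H^{-1}}^2\,\d t \to 0$ and $\int_0^T \norm{\psi(X^{\kappa_n}(t,\cdot,\omega)) - \psi(X(t,\cdot,\omega))}_{L^2}^2\,\d t \to 0$, and by item c) of Lemma \ref{L7.4}, $\kappa_n \int_{[0,T]\times\R}(X^{\kappa_n} - X)^2 \to 0$. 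Combining the $\kappa$-uniform $H^1$-bound on $\psi_\kappa(X^{\kappa})$ with the elementary bound $\norm{\psi_\kappa(X^{\kappa_n}(t,\cdot)) - \psi(X^{\kappa_n}(t,\cdot))}_{L^2} = \kappa_n \norm{X^{\kappa_n}(t,\cdot)}_{L^2} \to 0$ in the relevant integrated sense, one gets that $\psi(X^{\kappa_n}(t,\cdot,\omega))$ is bounded in $L^2([0,T];H^1(\R))$ for a.e.\ $\omega$; since it converges in $L^2([0,T];L^2(\R))$ to $\psi(X(t,\cdot,\omega))$, the weak-$*$ lower semicontinuity of the $L^2([0,T];H^1)$-norm forces $\psi(X(\cdot,\cdot,\omega)) \in L^2([0,T];H^1(\R))$, which after passing to a further full-measure subset of $t$ gives item 1): $\xi \mapsto \psi(X(t,\xi,\omega)) \in H^1(\R)$ for a.e.\ $(t,\omega)$.

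For item 2), the idea is that $\Phi$ and $\psi$ are linked by $\psi(u) = \Phi^2(u)\,u$ and $X$ is a.e.\ non-negative (Remark \ref{R7.5} item 3)). On the set where $X(t,\xi,\omega) > u_c$, strict monotonicity of $\psi$ means $\psi$ admits a continuous inverse there, so $X(t,\cdot,\omega)$ is a continuous function of $\psi(X(t,\cdot,\omega))$ on the (relatively open) region $\{\psi(X) > \psi(u_c)\}$; since $H^1(\R) \hookrightarrow C(\R)$ in dimension one, $\xi \mapsto \psi(X(t,\xi,\omega))$ is continuous by item 1), hence so is $X(t,\cdot,\omega)$ restricted to $\{X > u_c\}$, and therefore $\Phi(X(t,\cdot,\omega))$ is continuous there; on the complementary (closed) set $\{X \le u_c\}$ we have $\Phi \equiv 0$, and continuity across the boundary follows because $\Phi(u_c) = 0$ (Remark \ref{R7.2}) and $\Phi$ is continuous with $\lim_{u\to u_c^+}\Phi(u) = 0$. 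Glueing the two pieces gives global continuity of $\xi \mapsto \Phi(X(t,\xi,\omega))$.

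The main obstacle I anticipate is making the passage ``$\psi(X^{\kappa_n})$ bounded in $L^2([0,T];H^1)$ $\Rightarrow$ $\psi(X) \in L^2([0,T];H^1)$'' fully rigorous: one must be careful that the $\kappa$-uniform $H^1$-estimate really is available (it comes from the a priori estimates behind \eqref{equater}, which need $e^i \in H^1$ — precisely assumption 1 of Theorem \ref{T73} — and the Lipschitz bound on $\psi$, so that $\psi_\kappa$ has Lipschitz constant uniformly bounded as $\kappa \to 0$), and that weak compactness in $H^1$ together with strong $L^2$-convergence correctly identifies the weak limit with $\psi(X)$. Once that identification is in place, the rest is the soft inverse-function and Sobolev-embedding argument sketched above.
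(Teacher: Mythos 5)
Your argument for item 2) is sound and is essentially the argument the paper invokes: it simply cites Proposition 4.22 of \cite{BRR2}, which proceeds exactly as you do ($H^1(\R)\hookrightarrow C(\R)$ makes $\xi\mapsto\psi(X(t,\xi,\o))$ continuous; the continuous inverse of $\psi$ on $]u_c,\infty[$ gives continuity of $X$ on $\{X>u_c\}$; $\Phi$ vanishes on $[0,u_c]$, and the boundary is handled because $\psi(X(\xi_n))\to 0$ forces $X(\xi_n)\to u_c$, hence $\Phi(X(\xi_n))\to\Phi(u_c)=0$).

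For item 1), however, your route has a genuine gap, and it is also an unnecessary detour. The crux of your argument is the $\kappa$-uniform a priori bound $E\bigl(\int_0^T\norm{\psi_\kappa(X^\kappa(t,\cdot))}_{H^1}^2\,\d t\bigr)\le C$. Nothing in the paper establishes such a bound: Section \ref{S6} contains no energy estimates of this kind, Lemma \ref{L7.4} controls only $H^{-1}$- and $L^2$-type quantities, and the regularity statements of Remark \ref{remB2bis} come from \cite{BRR3} applied to a fixed nonlinearity, with no claim of uniformity in $\kappa$. To make your argument work you would have to re-derive the $H^1$-estimate of \cite{BRR3} (testing against $\psi_\kappa(X^\kappa)$) and verify that the constant depends only on $\mathrm{Lip}(\psi_\kappa)\le\mathrm{Lip}(\psi)+1$, $\norm{x_0}_{L^2}$ and the $e^i$; this is plausible but it is precisely the hard analytic content, and it is asserted rather than proved. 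The paper sidesteps the issue entirely: since $\psi$ itself is Lipschitz and $x_0\in L^1\cap L^2$, the existence and regularity theorem of \cite{BRR3} (Definition 3.2 and Theorem 3.4 there; cf.\ Remark \ref{remB2bis}) applies directly to the limit equation and already yields $\xi\mapsto\psi(X(t,\xi,\o))\in H^1(\R)$ for a.e.\ $t$, with no passage through the $X^\kappa$. Even granting your uniform bound, the $\o$-wise conclusion would need one more step (e.g.\ Fatou's lemma to get $\liminf_n\int_0^T\norm{\psi_{\kappa_n}(X^{\kappa_n}(t,\cdot))}_{H^1}^2\,\d t<\infty$ a.s.\ before extracting an $\o$-dependent weakly convergent subsequence), but that is minor compared with the missing estimate.
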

\begin{proof}[Proof]
Item 1) is established in \cite{BRR3}, see Definition 
3.2 and Theorem 3.4. 1) implies that $\xi 
\mapsto \psi(X(t,\xi,\o))$ is continuous.
See also Remark \ref{remB2bis} 1. 
By the same arguments as in Proposition 4.22 in \cite{BRR2}, we can deduce item 2).
\end{proof}
\begin{description}
\item{3)} We go on with the proof of Theorem \ref{T73}. 
We keep in mind i), ii), iii), iv) at the beginning of item 1)
of the proof.
 Since $\Phi$ is bounded,
for $P$-almost all $\omega$, 
 using Burkholder-Davis-Gundy inequality one obtains
\begin{equation} \label{TE3}
E^{Q^{\kappa}(\;\cdot\;,\o)}\(Y_t-Y_s\)^4 \leq \const(t-s)^2,
\end{equation}
where ${\rm const}$ does not depend on $\omega$.
On the other hand, for all 
$Q^\kappa (\;\cdot\;,\o), \; Y_0$ is distributed according to $x_0$.

At this point, we need a version of Kolmogorov-Centsov theorem for the
stable convergence. Let $\tilde \O_0 = \tilde \Omega_1 \times \Omega$
 as at the beginning of
the proof of Theorem \ref{T73}. We recall that 
$\tilde \Omega_1 = C([0,T]) \times \R$,
   $ Y(\omega_1,\omega) = \omega_1$, $ \shh$ is the Borel $\sigma$-field
on $\tilde \Omega_1$.

\begin{lemma} \label{TStableKolm}
Let be a sequence $Q^\kappa(\cdot,\omega)$ of random kernel on 
$\shh \times \Omega$. Let us denote by ${\bf Q}^\kappa$ the sequence
 of marginal laws of the probabilities 
 on $(\tilde \O_0,\shh \otimes \shf)$ given by $ Q^{\kappa}(\cdot, \omega)  P(\d \o) $.
Suppose the following.
\begin{itemize}
\item The sequences of marginal laws of the probabilities ${\bf Q}^\kappa$ 
at zero are tight.
\item There are $\alpha, \beta > 0$ such that
\begin{equation} \label{EStableKolm}
 E^{Q^\kappa(\cdot, \omega)} \vert Y_t - Y_s\vert^\alpha \le C(\o)(t-s)^{1+\beta},
\quad 0 \le s \le T,
\end{equation}
for some positive $P$-integrable random constant $C$.
\end{itemize}
Then there is  a random kernel $Q^\infty$ on $\shh \times \Omega$
 and a subsequence $(\kappa_n)$ such that
for every bounded continuous functional $G: \tilde \Omega_1 \rightarrow \R$,
for every bounded $\shf$-measurable r.v. $F: \Omega \rightarrow \R$,
we have
\begin{equation} \label{E1}
\begin{array}{ccc}
\int_\Omega F(\omega) \d P(\omega)&& \int_{\tilde \Omega_1} G(Y(\omega_1))
Q^{\kappa_n}(\d \omega_1, \omega) 
\rightarrow_{n \rightarrow \infty}  
\\
 &&\int_\Omega F(\omega) \d P(\omega)
 \int_{\tilde \Omega_1} G(Y(\omega_1)) Q^\infty(\d \omega_1,\omega). 
\end{array}
\end{equation}

\end{lemma}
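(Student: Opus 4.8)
The plan is to combine a uniform-in-$\kappa$ Kolmogorov-Centsov moment estimate with the elementary principle that, once the second marginal is frozen equal to $P$, relative compactness for the stable topology of a family of probabilities on $\tilde\Omega_0=\tilde\Omega_1\times\Omega$ amounts to ordinary tightness of their first marginals on the Polish space $\tilde\Omega_1=C([0,T])\times\R$; one then extracts a stably convergent subsequence and identifies the limit as coming from a random kernel.

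First I would prove tightness of the first marginals. For each $\kappa$ write $\bar Q^\kappa$ for the law of $Y$ under ${\bf Q}^\kappa$, i.e. $\bar Q^\kappa(\cdot)=\int_\Omega Q^\kappa(\cdot,\omega)\,\d P(\omega)$, a probability on $\tilde\Omega_1$. Integrating the hypothesis \eqref{EStableKolm} against $P$ and using that $C$ is $P$-integrable gives, for $0\le s\le t\le T$, the bound $E^{\bar Q^\kappa}\vert Y_t-Y_s\vert^\alpha\le\bigl(E^P[C]\bigr)\,(t-s)^{1+\beta}$, which does not depend on $\kappa$. Together with the assumed tightness of the initial ($Y_0$) marginals, the classical Kolmogorov-Centsov tightness criterion then yields that $(\bar Q^\kappa)_\kappa$ is tight on $\tilde\Omega_1$. (For a quantitative version one may instead invoke the Garsia--Rodemich--Rumsey inequality to obtain $\sup_\kappa E^{\bar Q^\kappa}\Vert Y\Vert_{C^\gamma}^\alpha<\infty$ for every $\gamma<\beta/\alpha$, which again implies tightness.)

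Next I would extract the subsequence and identify the limit. Since $\shf=\shf_T$ is, modulo $P$-null sets, generated by the paths of $(W^1,\dots,W^N)$, it is countably generated, so $L^1(\Omega,\shf,P)$ is separable; fix a countable family $(F_k)_k$ dense in its unit ball and containing the constant $1$. As $\tilde\Omega_1$ is Polish, fix a countable convergence-determining family $(G_j)_j\subset C_b(\tilde\Omega_1)$. The numbers $\int F_k\,G_j(Y)\,\d{\bf Q}^\kappa$ are uniformly bounded, so a diagonal argument produces a subsequence $(\kappa_n)$ along which $\int F_k\,G_j(Y)\,\d{\bf Q}^{\kappa_n}$ converges for all $j,k$. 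A routine tightness argument (Step 1) upgrades this to convergence of $\int F_k\,G(Y)\,\d{\bf Q}^{\kappa_n}$ for every bounded continuous $G$, and the uniform bound $\vert\int(F-F_k)\,G(Y)\,\d{\bf Q}^\kappa\vert\le\Vert G\Vert_\infty\,E^P\vert F-F_k\vert$ (valid because the second marginal of ${\bf Q}^\kappa$ is $P$) then upgrades it further to convergence of $\int F\,G(Y)\,\d{\bf Q}^{\kappa_n}=:\Lambda(F,G)$ for every bounded $\shf$-measurable $F$. For fixed $G$, the map $F\mapsto\Lambda(F,G)$ is linear with $\vert\Lambda(F,G)\vert\le\Vert G\Vert_\infty\,E^P\vert F\vert$, hence is represented by some $h_G\in L^\infty(\Omega,\shf,P)$ via $\Lambda(F,G)=E^P[F\,h_G]$; the map $G\mapsto h_G$ is linear, positivity-preserving and sends $1$ to $1$. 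Passing to jointly measurable versions and applying a Riesz-type representation --- tightness from Step 1 ensuring that the limiting positive functional on $C_b(\tilde\Omega_1)$ is a genuine probability measure, and that this holds for $P$-a.e.\ $\omega$ --- I obtain a random probability kernel $Q^\infty$ on $\shh\times\Omega$ with $h_G(\omega)=\int_{\tilde\Omega_1}G(Y(\omega_1))\,Q^\infty(\d\omega_1,\omega)$ for $P$-a.e.\ $\omega$, which is precisely \eqref{E1}.

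The hard part is the passage from the limiting functional $\Lambda$ to the kernel $Q^\infty$: it requires a measurable disintegration/selection, and one must rule out any loss of mass in the limit, so that $Q^\infty(\cdot,\omega)$ really is a probability for $P$-a.e.\ $\omega$ --- it is precisely here that the tightness of Step 1, hence ultimately the $P$-integrability of $C$, is indispensable. Some care is also needed in the extraction step when going from the countable test families to all bounded continuous $G$ and all bounded measurable $F$: uniform integrability is used in the $F$-direction (since $L^\infty(P)$ is not separable) and tightness in the $G$-direction.
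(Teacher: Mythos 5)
Your proof is correct and follows essentially the same route as the paper's: integrate the Kolmogorov moment bound over $P$ to get a uniform bound and hence tightness, extract a subsequence by a diagonal argument over countable families of test functions $F$ and $G$ (using that the second marginal of every ${\bf Q}^\kappa$ is $P$ to pass from a countable dense family of $F$'s to all bounded $\shf$-measurable $F$), and disintegrate the limit into a random kernel. The only real difference is in the last step and is presentational: the paper proves joint tightness of the ${\bf Q}^{\kappa}$ on the Polish space $\tilde\Omega_1\times C([0,T])^N$ (identifying $\shf$, mod null sets, with the $\sigma$-field generated by $W$), takes a weak limit of the joint laws and then invokes a disintegration theorem, whereas you never form the joint limit and instead build $Q^\infty$ from the bilinear limit functional $\Lambda(F,G)$ via Riesz representation --- both constructions rest on the same separability and tightness facts, and your explicit remark that tightness of the first marginals prevents loss of mass is exactly the point the paper's citation of the disintegration result is covering.
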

\begin{proof}

Taking the expectation with respect to $P$ we obtain
 \begin{equation} \label{EStableKolmbis}
 E^{{\bf Q} ^\kappa} (Y_t - Y_s)^\alpha \le C_0 (t-s)^{1+\beta},
\quad 0 \le s \le T,
\end{equation}
where $C_0$ is the expectation of $C$.
First, by usual arguments as Chebyshev inequality, one can show 
the following: 
\begin{eqnarray*} 
\lim_{\lambda \rightarrow \infty} \sup_{\kappa}
 {\bf Q}^{\kappa}\{(\o_1,\omega) \vert \vert (W^1,\ldots,W^N)(\o)(0)\vert > \lambda; 
\vert \omega_1(0)\vert
 > \lambda \} &=& 0, \\
\lim_{\delta \rightarrow 0} \sup_{\kappa}
    {\bf Q}^{\kappa} \{ (\o_1, \omega) 
\vert m((W^1,\ldots,W^N, \omega_1);
\delta) > \varepsilon \} &=& 0,
\forall \varepsilon > 0,
\end{eqnarray*}
where $m$ denotes the modulus of continuity.
By Theorem 4.10 of \cite{KARSH},  the sequences of probabilities
 $ {\bf Q}^{\kappa}, \kappa >0$, on $\tilde \Omega_1 \times \Omega$ are tight. 
Let ${\bf Q}^{\kappa_n}$ be  a sequence converging weakly to a probability
${\bf Q}^\infty$ on $\shh \otimes \shf$.
Since $\shf$ is separable and $C([0,T])^N$, which is space value
of process $W$, is a Polish space equipped with its Borel $\sigma$-algebra,
according to \cite{ripley},
 it is possible to desintegrate ${\bf Q}^\infty$,
i.e. there is random kernel $Q^\infty(\cdot, \omega)$ such that 
for every bounded continuous functional $G: \tilde \Omega_1 \rightarrow \R$,
for every bounded  continuous  $\tilde F: C([0,T])^N \rightarrow \R$
such that $\eqref{E1}$ holds for every $F = \tilde F(W)$,
where $W= (W^1,\ldots, W^N)$.
Since continuous bounded functionals $\tilde F$ are dense in
$L^2(C([0,T])^N$ equipped with Wiener measure, (\ref{E1}) 
holds also for any $F$ bounded $\shf$-measurable r.v.
with ${\bf Q}^\infty(\d \o_1, \d \o)  = Q^\infty(\d \o_1, \omega)  P(\d \o)$.
\end{proof}

By \eqref{TE3}, 
we apply Lemma \ref{TStableKolm} with $\alpha = 2, \beta = 1$
  and we consider the
corresponding $Q^{\kappa_n}(\cdot,\omega)$ and the limit
random kernel  $Q(\cdot, \omega) := Q^{\infty} (\cdot,\omega)$.
We define also the probability ${\bf Q} := {\bf Q}^\infty$  on
 $\tilde \Omega_0 = 
\tilde \Omega_1 \times \Omega$
according to the conventions introduced before Definition \ref{DSEPS}.
In the sequel we denote again by $\d {\bf Q}^\kappa(\o_1, \o):= \d P (\o)
 Q^\kappa(\d \o_1,  \o)$
and also  $Q^{\o,\kappa}:= Q^\kappa(\cdot, \o)$,
$Q^{\o}:= Q(\cdot, \o)$.




From Lemma \ref{TStableKolm} derives the following.
\begin{cor} \label{C7.6}
For  any bounded random element $F: \tilde \Omega_1 \times \Omega 
 \rightarrow \R$ such that
for almost all $\o \in \Omega$, $F(\cdot, \o) \in C(\tilde \Omega_1)$.
Then \\
$ \int_\Omega \d P(\omega) \int_{\tilde \Omega_1} 
  \left(\d Q^{\o,\kappa_n}(\o_1) - \d Q^{\o}(\o_1)\right)  F(Y,\o)$
 converges to zero.
\end{cor}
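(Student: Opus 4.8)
The plan is to obtain Corollary~\ref{C7.6} from the stable convergence \eqref{E1} of Lemma~\ref{TStableKolm}, by the standard extension of such a convergence from product-type test functions to Carath\'eodory functions (bounded, jointly measurable, continuous in the $\tilde\Omega_1$-variable; here $Y$ is the canonical process $Y(\omega_1)=\omega_1$, so $F(Y,\omega)$ is read as $(\omega_1,\omega)\mapsto F(\omega_1,\omega)$). Put ${\bf Q}^{\kappa_n}(\d\omega_1,\d\omega):=Q^{\kappa_n}(\d\omega_1,\omega)\,P(\d\omega)$ and ${\bf Q}^\infty(\d\omega_1,\d\omega):=Q^\infty(\d\omega_1,\omega)\,P(\d\omega)$ on $\tilde\Omega_1\times\Omega$; each has $\Omega$-marginal $P$, and the quantity to be shown to vanish is $\int F\,\d{\bf Q}^{\kappa_n}-\int F\,\d{\bf Q}^\infty$. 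By linearity, \eqref{E1} already yields $\int H\,\d{\bf Q}^{\kappa_n}\to\int H\,\d{\bf Q}^\infty$ for every $H(\omega_1,\omega)=\sum_{j=1}^m g_j(\omega_1)Z_j(\omega)$ with $g_j\in C_b(\tilde\Omega_1)$ and $Z_j$ bounded $\shf$-measurable.

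First I would localise in the $\tilde\Omega_1$-variable. The $\tilde\Omega_1$-marginals of the ${\bf Q}^{\kappa_n}$ are tight (being projections of the tight family produced inside the proof of Lemma~\ref{TStableKolm}), and by \eqref{E1} with $F\equiv1$ they converge weakly to the $\tilde\Omega_1$-marginal of ${\bf Q}^\infty$, which is therefore also tight (Prokhorov). Hence, given $\varepsilon>0$, there is a compact $K\subset\tilde\Omega_1$ with $\sup_n{\bf Q}^{\kappa_n}(K^c\times\Omega)<\varepsilon$ and ${\bf Q}^\infty(K^c\times\Omega)<\varepsilon$; since $F$ is bounded, the parts of both integrals over $K^c\times\Omega$ are $O(\varepsilon)$, uniformly in $n$, so it remains to approximate $F$ on $K\times\Omega$.

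On $K\times\Omega$ I would use a Bochner-type argument. As $K$ is a compact metric space, $C(K)$ is separable, and by joint measurability of $F$ and continuity of $F(\cdot,\omega)$ the bounded map $\omega\mapsto F(\cdot,\omega)|_K\in C(K)$ is strongly measurable (Pettis' theorem), hence a $C(K)$-norm a.e.\ limit of simple functions $\omega\mapsto\sum_j\mathbf{1}_{A_{j,m}}(\omega)g_{j,m}$ with $(A_{j,m})_j$ an $\shf$-partition and uniform sup-norm bounds. Extending each $g_{j,m}$ to some $\tilde g_{j,m}\in C_b(\tilde\Omega_1)$ (Tietze--Urysohn, preserving the sup-norm bound), the product-type functions $\tilde F_m:=\sum_j \tilde g_{j,m}\,\mathbf{1}_{A_{j,m}}$ agree with the above simple approximation on $K\times\Omega$, satisfy $\sup_n\int_{K\times\Omega}|F-\tilde F_m|\,\d{\bf Q}^{\kappa_n}\le\delta_m\to0$ and likewise for ${\bf Q}^\infty$, where $\delta_m:=\int_\Omega\|F(\cdot,\omega)|_K-\sum_j\mathbf{1}_{A_{j,m}}(\omega)g_{j,m}\|_{C(K)}\,P(\d\omega)$, and obey $\int\tilde F_m\,\d{\bf Q}^{\kappa_n}\to\int\tilde F_m\,\d{\bf Q}^\infty$ by the product-type case. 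Assembling the three estimates gives $\limsup_n|\int F\,\d{\bf Q}^{\kappa_n}-\int F\,\d{\bf Q}^\infty|\le C\varepsilon+2\delta_m$, with $C$ depending only on $\|F\|_\infty$; letting $m\to\infty$ and then $\varepsilon\to0$ concludes the proof.

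I expect the only genuinely delicate point to be this last approximation, where one must simultaneously handle the non-compactness of $\tilde\Omega_1$ (taken care of by tightness, available since it is established inside Lemma~\ref{TStableKolm}) and the joint dependence of $F$ on both variables (taken care of by the simple-function approximation in $C(K)$, using that \eqref{E1} already allows arbitrary bounded $\shf$-measurable coefficients in the $\omega$-variable). The rest is routine $\varepsilon$-bookkeeping.
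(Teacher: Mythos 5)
Your proposal is correct and follows essentially the same route as the paper's own proof: localisation via the tightness established in Lemma \ref{TStableKolm}, approximation of the strongly measurable map $\o \mapsto F(\cdot,\o)\vert_K \in C(K)$ by simple functions (the paper cites Lemma A.1.4 of \cite{prevot} for exactly this step), Tietze extension to product-type test functions covered by \eqref{E1}, and the final three-term $\varepsilon$-bookkeeping. The only cosmetic difference is that you phrase the approximation error as an $L^1(P)$ quantity $\delta_m$ while the paper uses pointwise-in-$\o$ uniform convergence on $K_n$ plus dominated convergence, which amounts to the same thing.
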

\begin{proof} \  See Appendix A. \end{proof}



\end{description}
We need here a technical lemma.
\begin{lemma} \label{L7.6bis}
Let $t \in [0,T],$  $p \in \R$.
\begin{enumerate}
\item 
There is $C(p) > 0$ such that
 \begin{equation} \label{e7.24ter}
 E^{{\bf Q}^\kappa} \(\she_t \( \int\limits^\cdot_0 \mu (\d s,Y_s)\)^p\) \le C(p), \ \forall
 \kappa > 0.
\end{equation}
\item 
 For almost all $\o \in \Omega$, and every $p \in \R$ 
 there is a random constant $C(p,\omega)$ 
 such that
  the random variables
 \begin{equation} \label{e7.24bis}
  E^{Q^{\o, \kappa}} \(\she_t \( \int\limits^\cdot_0 \mu (\d s,Y_s)\)^p\)
 \le C(p,\omega), \
\forall \kappa > 0.
 \end{equation}
\end{enumerate}
 \end{lemma}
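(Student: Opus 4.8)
The plan is to expand the Dol\'eans exponential, isolate a genuine exponential martingale factor and dominate everything else deterministically; items~1 and~2 are then obtained the same way, the sole difference being the reason why that factor integrates to $1$. Recalling the convention $W^0_s\equiv s$, write $Z_t:=\she_t\big(\int_0^\cdot\mu(\d s,Y_s)\big)=\exp\big(\int_0^t e^0(Y_s)\,\d s\big)M_t$ with $M_t:=\she_t\big(\sum_{i=1}^N\int_0^\cdot e^i(Y_s)\,\d W^i_s\big)$, so that for every real $p$
$$Z_t^p=\exp\Big(p\int_0^t e^0(Y_s)\,\d s\Big)\;\she_t\Big(p\sum_{i=1}^N\int_0^\cdot e^i(Y_s)\,\d W^i_s\Big)\;\exp\Big(\frac{p^2-p}{2}\sum_{i=1}^N\int_0^t e^i(Y_s)^2\,\d s\Big).$$
Since the $e^i$ are bounded, the first factor is $\le\exp(|p|T\norm{e^0}_\9)$ and the third is $\le\exp\big(\tfrac{|p^2-p|}{2}T\sum_{i=1}^N\norm{e^i}_\9^2\big)$, both independent of $\kappa$; only the middle factor must be controlled.

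For item~1 the middle factor is a true ${\bf Q}^\kappa$-martingale: by the construction of ${\bf Q}^\kappa$ (Proposition~\ref{P4.1} applied with $\psi_\kappa$) the $W^1,\dots,W^N$ are $(\shg_t)$-Brownian motions under ${\bf Q}^\kappa$, and its bracket $p^2\sum_{i=1}^N\int_0^\cdot e^i(Y_s)^2\,\d s$ is dominated by the deterministic constant $p^2T\sum_{i=1}^N\norm{e^i}_\9^2$, so Novikov's criterion applies and its ${\bf Q}^\kappa$-expectation equals $1$. Taking ${\bf Q}^\kappa$-expectations then gives $E^{{\bf Q}^\kappa}(Z_t^p)\le C(p):=\exp\big(|p|T\norm{e^0}_\9+\tfrac{|p^2-p|}{2}T\sum_{i=1}^N\norm{e^i}_\9^2\big)$, manifestly free of $\kappa$.

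Item~2 is the delicate point: under the quenched measure $Q^{\o,\kappa}:=Q^\kappa(\cdot,\o)$ the paths $s\mapsto W^i_s(\o)$ are frozen, so the martingale argument above no longer applies. I would argue pathwise for $P$-a.e.\ $\o$, exactly as in the estimate~\eqref{e4.4TER} in the proof of Proposition~\ref{P4.2}. Since $Y(\cdot,\o)$ is a $Q^{\o,\kappa}$-semimartingale with $Y_t=Y_0+\int_0^t\Phi_\kappa(X^\kappa(s,Y_s,\o))\,\d B^\kappa_s$ (hence $\d[Y]_s=\Phi_\kappa^2(X^\kappa(s,Y_s,\o))\,\d s$), while $e^i\in C^2_{\rm b}$ and $[W^i,Y]=0$ by Proposition~\ref{PB1}, integration by parts yields
$$\int_0^t e^i(Y_s)\,\d W^i_s=e^i(Y_t)W^i_t-\int_0^t W^i_s(e^i)'(Y_s)\Phi_\kappa(X^\kappa(s,Y_s,\o))\,\d B^\kappa_s-\frac12\int_0^t W^i_s(e^i)''(Y_s)\Phi_\kappa^2(X^\kappa(s,Y_s,\o))\,\d s.$$
Using $\sup_{s\le T}|W^i_s(\o)|<\9$ for $P$-a.e.\ $\o$ and $\Phi_\kappa^2\le\norm{\Phi}_\9^2+\kappa$, the right-hand side splits into a term bounded in modulus by a constant $c_1(p,\o,\kappa)$ depending on $\o$ only through the $\sup_{s\le T}|W^i_s(\o)|$, plus a continuous $Q^{\o,\kappa}$-local martingale $L^\o$ driven by $B^\kappa$ with $\langle L^\o\rangle_T\le c_2(p,\o,\kappa)$ deterministic. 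Dropping the non-positive term $-\tfrac{p^2}{2}\sum_{i=1}^N\int_0^t e^i(Y_s)^2\,\d s$ inside the middle factor and writing $\exp(L^\o_t)=\exp(\tfrac12\langle L^\o\rangle_t)\she_t(L^\o)$, we get $Z_t^p\le\const(p,\o,\kappa)\,\she_t(L^\o)$; since $\langle L^\o\rangle_T$ is bounded and $B^\kappa(\cdot,\o)$ is a $Q^{\o,\kappa}$-Brownian motion, Novikov again makes $\she_\cdot(L^\o)$ a true $Q^{\o,\kappa}$-martingale, so $E^{Q^{\o,\kappa}}\she_t(L^\o)=1$ and $E^{Q^{\o,\kappa}}(Z_t^p)\le\const(p,\o,\kappa)$. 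As this constant involves $\kappa$ only through the additive $\kappa$ in $\Phi_\kappa^2$, it stays bounded for $\kappa$ in any fixed bounded interval, yielding the desired $\kappa$-free bound $C(p,\o)$ (which is all that matters, since $\kappa\to0$).

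The step I expect to be the main obstacle is this last one, namely giving meaning to and estimating $\int_0^t e^i(Y_s)\,\d W^i_s$ under $Q^{\o,\kappa}$, where $W^i$ is a fixed Brownian trajectory and not a semimartingale for the filtration used in the construction, so that it cannot be bounded pathwise directly. The integration-by-parts identity, legitimate thanks to $[W^i,Y]=0$, is what circumvents this, converting that object into a $B^\kappa$-stochastic integral with deterministically controlled bracket plus explicitly bounded boundary and drift terms, after which Novikov's criterion finishes the estimate.
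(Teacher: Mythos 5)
Your proposal is correct and follows essentially the same route as the paper: item~1 via factoring the Dol\'eans exponential and using that the exponential of the martingale part has expectation one, and item~2 via the pathwise integration by parts based on $[W^i,Y]=0$ (the paper's $J_1,J_2$ decomposition), converting the frozen-$W^i$ integral into a $\d Y$-integral that is completed to a $Q^{\o,\kappa}$-exponential martingale with deterministically bounded correction. Your closing remark about uniformity only for $\kappa$ in a bounded interval matches what the paper itself does (it uses $\Phi_\kappa^2\le\norm{\Phi}_\infty^2+1$), so this is not a discrepancy in substance.
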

\begin{proof}
\ Without restriction of generality we can of course suppose $e^0=0$.
\begin{enumerate} 
\item
 We can write 
\begin{eqnarray*} 
\she_t \( \int\limits^\cdot_0 \mu (\d s,Y_s)\)^p &=& 
\she_t \( p \int\limits^\cdot_0 \mu (\d s,Y_s)\) 
\exp \left(\frac{p^2 - p}{2} \sum_{i=1}^N \(\int\limits^t_0 e^i 
 (Y_s)^2 \d s\)\right)  \\ & \le &  \she_t \( p \int\limits^\cdot_0 \mu (\d s,Y_s)\) 
 \exp\(T \frac{p^2-p}{2}  \sum_{i=1}^N \Vert e^i \Vert^2_\infty\).
\end{eqnarray*}
Since  $p \int\limits^t_0 \mu (\d s,Y_s)$  is a
 $(\shg_t)$-{\bf Q}$^\kappa$-martingale,
the result follows.
\item Let $\o \in \O$ excepted on a $P$-null set.
 The integrand  of the expectation in \eqref{e7.24bis}
equals  $ \exp \( J_1(n)+J_2(n)\),$
 where 
 $$
 J_1(n)  :=  p\sum^N_{i=1} \( W^i_t e^i (Y_t)-\frac{1}{2} \int\limits^t_0 e^i 
 (Y_s)^2 \d s
  - \frac{1}{2} \int\limits^t_0 W^i_s (e^i)'' (Y_s) \Phi^2 \(X (s,Y_s,\o)\) \d s\) 
 $$
 and 
  $J_2(n) = - p\sum_{i=1}^N \int\limits^t_0 W^i_s (e^i)' (Y_s)\d Y_s$. 
 For each $\o,\; \exp \( (J_1(n)\) $ is bounded, so
 it remains to prove the existence of
 a random constant $C(p,\o)$ such that
  for every $0\leq i\leq N$
 \begin{equation}\label{e7.24quater}
  E^{Q^{\o, \kappa}} \(\exp \(- p\int\limits^t_0 W^i_s (e^i)' (Y_s)\d Y_s\)\) 
 \le C(p,\o). 
\end{equation}

 Since $-p\int\limits^t_0 W^i_s (e^i)' (Y_s)\d Y_s $ is a 
 $Q^{\o,\kappa}$-martingale,
 \begin{equation*}
 \she_t^\kappa:=  \exp\(- p\int\limits^t_0 W^i_s (e^i)' (Y_s)\d Y_s
  -  \frac{p^2}{2} \int\limits^t_0 (W^{i})^2_s (e^i)^{'2} (Y_s)
 \Phi^2_{\kappa}\(X^{\kappa}(s,Y_s,\o)\)\d s\)
 \end{equation*}
 is an (exponential) martingale, with respect to $Q^{\o, \kappa}$.
 Consequently the left-hand side of \eqref{e7.24quater} is bounded by
 \begin{align*}
 & E^{Q^{\o, \kappa}} \(\she^\kappa_t
 \exp \( \frac{p^2}{2}\int\limits^t_0 (W^{i})^2_s ((e^{i})')^{2}
  (Y_s)
  \Phi^2_{\kappa}(X^{\kappa}(s,Y_s,\o))\d s\) \)\\
 \leq & C(p,\cdot):= 
 \exp \(\frac{p^2}{2} \norm{(e^{i})'}^2_\9 \(\norm{\Phi}^2_\9 + 1\) 
 \int\limits^T_0 (W^{i}_s)^2 \d s\).
 \end{align*}
 This concludes the proof of Lemma \ref{L7.6bis}.
\end{enumerate}
\end{proof}

\begin{lemma} \label{L7.7} We fix $\o \in \O$ excepted on some $P$-null set.
Let $\varphi: [0,T] \times \R \rightarrow \R$
continuous with compact support.
The random variables
 \begin{equation}\label{e7.16bis}
 E^{Q^{\o, \kappa}}\left( \int^T_0  \left \vert \Phi_{\kappa}
 \(X^{\kappa}(r,Y_r,\o)\)-
 \Phi\(X (r,Y_r,\o)\) \right \vert  \varphi(r,Y_r) \d r \right)
 \end{equation}
converge to zero a.s. and in  $L^p(\Omega, P)$ for every $p \ge 1$,
when $\kappa \rightarrow 0$.
\end{lemma}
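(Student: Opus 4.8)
The plan is to trade the ``raw'' expectation $E^{Q^{\o,\kappa}}$ for an integral against the weighted law with density $X^{\kappa}(r,\cdot\,,\o)$ via a Cauchy--Schwarz argument, and then to exploit the convergences of Lemma~\ref{L7.4} and Remark~\ref{R7.5} together with the identity $\psi(u)=u\Phi^{2}(u)$, whose role is that the weight $\Phi^{2}$ annihilates exactly the part of $X^{\kappa}$ we cannot control. Denote by $I_{\kappa}(\o)$ the random variable in~\eqref{e7.16bis} and write $\she_{r}$ for $\she_{r}\bigl(\int_{0}^{\cdot}\mu(\d s,Y_{s})\bigr)$. Since $\Phi$ is bounded and $\Phi_{\kappa}=\sqrt{\Phi^{2}+\kappa}$, one has $0\le\Phi_{\kappa}-\Phi\le\sqrt{\kappa}$, so, $Q^{\o,\kappa}$ being a probability and $\varphi$ bounded, $I_{\kappa}(\o)\le\sqrt{\kappa}\,\norm{\varphi}_{\9}T+\int_{0}^{T}E^{Q^{\o,\kappa}}\bigl(h_{\kappa}(r,Y_{r})\bigr)\,\d r$ with $h_{\kappa}(r,\xi)=\bigl|\Phi(X^{\kappa}(r,\xi,\o))-\Phi(X(r,\xi,\o))\bigr|\,\varphi(r,\xi)$ bounded. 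Inserting $\she_{r}^{1/2}\she_{r}^{-1/2}$, applying Cauchy--Schwarz, using that the $\mu$-marginal weighted law of $Y^{\kappa}_{r}$ is $X^{\kappa}(r,\cdot\,,\o)\,\d\xi$ (item~iv) of the construction and Definition~\ref{mylau}) together with $E^{Q^{\o,\kappa}}(\she_{r}^{-1})\le C(-1,\o)$ from Lemma~\ref{L7.6bis}\,(2), and finally Cauchy--Schwarz in $r$, I obtain
\begin{equation*}
I_{\kappa}(\o)\ \le\ \sqrt{\kappa}\,\norm{\varphi}_{\9}T\ +\ \sqrt{T\,C(-1,\o)}\,\Bigl(\,\int_{0}^{T}\!\!\int_{\R}\bigl(\Phi(X^{\kappa})-\Phi(X)\bigr)^{2}\varphi^{2}\,X^{\kappa}\,\d\xi\,\d r\Bigr)^{1/2}\ =:\ \sqrt{\kappa}\,\norm{\varphi}_{\9}T+\sqrt{T\,C(-1,\o)}\;J_{\kappa}(\o)^{1/2}.
\end{equation*}
Since $C(-1,\o)<\9$ $P$-a.s., everything reduces to showing that $J_{\kappa}$ tends to $0$ in a suitable sense.

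Next I would bound $J_{\kappa}$. From $\Phi(u)\sqrt u=\sqrt{\psi(u)}$ ($u\ge0$) one gets $\bigl(\Phi(X^{\kappa})-\Phi(X)\bigr)^{2}X^{\kappa}=\bigl(\sqrt{\psi(X^{\kappa})}-\Phi(X)\sqrt{X^{\kappa}}\bigr)^{2}\le2\bigl(\sqrt{\psi(X^{\kappa})}-\sqrt{\psi(X)}\bigr)^{2}+2\Phi(X)^{2}\bigl(\sqrt{X}-\sqrt{X^{\kappa}}\bigr)^{2}$, and, by $(\sqrt a-\sqrt b)^{2}\le|a-b|$, the first summand is $\le 2|\psi(X^{\kappa})-\psi(X)|$. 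Thus, with $K:=\supp\varphi$ compact, $J_{\kappa}\le 2A_{\kappa}+2B_{\kappa}$ where $A_{\kappa}=\int_{0}^{T}\!\int_{K}|\psi(X^{\kappa})-\psi(X)|\,\varphi^{2}$ and $B_{\kappa}=\int_{0}^{T}\!\int_{K}\Phi(X)^{2}\bigl(\sqrt X-\sqrt{X^{\kappa}}\bigr)^{2}\varphi^{2}$. The term $A_{\kappa}$ is easy: by Cauchy--Schwarz $A_{\kappa}\le\norm{\varphi}_{\9}^{2}|K|^{1/2}\,\norm{\psi(X^{\kappa})-\psi(X)}_{L^{2}([0,T]\times\R)}$, so $E[A_{\kappa}]\to0$ by Lemma~\ref{L7.4}\,(b), and along a subsequence $(\kappa_{n})$ also $A_{\kappa_{n}}\to0$ $P$-a.s.\ (Remark~\ref{R7.5}\,(2)).

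The cross term $B_{\kappa}$ is the crux, and I expect it to be the main obstacle, because we do not have $L^{2}$- or $L^{1}_{\mathrm{loc}}$-convergence of $X^{\kappa}$ to $X$ — only the $H^{-1}$-convergence of Lemma~\ref{L7.4}\,(a). Here I would use that, along a subsequence $(\kappa_{n})$, for $P$-a.e.\ $\o$ and a.e.\ $(r,\xi)$ one has both $X^{\kappa_{n}}\to X$ on $\{X>u_{c}\}$ (Remark~\ref{R7.5}\,(4)) and $\Phi^{2}(X)(X^{\kappa_{n}}-X)\to0$ (equation~\eqref{e7.8}). Since $\Phi$ is continuous and $\Phi(X)=0$ on $\{X\le u_{c}\}$ (Remark~\ref{R7.2} for $u_{c}>0$; $\Phi(0)=0$ otherwise), the integrand $\Phi(X)^{2}(\sqrt X-\sqrt{X^{\kappa_{n}}})^{2}\varphi^{2}$ tends to $0$ a.e., and it is dominated by $2\norm{\Phi}_{\9}^{2}\norm{\varphi}_{\9}^{2}(X+X^{\kappa_{n}})\mathbf{1}_{K}$. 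Now $X\in L^{2}([0,T]\times\R)$ a.s.\ by~\eqref{e6.1}, and the a priori energy estimate for the approximating equations can be taken uniform in $\kappa$ (since $\psi_{\kappa}(u)=\psi(u)+\kappa u$ is uniformly Lipschitz for $\kappa\le1$; cf.\ Remark~\ref{R6.2}), i.e.\ $\sup_{0<\kappa\le1}E\int_{0}^{T}\norm{X^{\kappa}(s,\cdot)}_{L^{2}}^{2}\,\d s<\9$; hence $\{(X+X^{\kappa_{n}})\mathbf{1}_{K}\}_{n}$ is uniformly integrable on $\Omega\times[0,T]\times K$ with respect to $\d P\,\d r\,\d\xi$. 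Vitali's convergence theorem then yields $E[B_{\kappa_{n}}]\to0$. (Conceptually, $\Phi^{2}(X)$ suppresses $\{X\le u_{c}\}$, on which $X^{\kappa}$ need not converge, which is precisely where the degeneracy hypothesis on $\psi$ is used.)

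Finally I would assemble the pieces: $E[J_{\kappa_{n}}]\le 2E[A_{\kappa_{n}}]+2E[B_{\kappa_{n}}]\to0$, so $J_{\kappa_{n}}\to0$ in $L^{1}(\Omega,P)$ and, along a further subsequence, $P$-a.s.; by the displayed bound and $C(-1,\o)<\9$ a.s., $I_{\kappa_{n}}(\o)\to0$ for $P$-a.e.\ $\o$. Since $0\le I_{\kappa}\le2\sqrt{\norm{\Phi}_{\9}^{2}+1}\,\norm{\varphi}_{\9}T$ uniformly in $\kappa\le1$ and $\o$, bounded convergence upgrades this to $I_{\kappa_{n}}\to0$ in $L^{p}(\Omega,P)$ for every $p\ge1$; the limit being independent of the extracted subsequence, $I_{\kappa}\to0$ in each $L^{p}(\Omega,P)$ as $\kappa\to0$, which together with the a.s.\ statement is the assertion of the lemma.
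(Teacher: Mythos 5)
Your proof is correct and follows essentially the same route as the paper's: after stripping off $0\le\Phi_\kappa-\Phi\le\sqrt\kappa$, you reduce via Cauchy--Schwarz and the exponential-moment bounds of Lemma \ref{L7.6bis} to an integral of $(\Phi(X^\kappa)-\Phi(X))^2$ against the density $X^\kappa$ of the $\mu$-weighted law, and then split it using $\psi(u)=u\,\Phi^2(u)$ into a $\psi$-difference term (Lemma \ref{L7.4} b)), a term annihilated by $\Phi^2(X)$ on the degenerate set (Remark \ref{R7.5} and \eqref{e7.8}) handled by Vitali, and $O(\kappa)$ remainders --- which is exactly the paper's decomposition into $I_{11},\dots,I_{14}$ in a slightly different algebraic dress. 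The one place where you assert more than the paper supplies is the uniform bound $\sup_{0<\kappa\le1}E\int_0^T\Vert X^\kappa(s,\cdot)\Vert_{L^2}^2\,\d s<\infty$ invoked for uniform integrability; this is true (it is the standard $L^2$ energy estimate, uniform in $\kappa$ because $x_0\in L^2$ and the dissipation term keeps its sign), but it does not follow from the Lipschitz remark alone, and you could avoid it by dominating $\Phi(X)^2\bigl(\sqrt X-\sqrt{X^{\kappa_n}}\bigr)^2\le\Phi^2(X)\,\vert X-X^{\kappa_n}\vert$ and using the $L^2(\d P\,\d r\,\d\xi)$-boundedness of $\Phi^2(X)(X-X^{\kappa})$ that the paper extracts from \eqref{e7.13}--\eqref{e7.14bis}.
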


\begin{proof}[Proof]

Let $\o \in \O$.
Since $\varphi$ has compact support, by Cauchy-Schwarz with respect 
to the measure  $\varphi(r,Y(r)) \d r$ on $[0,T]$, it is enough to prove 
that  
 \begin{equation}\label{e7.16}
 E^{Q^{\o, \kappa}}\left( \int^T_0 \(\Phi_{\kappa}
 \(X^{\kappa}(r,Y_r,\o)\)-
 \Phi\(X (r,Y_r,\o)\)\)^2 \varphi(r,Y_r) \d r \right)
 \end{equation}
converges to zero.
Since $\Phi$ is bounded it is enough to prove
the  convergence to zero for almost all $\o \in \O$.
 In order not to overcharge the notation, in this proof we will omit 
the argument of $\omega$ of $Y$.
By Fubini's theorem the left-hand side of \eqref{e7.16}
equals
 \begin{equation}\label{e7.16ter}
\int^T_0  \d r  E^{Q^{\o, \kappa}} \( (\Phi_{\kappa}   
   (X^{\kappa}(r,Y_r,\o))-
 \Phi(X (r,Y_r,\o)))^2 \varphi(r,Y_r)\).
 \end{equation}

Using also Lebesgue dominated convergence theorem,
given a sequence $(\kappa_n)$, when $n \rightarrow \infty$,
it is enough to find  a subsequence $(\kappa_{n_\ell})$  
such that for all $r \in [0,T]$ outside a possible Lebesgue null set
$$ E^{Q^{\o, \kappa_{n_\ell}}} \left \{ \( \Phi_{\kappa_{n_\ell}}   
   \(X^{\kappa_{n_\ell}}(r,Y_r,\o)\)-
 \Phi\(X (r,Y_r,\o)\)\)^2 \varphi(r,Y_r) \right\} 
\rightarrow_{\ell \rightarrow \infty} 0. $$
 We set $Z_r(\omega_1,\omega) =  \she_r \(\int\limits^\cdot_0 
\mu(\omega) (\d s,Y_s(\omega_1))\).$
We will  substitute from now on $(n_\ell)$ with $n$.

Taking into account Lemma \ref{L7.6bis} and Cauchy-Schwarz with
respect to the finite measure $Z_r(\o_1,\o) Q^{\o,\kappa_{n}}(\d \omega_1)$,
it is enough to prove that for $r$ a.e.
\begin{equation} \label{E7}
E^{Q^{\o, \kappa_{n}}} \left \{ \(\Phi_{\kappa_{n}}
(X^{\kappa_{n}}(r,Y_r,\o))-
\Phi(X (r,Y_r,\o))\)^2   \varphi(r,Y_r)
Z_r(\cdot, \o) \right \} 
\end{equation}
converges to zero when $n $ goes to infinity.

Since $X^{\kappa}$ constitutes the family of  $\mu$-marginal weighted
 laws of $Y$ under $Q^{\o,\kappa}$, previous expression gives
\begin{align} \label{e7.17bis}
& \int\limits_\R \vert \varphi \vert(r,y)\(\Phi_{\kappa_n}
\(X^{\kappa_n}(r,y,\o)\)-\Phi\(X(r,y,\o)\)\)^2 X^{\kappa_n}(r,y,\o) \d y \nonumber 
\\
\leq & I_{11}(\kappa_n,r)+I_{12}(\kappa_n,r)+I_{13}(\kappa_n,r)+I_{14}(\kappa_n,r),
\end{align}
where we have developed the square in the first line
of \eqref{e7.17bis} using the  definition of 
$\psi$ and $\Phi_\kappa$. Indeed we get 
\begin{align*}
I_{11}(\kappa,r) = &  \int\limits_\R \d y \vert \varphi \vert(r,y) \vert 
\abs{\psi\(X^{\kappa}(r,y,\o)\)-\psi\(X(r,y,\o)\)},\\
I_{12}(\kappa,r) = &  \int\limits_\R \d y 
\vert \varphi \vert(r,y) \vert 
\Phi^2 \(X^{\kappa}(r,y,\o)\) \left \vert  (X-X^{\kappa})(r,y,\o)
 \right \vert,\\
I_{13}(\kappa,r) = &  \int\limits_\R \d y
\vert  \varphi (r,y)\vert  \kappa \abs{X^{\kappa_n}-X}(r,y,\o),\\
I_{14}(\kappa,r) = & \int\limits^T_0 \d r \int\limits_\R \d y \kappa
 \abs{X(r,y,\o)}  \vert \varphi(r,y) \vert.
\end{align*}
We denote $I_{1j}(\kappa):= \int_0^T I_{1j}(\kappa,r) \d r$, $j =1,2,3,4$.
It is of course enough to prove that, up to a subsequence
 $I_{1j}(\kappa_n) \rightarrow 0 $, $j =1,2,3,4$, where $n \rightarrow \infty$.
By Cauchy-Schwarz, $I_{11}^2(\kappa)$ is bounded by
\begin{equation*}
\norm{\varphi}^2_{L^2([0,T]\times\R)} \int\limits^T_0 \d r \int\limits_\R \(\psi \(X^{\kappa}(r,y,\o)\)-\psi\(X(r,y,\o)\)\)^2 \d y.
\end{equation*}
This converges to zero according to Remark \ref{R7.5} 2),
after extracting a  subsequence $\kappa_n)$ (not depending on $\o$). 
The square of the expectation of  $I_{12}(\kappa)$ is bounded by
\begin{equation*}
\norm{\varphi}^2_{L^2([0,T]\times\R)} \int\limits_{[0,T]\times\R}
\d r\d y \Phi^4 \(X(r,y,\o)\)\abs{X^{\kappa}-X}^2(r,y,\o).
\end{equation*}
The expectation of previous expression is indeed uniformly bounded in
 $\kappa$ because of \eqref{e7.13} and \eqref{e7.14bis}. 
So the family of r.v. \\
$\Phi^2 \(X^{\kappa_n}(r,y,\o)\) \left \vert  (X-X^{\kappa})(r,y,\o)\right \vert $ 
is uniformly integrable with respect to the finite
measure $\d P(\o) \vert \varphi\vert(t,y) \d t \d y$.
Consequently $I_{12}(\kappa)$   goes to zero because of \eqref{e7.8}
 in Remark \ref{R7.5} 4).\\
$I^2_{13}(\kappa)$ is bounded by
$\kappa \norm{\varphi}^2_{L^2([0,T]\times\R)}
 \kappa \int\limits_{[0,T]\times\R}\d r\d y \abs{X^{\kappa}-X}^2(r,y,\o).
$
After extracting a subsequence $\kappa_n$, previous expression
 converges to zero because of Lemma \ref{L7.4} c). Finally $I_{14}(\kappa) \substack{\longrightarrow\\n\to \9}0$ by Cauchy-Schwarz and the fact that \\
 $\int\limits_{[0,T]\times\R}\d r\d y X^2 (r,y,\o)<\9 \; \pas$ 
This establishes the proof of Lemma \ref{L7.7}.
\end{proof}
\medskip
Let $(\kappa_n)$ be the sequence introduced by the statement
of Lemma \ref{TStableKolm}.
Previous Corollary \ref{C7.6} and Lemma \ref{L7.7}  have the 
following consequences .
Let  $Q(\d \o_1, \o)$ be the random kernel introduced in Lemma \ref{L7.6}
and the related probability   ${\bf Q}(\d \o_1,\d \o)= \d P(\o) Q(\d \o_1, \o)$.
\begin{cor} \label{C1}
Let $R:\Omega \rightarrow \R$ be a bounded measurable r.v.
Let $\varphi:[0,T] \times \R \rightarrow \R$ be a function 
with compact support.
The sequence 
 \begin{equation} \label{E1bis}
\int_\Omega R(\omega) \d P(\omega) \int_{\tilde \Omega_1} \d Q^{\o,\kappa_n}(\o_1) 
\int_0^T \varphi(r,Y_r) \Phi_{\kappa_n}^2(X^{\kappa_n}(r,Y_r,\omega)) \d r
\end{equation} 
converges, when $n \rightarrow \infty$, to
 \begin{equation} \label{E1ter}
\int_\Omega R(\omega) \d P(\omega) \int_{\tilde \Omega_1} \d Q(\o_1,\o) 
\int_0^T \varphi(r,Y_r) \Phi^2(X(r,Y_r,\omega)) \d r.
\end{equation} 
\end{cor}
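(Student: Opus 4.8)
The plan is to freeze the integrand at $\Phi^2(X)$ and to control the $\kappa_n$-dependent error separately, invoking Corollary~\ref{C7.6} for the frozen part and Lemma~\ref{L7.7} for the error. Concretely, I would write the expression in \eqref{E1bis} as the sum of the ``frozen'' term
\[
\int_\Omega R(\omega)\,\d P(\omega)\int_{\tilde\Omega_1}\d Q^{\omega,\kappa_n}(\omega_1)\int_0^T\varphi(r,Y_r)\,\Phi^2(X(r,Y_r,\omega))\,\d r
\]
and of the remainder term obtained by replacing $\Phi^2(X)$ with $\Phi_{\kappa_n}^2(X^{\kappa_n})-\Phi^2(X)$, and then show that the frozen term converges to \eqref{E1ter} while the remainder term vanishes as $n\to\infty$.

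For the frozen term I would set $F(\omega_1,\omega):=R(\omega)\int_0^T\varphi(r,\omega_1(r))\,\Phi^2(X(r,\omega_1(r),\omega))\,\d r$. Since $\varphi$ has compact support and $\Phi$ is bounded, $F$ is jointly measurable and bounded by $\Vert R\Vert_\infty\,T\,\Vert\varphi\Vert_\infty\,\Vert\Phi\Vert_\infty^2$; moreover, by Lemma~\ref{L7.6}~2) the map $\xi\mapsto\Phi(X(r,\xi,\omega))$ is continuous for a.e. $r$ and $P$-a.e. $\omega$, so that for such $\omega$ dominated convergence shows $\omega_1\mapsto F(\omega_1,\omega)$ to be continuous on $\tilde\Omega_1$ (uniform convergence of paths forces pointwise convergence at each $r$, while the integrand stays uniformly bounded). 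Thus $F$ meets the hypotheses of Corollary~\ref{C7.6}, which gives exactly that the frozen term converges to \eqref{E1ter}.

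It then remains to show that the remainder term tends to zero. Using $\Phi_{\kappa_n}^2=\Phi^2+\kappa_n$ and the fact that $\Phi_{\kappa_n}$ and $\Phi$ are both bounded by $M:=(\Vert\Phi\Vert_\infty^2+1)^{1/2}$ once $n$ is large enough that $\kappa_n\le 1$, one has the pointwise bound $|\Phi_{\kappa_n}^2(a)-\Phi^2(b)|=|\Phi_{\kappa_n}(a)-\Phi(b)|\,|\Phi_{\kappa_n}(a)+\Phi(b)|\le 2M\,|\Phi_{\kappa_n}(a)-\Phi(b)|$, so the modulus of the remainder term is at most
\[
2M\Vert R\Vert_\infty\int_\Omega\d P(\omega)\,E^{Q^{\omega,\kappa_n}}\!\bigl(\textstyle\int_0^T|\varphi|(r,Y_r)\,|\Phi_{\kappa_n}(X^{\kappa_n}(r,Y_r,\omega))-\Phi(X(r,Y_r,\omega))|\,\d r\bigr).
\]
By Lemma~\ref{L7.7}, applied with the continuous, compactly supported function $|\varphi|$ in place of $\varphi$, the inner $Q^{\omega,\kappa_n}$-expectation converges to zero in $L^1(\Omega,P)$; since the quantity just displayed is $2M\Vert R\Vert_\infty$ times the $P$-integral of that (nonnegative) expectation, it goes to zero. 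Together with the previous step this yields the convergence of \eqref{E1bis} to \eqref{E1ter}.

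The main obstacle, and the only step that is not pure bookkeeping, is the path-continuity of $\omega_1\mapsto F(\omega_1,\omega)$: this is where Lemma~\ref{L7.6}~2), i.e. continuity of $\xi\mapsto\Phi(X(r,\xi,\omega))$ for a.e. $r$, and the compact support of $\varphi$ are essential, since $X$ itself is a priori only measurable in $\xi$. Modulo that point, the proof is a direct assembly of Corollary~\ref{C7.6} and Lemma~\ref{L7.7}, the only idea being the ``frozen part plus small remainder'' splitting.
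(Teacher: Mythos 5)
Your proof is correct and follows essentially the same route as the paper: the same splitting into the error term $\Phi_{\kappa_n}^2(X^{\kappa_n})-\Phi^2(X)$ (killed by Lemma \ref{L7.7}) and the kernel-difference term on the frozen functional (killed by Corollary \ref{C7.6}). The only cosmetic difference is that the paper applies Fubini first and invokes Corollary \ref{C7.6} for each fixed $r$, whereas you apply it to the time-integrated functional after checking its $\omega_1$-continuity by dominated convergence; both variants rest on the same use of Lemma \ref{L7.6} 2).
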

\begin{proof} \
We split  the difference between 
\eqref{E1bis} and \eqref{E1ter} which gives
$I_1(n) + I_2(n)$ where 
$$
I_1(n) = \int_\Omega R(\omega) \d P(\omega) \int_{\tilde \Omega_1} 
 \d Q^{\o,\kappa_n}(\o_1)
\left( \int_0^T \varphi(r,Y_r) 
(\Phi_{\kappa_n}^2(X^{\kappa_n}(r,Y_r,\omega)) \d r
- \Phi^2(X(r,Y_r,\omega)) \d r\right),
$$
and
$$
I_2(n) = \int_\Omega R(\omega) \d P(\omega) \int_{\tilde \Omega_1} 
 ( Q^{\o,\kappa_n}(\d \o_1) -  Q(\d \o_1,\o))
\left( \int_0^T \varphi(r,Y_r) 
\Phi^2(X(r,Y_r,\omega)) \d r \right).
$$
We have
$$ \vert I_1(n) \vert \le  
2 \Vert \Phi \Vert_\infty \Vert R \Vert_\infty
 \int_\Omega \d P(\omega) 
\int_{\tilde \Omega_1} 
 \d Q^{\o,\kappa_n}(\o_1)
\left( \int_0^T \vert \varphi(r,Y_r)\vert  
\vert \Phi_{\kappa_n}(X^{\kappa_n}(r,Y_r,\omega)) \d r
- \Phi(X(r,Y_r,\omega))\vert \d r\right).
$$
$I_1(n)$ converges to zero by Lemma \ref{L7.7}.
Concerning $I_2(n)$, by Fubini's theorem,
we first observe that
$$ I_2(n) =  \int_0^T \d r  \int_\Omega \d P(\omega)   
    \left( \int_{\tilde \Omega_1} 
 ( Q^{\o,\kappa_n}(\d \o_1) -  Q(\d \o_1,\o))
 \varphi(r,Y_r) 
\Phi^2(X(r,Y_r,\omega)) R(\o) \right).$$
We apply now Corollary \ref{C7.6}, setting  
for fixed $r$, 
 $F(\o_1,\o) =  R(\o) \varphi(r,\o_1(r))\Phi^2(X(r,\omega_1(r),\omega))
$
 and the result follows.

\medskip


\end{proof}

 \begin{description}
 \item{5)}
 We go on 
 with the proof of Theorem \ref{T73}. 

 We want now to prove that $Y(\cdot,\omega)$ is a (weak)  solution of  
  \begin{equation} \label{e7.15}
  Y_t = Y_0 + \int\limits^t_0 \Phi \(X(s,Y_s,\cdot)\)\d \beta^\omega_s,
  \end{equation}
for some Brownian motion  $\beta^\omega $. This is related to item 1) of Definition \ref{DWeakStrong}
 with $\gamma(t,\xi,\omega) = \Phi(X(t,\xi,\omega))$.
According to
  Remark \ref{R2.10} c), for this
it is
 enough to show that for 
$\pas \o$
 $Y(\;\cdot\;,\o)$  
is a solution of the following (local) martingale problem. For every $f\in C^{1,2}([0,T]\times \R)$ with compact support, the process
$$ Z_t^f:=f(t,Y_t)-f(0,Y_0)-\frac{1}{2} \int\limits^t_0 
 \partial^2_{xx}f(r,Y_r) \Phi^2\(X(r,Y,\o)\)\d r
- \int_0^t \partial_r f(r,Y_r) dr, 
$$
is a (local) martingale  under $Q^\o$.
\end{description}

This will be a consequence of the lemma below.
\begin{lemma} \label{L78}
Let $F$ be a bounded $\shf_s$-measurable,
let $\sha:C([0,s]) \rightarrow \R$
bounded continuous functional. Let $G = \sha(Y_r, r\le s)$.
Then, for $0 \le s \le t \le T$ we have
\begin{equation}\label{E78bis}
E(F E^{Q^{\o}}(G Z_t^f)) =  E(F E^{Q^{\o}}(G Z_s^f)).
\end{equation}

\end{lemma}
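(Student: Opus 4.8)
The plan is to establish \eqref{E78bis} first at the regularized level $\kappa>0$, where $Y$ genuinely solves a stochastic differential equation under $Q^{\o,\kappa}$, and then to pass to the limit along the subsequence $(\kappa_n)$ of Lemma~\ref{TStableKolm}. I would fix $\o\in\O$ outside the exceptional $P$-null sets of Lemmas~\ref{L7.6} and~\ref{L7.6bis}, and for $\kappa>0$ introduce $Z_t^{f,\kappa}$ defined as $Z_t^f$ but with $\Phi^2(X(r,Y_r,\o))$ replaced by $\Phi_\kappa^2(X^\kappa(r,Y_r,\o))$. Since by item~\ref{e7.3} of step 1) the process $Y(\cdot,\o)$ is, under $Q^{\o,\kappa}$, a weak solution of $Y_t=Y_0+\int_0^t\Phi_\kappa(X^\kappa(s,Y_s,\o))\,\d B^\kappa_s$ with $B^\kappa(\cdot,\o)$ a $Q^{\o,\kappa}$-Brownian motion, one has $[Y]_t=\int_0^t\Phi_\kappa^2(X^\kappa(r,Y_r,\o))\,\d r$, and It\^o's formula applied to $f(t,Y_t)$ yields $Z_t^{f,\kappa}=\int_0^t\partial_x f(r,Y_r)\Phi_\kappa(X^\kappa(r,Y_r,\o))\,\d B^\kappa_r$. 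As $\partial_x f$ has compact support and $\Phi$ is bounded, this is a genuine square-integrable $Q^{\o,\kappa}$-martingale and $Z^{f,\kappa}$ is bounded uniformly for $\kappa\le 1$. Testing the martingale property against the bounded $\shy_s$-measurable $G$, then multiplying by the bounded $\shf_s$-measurable $F$ and integrating in $P$, I would obtain the regularized identity $E(F\,E^{Q^{\o,\kappa}}(GZ_t^{f,\kappa}))=E(F\,E^{Q^{\o,\kappa}}(GZ_s^{f,\kappa}))$ for every $\kappa>0$.

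The second step is to let $\kappa=\kappa_n\to 0$ in this identity. I would split $Z_t^{f,\kappa}$ into the path part $f(t,Y_t)-f(0,Y_0)-\int_0^t\partial_r f(r,Y_r)\,\d r$ and the diffusion part $-\frac12\int_0^t\partial^2_{xx}f(r,Y_r)\Phi_\kappa^2(X^\kappa(r,Y_r,\o))\,\d r$. Multiplied by $G$, the path part is a bounded continuous functional of $Y$ on $\tilde\Omega_1$, so Corollary~\ref{C7.6}, applied to each of its three summands with $F(\o_1,\o)$ equal to $F(\o)$ times the corresponding bounded continuous functional of $\o_1$, lets its contribution converge to the associated $Q^\o$-expression (and similarly with $t$ replaced by $s$). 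For the diffusion part I would invoke Corollary~\ref{C1} with $R=F$ and $\varphi=\partial^2_{xx}f$, which is continuous with compact support; the only change is the extra bounded factor $G(Y)$ inside the time integral, which is harmless, since in the proof of Corollary~\ref{C1} the $I_1(n)$ contribution still tends to zero by Lemma~\ref{L7.7} after multiplying its compactly supported integrand by $\norm{G}_\infty$, while the $I_2(n)$ contribution is handled by Corollary~\ref{C7.6} applied, for fixed $r$, to $F(\o_1,\o)=F(\o)G(\o_1)\partial^2_{xx}f(r,\o_1(r))\Phi^2(X(r,\o_1(r),\o))$, which is bounded and, by Lemma~\ref{L7.6} 2) together with the continuity of $\sha$ and $\partial^2_{xx}f$, continuous in $\o_1$ for $P$-almost all $\o$. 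Combining the two parts and letting $n\to\infty$ in the regularized identity then produces exactly \eqref{E78bis}.

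I expect the main obstacle to be precisely this limit in the diffusion term, where one must cope simultaneously with $\Phi_{\kappa_n}\to\Phi$, with the weak convergences $X^{\kappa_n}\to X$ of Lemma~\ref{L7.4}, and with the merely stable convergence $Q^{\o,\kappa_n}\to Q^\o$. The uniform bounds of Lemma~\ref{L7.6bis} on the exponential weights, together with the $L^2$-type estimates of Lemma~\ref{L7.4}, are what guarantee that the relevant families are uniformly integrable; once Corollary~\ref{C1} has been upgraded to carry the extra bounded continuous factor $G$ (which, as indicated, requires no new idea), the remaining verifications are routine. A minor point to check along the way is the $\shf$-measurability of $\o\mapsto E^{Q^{\o,\kappa}}(GZ_t^{f,\kappa})$, which follows since $Q^{\cdot,\kappa}$ is a random kernel and $Z^{f,\kappa}$ depends measurably on $\o$ through $X^\kappa$.
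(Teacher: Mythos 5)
Your proposal follows essentially the same route as the paper: the paper also defines the regularized functional $Z^{\kappa,f}$, notes that the middle term $E\bigl(F\,E^{Q^{\o,\kappa}}(G(Z_t^{\kappa,f}-Z_s^{\kappa,f}))\bigr)$ vanishes exactly because $Z^{\kappa,f}$ is a $Q^{\o,\kappa}$-martingale, and sends the two boundary terms to zero along $(\kappa_n)$ via Lemma \ref{TStableKolm}, Corollary \ref{C1} and Lemma \ref{L7.7}. Your additional care in splitting off the path part (handled by Corollary \ref{C7.6}) and in checking that Corollary \ref{C1} tolerates the extra bounded continuous factor $G$ simply makes explicit what the paper's one-line citation leaves implicit; the argument is correct.
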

\begin{proof} \
We set 
$$ Z_t^{\kappa,f} = f(t,Y_t)-f(0,Y_0)-\frac{1}{2} \int\limits^t_0 
 \partial^2_{xx}f(r,Y_r) \Phi_\kappa^2\(X^\kappa(r,Y,\o)\)\d r
- \int_0^t \partial_r f(r,Y_r) \d r.
$$
Let $(\kappa_n)$ be the sequence introduced by Lemma \ref{TStableKolm}.
The difference of the right and left-hand side of \eqref{E78bis}
is the sum $(I_1 + I_2 + I_3)(\kappa_n)$
where 
\begin{eqnarray*}
I_1(\kappa) &=&  E\left(F (E^{Q^{\o}}(G Z_t^f) - 
E^{Q^{\o,\kappa}}(G Z_t^{\kappa, f}))\right) \\
I_2(\kappa) &=&  E\left(F E^{Q^{\o, \kappa}}
(G (Z_t^{\kappa,f} - Z_s^{\kappa,f}))\right)  \\
I_3(\kappa) &=&  E\left(F (E^{Q^{\o, \kappa}}(G Z_s^{\kappa, f}) - 
E^{Q^{\o}}(G Z_s^f))\right).
\end{eqnarray*}
$I_1(\kappa_n) + I_3(\kappa_n)$ converges to zero 
by Lemma \ref{TStableKolm}, Corollary \ref{C1} and Lemma \ref{L7.7}.
$I_2(\kappa_n) = 0$ since $Z^{\kappa,f}$ is a $Q^{\kappa, \o}$-martingale.
\end{proof}


\begin{description}
\item{6)} After previous intermediary step 5) we need to show that $Y$ defined in \eqref{ENummer}
is a weak-strong solution of DSDE$(\gamma,x_0)$ with
$\gamma(s,\xi,\omega) = \Phi(X(s,\xi,\omega))$ and $X$ is a solution of \eqref{PME}.
We recall that the kernel $Q(\cdot, \o)$ has been introduced through
 Lemma \ref{TStableKolm}
on $(\tilde \Omega_1 \times \Omega, \shh \otimes \shf)$. 
 So, according to step 5), under  $Q^\omega:=Q(\cdot,\omega)$,
 $Y$ is a martingale with
$[Y]_t =\int\limits^t_0 \Phi^2\(X(s,Y_s,\o)\)\d s$. To conclude the proof
of item 1) in Definition \ref{DWeakStrong}, it remains to construct the 
suitable required process $B$. 
For this, we need
to enlarge the probability space $\tilde \Omega_1$ as follows.
We set $\Omega_1 = \tilde \Omega_1 \times C([0,T];\R)$; the second component allows to
define a Brownian motion.
By an abuse of notation, we  set again 
$ Y_t(\o_1,\o)= \omega_1^0(t),$
 this time with 
$\omega_1 = (\omega_1^0,\omega_1^1).$
In spite of adding the component $\omega^1_1$, in step 5) 
  we have already shown $Q^{\omega} := Q(\;\cdot\;,\o)$, is by construction
  the law of
 $Y(\cdot,\o)$.
We need to construct a process $B$ on $\Omega \times \Omega_1$,
such that for almost all $\omega$, $B(\cdot,\omega)$ is a 
 $Q^\omega$-Brownian motion and
\eqref{DWS} holds for $\gamma(t,\cdot, \omega) = \Phi(X(t,\cdot,\omega))$.

On $\Omega_1 $ we set $\beta_t(\omega_1)  =   \omega_1^1(t)$. We equip 
$C([0,T];\R)$ in $\Omega_1$ with the Wiener measure $\shw$ so that 
$\beta$ is a standard Brownian motion on $\Omega_1$. 
$\beta$ can also be considered to be a Brownian motion on 
$\Omega_0 = \Omega_1 \times \Omega$ which
is $Q^\o$-independent of $Y$ for $P$-almost all  $\omega \in \Omega$.
Of course $\beta$ is also independent of $Y$ on
the probability space $(\Omega_1\times \Omega, \shb(\Omega_1) \times \shf, 
\d {\bf Q}(\omega_1,\omega):= Q^\omega(\d \omega_1) \d P(\omega))$.
$\beta$ is also independent of $(\shf_t)$.

We set now
$$ B_t(\cdot,\omega) = \int_0^t \d Y_s(\cdot,\omega) 
1_{\{\gamma(s,\xi,\omega) \neq 0\}} \frac{1}{\gamma(s,\xi,\omega)}
+ \int_0^t   1_{\{\gamma(s,\xi,\omega) = 0\}} \d \beta_s.$$ 
 Now for $Q^\omega$-a.s. the quadratic variation of 
the $Q^\omega$-martingale $B(\cdot,\omega)$
is $t$, so that, by L\'evy characterization theorem,
 $B(\cdot, \omega)$ is a Brownian motion under $Q^\omega$.

It remains to show  items 2) and 3) of the definition  of  weak-strong solution.
Let $(\shy_t)$ be the canonical filtration of the process 
$Y(\cdot, \omega)$. Item 3) follows because of item 1)
and because $\gamma(t,\cdot, \omega) = \Phi(X(t,\cdot,\omega))$
is progressively measurable.
Concerning item 2) we see that 
under ${\bf Q} $ defined by $P$ and the kernel $Q(\cdot,\omega)$,
$W^1, \ldots,W^N$ are ${\bf Q}$-martingales with $(\shg_t)$
as defined in Definition \ref{DWeakStrong}.
Indeed let $F$ be a bounded $\shf_s$-measurable random variable and $G$ be a
bounded $\shy_s$-measurable r.v. 
Let $1 \le i \le N$. 
By item 3) $E^{Q^{\omega}} (G)$ is $\shf_s$-measurable, so 
$$ E^{\bf Q}((W^i_t - W^i_s) F G) =  E((W^i_t - W^i_s) F 
E^{Q^{\omega}} (G) ) =  0, $$
since $W^i$ is an  $\shf_s$-martingale.

\item{7)}
 The final step consists in proving that $X$ is the family of $\mu$-marginal
weighted laws of $Y$.
We need  to show that for almost all $\o$, for every $t\in[0,T],\; \varphi\in \shs(\R)$, that
\begin{equation}\label{e7.21}
\int\limits_\R \d \xi\varphi(\xi) X(t,\xi,\o) = E^{Q^\o} \(\varphi(Y_t)\she_t
 \(\int\limits^\cdot_0 \mu (\d s,Y_s)\)\).
\end{equation}
Since both sides of previous equality are $\shf_t$-measurable,
given a bounded $\shf_t$-measurable random variable $R$ it will be enough
to show that
\begin{equation}\label{e7.21bis}
 \int_\O \d P(\o)R(\o) \int\limits_\R \d \xi\varphi(\xi) X(t,\xi,\o) = 
\int_\O \d P(\o)  R(\o)
E^{Q^\o} \(\varphi(Y_t)\she_t
 \(\int\limits^\cdot_0 \mu (\d s,Y_s)\)\).
\end{equation}
 Let $\o\in\O$ outside some $P$-null set.\\
By step 1) of the proof of this Theorem \ref{T73},
we know that $X^\kappa$ fulfills, for almost all $\o$,
$$
\int\limits_\R \d \xi X^\kappa (t,\xi)\varphi(\xi)
=  E^{Q^\kappa (\;\cdot\;,\o)} \(\varphi(Y_t)\she_t \(\int\limits^\cdot_0
 \mu (\d s,Y_s)\)\),
$$
for all $\varphi\in\shs(\R)$.
Consequently if $({\kappa_n})$ is the sequence obtained via Lemma 
\ref{L7.6}, we have
\begin{equation}\label{e7.20}
\int_\O \d P(\o)R(\o)  \int\limits_\R \d \xi X^{\kappa_n} (t,\xi)\varphi(\xi)
=  \int_\O \d P(\o)R(\o)  E^{Q^{\o, \kappa_n}}
 \(\varphi(Y_t)\she_t \(\int\limits^\cdot_0 \mu (\d s,Y)\)\),
\end{equation}
for every $\varphi\in\shs(\R)$.


Since $t \mapsto X(t,\;\cdot\;)$ is continuous from $[0,T]$ to 
$\shs'(\R)$ and the right-hand side of \eqref{e7.21bis} 
 is continuous on $[0,T]$ for fixed $\varphi\in\shs(\R)$, it is enough to show 
\eqref{e7.21bis} for almost all $t\in[0,T]$.\\
Now for almost all $t$, the left-hand side of \eqref{e7.21bis} is approached by the left-hand side of \eqref{e7.20}.
Let us fix $t\in [0,T]$.
 It remains to show that the right-hand side of \eqref{e7.21bis} is the limit of the right-hand side of \eqref{e7.20}.
 We fix $\o \in \Omega $ outside a null set.
We set  $ \she_t := \she_t
 \(\int\limits^\cdot_0 \mu (\d s,Y_s)\), \ t \in [0,T].$
By Theorem 2 of \cite{RVSem} and uniform integrability
arguments, similarly as after \eqref{EMargU},
we have 
$$
\she_t = \exp (\psi_\omega(Y)), $$
 where $\psi_\omega: \tilde \Omega_1 \rightarrow \R$
is a continuous modification of 
$$\o \mapsto \(\eta \mapsto \int\limits^t_0 e^i (\eta_s) \d W^i_s -\frac{1}{2} \int\limits^t_0 
e^i (\eta_s)^2 \d s)\).$$ Indeed, previous random field, indexed by
 $\eta \in   \tilde \O_1,$ admits a continuous modification; to prove this
we make use of Kolmogorov-Centsov theorem and  Doob's inequality, which says that 
 for any $0 \le i \le N$, there is a constant ${\rm const}= {\rm const}((e^i)'$)
with
$$ E\left( \vert \int \limits^t_0 (e^i (\eta^1_s) - e^i (\eta^2_s)) \d W^i_s \vert^4\right)
 \le {\rm const} \sup_{s \in [0,T]} \vert \eta^1 - \eta^2\vert^2(s), 
\ \eta^1, \eta^2  \in \tilde \O_1.  $$


 At this point we fix $M > 0$.
We decompose the difference of the right-hand sides of \eqref{e7.20} and \eqref{e7.21bis} as
\begin{equation} \label{J123} 
J_1(n, M) + J_2(n,M) + J_3(n,M),
\end{equation}
 where 
 $$J_1(n,M) =  \int_\O \d P(\o)R(\o)  E^{Q^{\o, \kappa_n}} 
  \left(\varphi(Y_t)\she_t  - \varphi(Y_t) (\she_t \wedge M)
   \right),$$
 $$J_2(n,M) =  \int_\O \d P(\o)R(\o)  (E^{Q^{\o, \kappa_n}} - E^{Q^\o}) 
  \left(\varphi(Y_t)  (\she_t \wedge M) \right),$$
 $$J_3(n,M) = \int_\O \d P(\o)R(\o) E^{Q^\o} \left( \varphi(Y_t) 
 (\she_t \wedge M) -  \varphi(Y_t)\she_t \right).$$
Setting ${\bf Q}^{\kappa_n}(\d \o,\d \o_1) = \d P(\o) Q^{\o,\kappa_n}(\d \o_1)$,
by Cauchy-Schwarz and Chebyshev inequalities, for every $p >1$, we have
$$  \vert J_1(n,M) \vert = \left \vert \int_{\O_1 \times \O} \d Q^{\kappa_n}   
  \varphi(Y_t)\she_t 1_{\{\she_t > M\}} \right \vert \le 
 \Vert \varphi \Vert_\infty  \frac{E^{Q^{\kappa_n}}( \she_t^p)}{M^{p-1}}.$$
By Lemma \ref{L7.6bis}, we get
$\sup_n \vert J_1(n,M)\vert \rightarrow 0$ if $M \rightarrow \infty$. 
By a similar reasoning, replacing 
 $Q^{\kappa_n}(\d \o,\d \o_1)$
with  $Q(\d \o,\d \o_1) = \d P(\o) Q^{\o}(\d \o_1)$,
we can prove that 
$\sup_n \vert J_3(n,M) \vert \rightarrow 0$. 
Let $\varepsilon > 0$. Let $M$ such that
$\sup_n \vert J_1(n,M) + J_3(n,M) \vert \le \varepsilon.$
On the other hand we have 
$$J_2(n,M) =  \int_\O \d P(\o)R(\o)  \left(E^{Q^{\o, \kappa_n}} - E^{Q^\o}\right) 
  (\varphi(Y_t)(\psi_\o(Y) \wedge M)).$$
Since for almost all $\o$, $F(\eta,\o) :=  R(\o) \varphi(\eta(t)) \psi_\o(\eta)$
is bounded and continuous, Corollary \ref{C7.6} implies that $J_2(n,M)$
 goes to zero when $n \rightarrow \infty$.

Taking the limsup in \eqref{J123} we get
$$ \limsup_{n \rightarrow \infty} \vert  J_1(n,M) + J_2(n,M) + J_3(n,M) \vert \le \varepsilon.$$
Since $\varepsilon$ is arbitrarily small, we get
$ \lim_{n \rightarrow \infty}  \vert  J_1(n,M) + J_2(n,M)+ J_3(n,M) \vert = 0$
and the result follows.



\end{description}
\end{prooff}

\begin{appendix}
\section{Technicalities}

\setcounter{equation}{0}

\begin{prop} \label{A4}
Let $Y_0$ be distributed according to $x_0$. Let $a:[0,T]\times\R\to\R$ be a Borel function such there are $0<c<C$ with
$ c \leq a(s,\xi)\leq C, \quad \forall \; (s,\xi)\in[0,T]\times \R.$
We fix $0 \le r  \le t \le T$.
We set $a_n(t,x)=\int\limits_\R \rho_n (x-y) a(t,y)\d y$ where $(\rho_n)$
 is the usual sequence of mollifiers converging to the Dirac delta.
The unique solutions $S^n$ to
$S^n_t=Y_0 + \int^t_r a_n (s, S^n_s)\d B_s,$
$B$ being a classical Wiener process, converges in law to the (weak unique solution) of
$ S_t=Y_0+\int^t_r a (s, S_s)\d B_s. $
\end{prop}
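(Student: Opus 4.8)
The plan is to run the classical tightness / martingale-problem scheme for SDEs; the non-degeneracy of $a$ is exactly what lets it go through even though $a$ is only measurable. \textit{Tightness.} First I would note that mollification preserves two-sided bounds, so $c\le a_n\le C$ for every $n$. Hence, by the Burkholder-Davis-Gundy inequality, $E\big(|S^n_t-S^n_s|^4\big)\le 3C^4(t-s)^2$ for $r\le s\le t\le T$, uniformly in $n$, while the law of $S^n_r=Y_0$ is $x_0$ for every $n$. By the Kolmogorov-Centsov criterion the family $\big({\rm Law}(S^n)\big)_n$ is tight on $C([r,T];\R)$; let $S$ be a weak limit of some subsequence $(S^{n_k})$.

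\textit{A uniform Krylov estimate.} Since each $S^n$ solves a one-dimensional SDE with bounded, non-degenerate coefficients, the occupation-time estimate behind Exercise 7.3.3 of \cite{sv} (already used in the proof of Proposition \ref{P4.2}) provides a constant $N=N(c,C,T)$, \emph{independent of} $n$, with
\[
E\Big(\int_r^T|g(s,S^n_s)|\,\d s\Big)\le N\,\|g\|_{L^2([r,T]\times\R)},\qquad g\in L^2([r,T]\times\R).
\]
Passing to the weak limit along $(S^{n_k})$, first for bounded continuous $g\ge 0$ and then extending by a monotone-class argument, the same bound holds with $S^n$ replaced by $S$.

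\textit{Identifying the limit.} Fix $f\in C^2_c(\R)$. For each $n$ the process $M^n_t:=f(S^n_t)-f(Y_0)-\tfrac12\int_r^t a_n^2(s,S^n_s)f''(S^n_s)\,\d s$ is a martingale, and I would show that along $(n_k)$ it converges in $L^1$ to $f(S_t)-f(Y_0)-\tfrac12\int_r^t a^2(s,S_s)f''(S_s)\,\d s$, which then is a martingale. The term $f(S^{n_k}_t)\to f(S_t)$ is immediate; for the integral term I would split
\[
a_{n}^2(s,S^{n}_s)f''(S^{n}_s)-a^2(s,S_s)f''(S_s)=(a_{n}^2-a^2)(s,S^{n}_s)f''(S^{n}_s)+\big(a^2(s,S^{n}_s)f''(S^{n}_s)-a^2(s,S_s)f''(S_s)\big).
\]
The first piece tends to $0$ because $a_n^2\to a^2$ in $L^2_{\rm loc}$ (mollification of a bounded function), so $(a_n^2-a^2)f''$, supported in a fixed compact, tends to $0$ in $L^2([r,T]\times\R)$, and its time-integral is controlled uniformly in $k$ by the Krylov estimate above. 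For the second piece I would approximate $a^2$ in $L^2$ on the relevant compact by a bounded continuous $h$; then $\int_r^t h(s,S^{n_k}_s)f''(S^{n_k}_s)\,\d s\to\int_r^t h(s,S_s)f''(S_s)\,\d s$ since $\omega\mapsto\int_r^t h(s,\omega_s)f''(\omega_s)\,\d s$ is continuous on $C([r,T])$, while the errors $\int_r^T|a^2-h|(s,\cdot)\,|f''|(\cdot)\,\d s$ are bounded — uniformly in $k$, and also for $S$ — by $N\|(a^2-h)f''\|_{L^2}$; letting $h\to a^2$ closes this piece. Thus $S$ solves the martingale problem for $L_s=\tfrac12 a^2(s,\cdot)\partial_{xx}$ with $S_r\sim x_0$; localizing $f$ in this identity and using $a\ge c>0$, $\beta_t:=\int_r^t a(s,S_s)^{-1}\,\d S_s$ is, by L\'evy's characterization, a Brownian motion with $S_t=Y_0+\int_r^t a(s,S_s)\,\d\beta_s$, exactly as in the proof of Proposition \ref{P4.1}, so $S$ is a weak solution of the limiting SDE. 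Finally, by Exercise 7.3.3 of \cite{sv} this martingale problem is well posed, hence the law of $S$ depends only on $x_0,a,r$; since every weakly convergent subsequence of $\big({\rm Law}(S^n)\big)$ has this same limit, the whole sequence converges in law to $S$.

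\textit{Main obstacle.} The only genuine difficulty is the second piece above: passing to the limit in $\int_r^t a^2(s,S^n_s)f''(S^n_s)\,\d s$ when $a$ is merely measurable, where weak convergence of the paths alone is useless. The uniform Krylov (occupation-density) estimate, transported to the limit process $S$, is precisely what makes the continuous-approximation argument work; once it is in place the rest is bookkeeping.
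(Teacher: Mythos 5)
Your proposal is correct and follows exactly the route the paper indicates for this proposition: tightness via BDG and the Kolmogorov--Centsov criterion, identification of the limit through the martingale problem using the occupation-time (Krylov-type) estimate behind Exercise 7.3.3 of Stroock--Varadhan, and well-posedness of the limiting martingale problem to upgrade subsequential to full convergence. The paper only sketches this ("standard arguments, see Stroock--Varadhan, Problem 7.3.3, tightness and Kolmogorov--Centsov type arguments", deferring details elsewhere), and your write-up supplies precisely the missing details, with the uniform Krylov estimate correctly identified as the key point for handling the merely measurable coefficient.
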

\begin{proof}
The proof follows by standard arguments, see Stroock-Varadhan (\cite{sv}, Problem 7.3.3), tightness
and Kolmogorov-Centsov type arguments. For a detailed proof, the reader may consult \cite{Bar-Roc-Rus-2200}.
\end{proof}

{\bf Proof} (of Corollary \ref{C7.6}).

By \eqref{TE3}, the family  $(Q^{\kappa_\ell}, \ell \in \N, \o \in \O)$
is tight. So, for every positive integer $n$ there exists a compact subset 
$K_n$ of $\tilde \O_1$ such that
\begin{equation} \label{EC1}
Q^{\kappa_\ell}(K_n^c,\o) < \frac{1}{n},  \forall  \ell \in \N, \omega \in \O.
\end{equation}
Since each $C(K_n):=C(K_n;\R)$ is separable with respect to the
sup-norm $\Vert \cdot \Vert_\infty$ then $C(K_n),\Vert \cdot \Vert_\infty$ is a 
separable Banach space. So we apply  Appendix 1, Lemma A.1.4 in \cite{prevot},
to the map $\O \ni \o \mapsto F(\cdot \vert_{K_n},\o) \in C(K_n)$,
where $ F(\cdot \vert_{K_n}, \o)$ denotes the map 
$K_n \ni \eta \mapsto F(\eta, \o)$. Therefore we can find
a sequence $\tilde F_{n,k}: \O \rightarrow C(K_n)$, $\o \mapsto 
\tilde F_{n,k}(\cdot,\o) \in K_n$ such that for
$ \Vert F \Vert_\infty:= \sup_{\eta \in \tilde \O_1, \o \in \O} 
\vert F(\eta,\o)\vert,$
we have 
$$ \Vert \tilde F_{n,k} \Vert_\infty \le 1 + \Vert F \Vert_\infty, 
\quad \tilde F_{n,k}(\O) \subset \{\tilde g^{(1)}_{n,k}, \ldots, 
\tilde g^{(N_{n,k})}_{n,k}\} \subset C(K_n),$$
 where $\tilde g^{(i)}_{n,k} \neq \tilde g^{(j)}_{n,k}$ if $ i \neq j$,
and for all $\o \in \O$
\begin{equation} \label{EC2}
\sup_{\eta \in K_n} \vert F(\eta, \o) - F_{n,k}(\eta, \o)\vert \rightarrow 0,
\end{equation}
as $k \rightarrow \infty$. Clearly, for all $\o \in \O, \quad
 \tilde F_{n,k}(\cdot, \o) = \sum_{j=1}^{N_{n,k}} \tilde g^{(j)}_{n,k} 
1_{\{\tilde g^{(j)}_{n,k}\}} \circ \tilde F_{n,k}(\cdot, \o). $
By Tietze's extension theorem there exist  extensions
 $g^{(1)}_{n,k}, \ldots, 
 g^{(N_{n,k})}_{n,k} \in C(\tilde \O_1)$ 
of $\tilde g^{(1)}_{n,k}, \ldots, 
 \tilde g^{(N_{n,k})}_{n,k}$
such that for all  
$1 \le j \le N_{n,k}$, 
$ \sup_{\eta \in \tilde \O_1}  \vert g^{(j)}_{n,k}(\eta) \vert 
\le \sup_{\eta \in \tilde K_n}  \vert \tilde g^{(j)}_{n,k}(\eta)\vert.$
Now we define $F_{n,k}: \Omega  \rightarrow C(\tilde \O_1),$
$ \o \mapsto F_{n,k}(\cdot,\o)$ by 
 $$  F_{n,k}(\cdot, \o) = \sum_{j=1}^{N_{n,k}} g^{(j)}_{n,k} 
1_{\{\tilde g^{(j)}_{n,k}\}} \circ \tilde F_{n,k}(\cdot, \o). $$
Clearly, still
\begin{equation} \label{EC3}
\Vert F_{n,k} \Vert_\infty \le 1 + \Vert F \Vert_\infty.
\end{equation}
Note that for all $\eta \in \tilde \O_1$
$$ \tilde F_{n,k}(\eta, \o) = \sum_{j=1}^{N_{n,k}} g^{(j)}_{n,k}(\eta) 
1(\eta)_{\{\tilde g^{(j)}_{n,k}\}} \circ \tilde F_{n,k}(\eta, \o), $$
hence of the form that Lemma \ref{TStableKolm} applies.
Therefore using the standard notation
$ \mu(f):= \int f \d \mu,$
for a measure $\mu$ and a function $f$, 
we can argue as follows. 
Fix $n \in \N$. Then for all $\ell, k \in \N$
\begin{eqnarray*}
&& \left \vert \int Q^{\kappa_\ell}(F(\cdot,\o), \o) P(\d \o)  -
 \int Q(F(\cdot,\o), \o)P(\d \o) \right \vert \\
&\le &
\left \vert \int Q^{\kappa_\ell}(F(\cdot,\o) 1_{K_n}, \o)P(\d \o)  - 
\int Q(F(\cdot,\o)1_{K_n}, \o)P(\d \o) \right \vert 
 + \frac{2}{n} \Vert F \Vert_\infty \\ 
 & \le  & 
 \int  \underbrace{Q^{\kappa_\ell}(\left \vert F(\cdot,\o) -  
F_{n,k}(\cdot,\o) \right \vert 1_{K_n},\o)}_{\le \sup_{\eta \in K_n} 
\vert F(\eta,\o) - F_{n,k}(\eta,\o)\vert}     P(\d \o) \\
&+& \left \vert \int Q^{\kappa_\ell}(F_{n,k}(\cdot,\o), \o)P(\d \o) -
       \int Q(F_{n,k}(\cdot,\o), \o)P(\d \o) \right \vert \\
&+& \frac{2}{n} (1 + \Vert F \Vert_\infty) + 
 \int  Q(\vert F(\cdot,\o) - F_{n,k}(\cdot,\o)\vert, \o) 1_{K_n}
   P(\d \o) +  
\frac{2}{n} \Vert F \Vert_\infty.
\end{eqnarray*}
The first inequality is a consequence of \eqref{EC1}, the second one of
 \eqref{EC1} and  \eqref{EC3}.
Now, letting first $\ell \rightarrow \infty$ (using Lemma \ref{TStableKolm}),
then $k \rightarrow \infty$ (using \eqref{EC2}) and finally
$n \rightarrow \infty$, the assertion follows.
\qed

\section{Uniqueness for the porous media equation 
with  noise}

\label{SUniqueness}

\setcounter{equation}{0}

We state here a general uniqueness lemma which only holds 
under  even weaker hypotheses  than Assumption \ref{E1.0} i.e.
$\psi: \R \rightarrow \R$ is Lipschitz and that the functions
belong to $W^{1,\infty}$.

\begin{theo} \label{TB1}  Let $x_0 \in \shs'(\R^d)$ and
suppose $\psi:\R \rightarrow \R$ to be Lipschitz.
Then equation \eqref{PME} admits at most one
 solution among the random fields $X:]0,T]\times\R\times\O\to\R$ such that 
\begin{equation}\label{eFB1}
\tag{B.1}
\int_{[0,T]\times\R} X^2(s,\xi)\d s\d \xi <\9 \quad \text{a.s.}
\end{equation}
\end{theo}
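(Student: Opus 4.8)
The argument follows the $H^{-1}$-energy method used for Theorem \ref{T51} --- a randomized form of Theorem~3.8 of \cite{BRR1} --- so I only outline it, the complete details being in \cite{BRR4}, Theorem~4.2. Let $X^1,X^2$ be two solutions of \eqref{PME} satisfying \eqref{eFB1} and set $Z=X^1-X^2$. First I would subtract the two weak formulations \eqref{EDist}: this shows that, $P$-a.s., $Z$ solves in $\shs'(\R)$ the linear SPDE
$$\partial_t Z=\halb\,\partial^2_{\xi\xi}\bigl(\psi(X^1)-\psi(X^2)\bigr)+Z\,\partial_t\mu,\qquad Z(0,\cdot)=x_0-x_0=0 .$$
By \eqref{eFB1} one has $Z\in L^2([0,T]\times\R)$ a.s., and since $\psi$ is Lipschitz with $\psi(0)=0$ also $\psi(X^1)-\psi(X^2)\in L^2([0,T]\times\R)$ a.s., so the drift is a well-defined $H^{-2}$-valued integrable process. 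The natural space is $H^{-1}=H^{-1}(\R)$: by Lemma \ref{LMultipl} each $e^i$ (being in $W^{1,\infty}$) is an $H^{-1}$-multiplier, $\norm{e^i g}_{H^{-1}}\le\shc(e^i)\norm{g}_{H^{-1}}$, which is precisely what keeps the stochastic term and its It\^o correction under control.

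Next I would apply It\^o's formula in $H^{-1}$ to $g(t):=\norm{Z(t,\cdot)}^2_{H^{-1}}$, within the Gelfand triple $L^2\subset H^{-1}\subset H^{-2}$. Using $\partial^2_{\xi\xi}=I-(I-\Delta)$, i.e. $\langle Z,\partial^2_{\xi\xi}w\rangle_{H^{-1}}=\langle Z,w\rangle_{H^{-1}}-\langle Z,w\rangle_{L^2}$, this should produce, with $w_s:=\psi(X^1_s)-\psi(X^2_s)$,
$$g(t)+\int_0^t\!\langle Z_s,w_s\rangle_{L^2}\,\d s=\int_0^t\!\langle Z_s,w_s\rangle_{H^{-1}}\,\d s+2\int_0^t\!\langle Z_s,e^0Z_s\rangle_{H^{-1}}\,\d s+\sum_{i=1}^N\int_0^t\!\norm{e^iZ_s}^2_{H^{-1}}\,\d s+M_t ,$$
where $M_t=2\sum_{i=1}^N\int_0^t\langle Z_s,e^iZ_s\rangle_{H^{-1}}\,\d W^i_s$ is a local martingale.

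Then I would close the estimate. Writing $w_s=a_sZ_s$ with $0\le a_s\le L:=\mathrm{Lip}(\psi)$ (here the Lipschitz, non-decreasing structure of $\psi$ is used), the pointwise bound $a_s^2\le L\,a_s$ yields $\norm{w_s}^2_{H^{-1}}\le\norm{w_s}^2_{L^2}\le L\,\langle Z_s,w_s\rangle_{L^2}$; hence, by Cauchy--Schwarz in $H^{-1}$ and Young's inequality, $\langle Z_s,w_s\rangle_{H^{-1}}\le\halb\langle Z_s,w_s\rangle_{L^2}+\frac{L}{2}\,g(s)$, so the a priori sign-indefinite term on the right is absorbed by half of the dissipative term $\int_0^t\langle Z_s,w_s\rangle_{L^2}\,\d s\ge0$ on the left. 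The two $e^i$-contributions are bounded by $\bigl(2\shc(e^0)+\sum_{i=1}^N\shc(e^i)^2\bigr)\int_0^t g(s)\,\d s$ via the multiplier property. Dropping the remaining nonnegative dissipation leaves $g(t)\le C\int_0^t g(s)\,\d s+M_t$ with $C=C(L,e^0,\dots,e^N)$. Localizing by stopping times $\tau_n\uparrow T$ along which $M^{\tau_n}$ is a genuine martingale and $g(\cdot\wedge\tau_n)$ is integrable, taking expectations and invoking Gronwall's lemma gives $\E\bigl(g(t\wedge\tau_n)\bigr)\le\E\bigl(g(0)\bigr)e^{Ct}=0$; letting $n\to\infty$ forces $g\equiv0$, i.e. $X^1(t,\cdot)=X^2(t,\cdot)$ in $\shs'(\R)$ for all $t$, hence $\d t\,\d\xi$-a.e.

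The step I expect to be the main obstacle is the It\^o-in-$H^{-1}$ formula: the solutions are only $\shs'(\R)$-valued paths carrying nothing more than a spacetime-$L^2$ bound, with no a priori continuity or uniform integrability in $H^{-1}$, so one must first mollify $Z$ and $\psi(X^i)$ in the space variable, apply It\^o to the regularized equation, and pass to the limit using the $L^2$-estimates, all intertwined with the localization above (and with the preliminary remark that such a solution in fact has $H^{-1}$-continuous paths). Once this standard but somewhat delicate machinery is set up --- exactly as in the proof of Lemma \ref{L7.4} and in \cite{BRR4}, Theorem~4.2 --- the algebraic identity and the Gronwall conclusion are routine.
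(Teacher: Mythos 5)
Your proof is correct in approach and is essentially the argument the paper relies on: the paper gives no proof of Theorem \ref{TB1} at all, deferring entirely to \cite{BRR4} (Theorem 4.2), and your $H^{-1}$-energy sketch --- subtracting the weak formulations, applying It\^o in $H^{-1}$, using the Lipschitz-plus-monotone inequality $\norma{\psi(X^1)-\psi(X^2)}_{L^2}^2\le L\,\langle X^1-X^2,\psi(X^1)-\psi(X^2)\rangle_{L^2}$ to absorb the sign-indefinite term, controlling the noise terms via the $H^{-1}$-multiplier property of Lemma \ref{LMultipl}, and closing with localization and Gronwall --- is exactly the randomized monotonicity method that reference carries out and that the paper itself runs in detail for the analogous quantity $\norma{X^\kappa-X}^2_{H^{-1}}$ in the proof of Lemma \ref{L7.4}. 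Like the paper, you defer the one genuinely delicate step (justifying the It\^o formula in $H^{-1}$ for solutions with only the regularity \eqref{eFB1}, via mollification and the a posteriori $H^{-1}$-continuity of paths) to the cited reference, but you correctly isolate it as the crux, so the proposal matches the intended proof.
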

\begin{rem}\label{remB2bis} 
\begin{enumerate}
\item  
Suppose moreover that
$ e^i, 0 \le i \le N,  \ {\rm belong \ to} \ H^1.$
 If $x_0 \in L^2$ or $\psi$   is non-degenerate
then  Theorem 3.4 of \cite{BRR3} provides an existence theorem for \eqref{PME}.
It states the existence of a random field $X$
such that 
\begin{equation}\label{eFB1bis}
E \left(\int_{[0,T]\times\R} X^2(s,\xi)\d s\d \xi\right) <\9,
\end{equation}
such that 
$t \mapsto X(t,\cdot)$ belongs to  $C([0,T]; H^{-1}(\R))$ and
 $t \mapsto  \int_0^t \psi(X(s,\cdot)) \d s \in  C([0,T]; H^1(\R))$ a.s.
\item So, under the assumption of item 1., the solution $X$
is unique among those fulfilling \eqref{eFB1}.
\item $X$ of point ii) fulfills the equation, for almost all $\o$,
in $H^{-1}$
 \begin{equation}\label{equater}
X(t,\;\cdot\;)=x_0+ \int\limits^t_0 \Delta(\psi (X(s,\;\cdot\;)))\d s 
 + \int\limits^t_0 \mu ( \d s , \cdot) X(s,\cdot), \quad t\in [0,T].
\end{equation}
\end{enumerate}


\end{rem}

The proof of Theorem \ref{TB1} is a consequence of the result stated 
 in Theorem \ref{TB1} of \cite{BRR4},
see also \cite{Bar-Roc-Rus-2200}.

\end{appendix}
\bigskip

{\bf ACKNOWLEDGMENTS} 
\noindent

The authors are grateful to the associated Editor and the two 
Referees for their careful reading of the first versions of
the manuscript which has allowed them to considerably improve 
the quality of the paper.
Financial support through the SFB 701 at Bielefeld University and
NSF-Grant 0606615
is gratefully acknowledged. 
The third named author  benefited partially from the support of the 
``FMJH Program Gaspard Monge in optimization and operation research'' 
(Project 2014-1607H) and from 
 the ANR Project MASTERIE 2010 BLAN 0121 01.
Part of this work was written during a stay of the first
and third named authors at
the Bernoulli Center (EPFL Lausanne).

\addcontentsline{toc}{section}{References}
\bibliographystyle{plain}
\bibliography{BRR_Bibliography}

\end{document}